\theoremstyle{plain}
\newtheorem{theorem}{Theorem}[section]
\newtheorem{proposition}[theorem]{Proposition}
\newtheorem{lemma}[theorem]{Lemma}
\theoremstyle{definition}
\newtheorem{example}[theorem]{Example}
\crefname{ex}{Example}{Examples}
\crefname{thm}{Theorem}{Theorems} 
\crefname{lem}{Lemma}{Lemmas}
\crefname{prop}{Proposition}{Propositions}
\crefname{cor}{Corollary}{Corollaries} 
\crefname{con}{Conjecture}{Conjectures} 
\crefname{def}{Definition}{Definitions}
\crefname{rmk}{Remark}{Remarks}
\numberwithin{equation}{section}
\renewcommand{\AA}{\mathbb{A}}
\newcommand{\PP}{\mathbb{P}}
\newcommand{\CC}{\mathbb{C}}
\newcommand{\RR}{\mathbb{R}}
\newcommand{\NN}{\mathbb{N}}
\newcommand{\ZZ}{\mathbb{Z}}
\newcommand{\IG}{\mathrm{IG}}
\newcommand{\gen}{\mathrm{gen}}
\newcommand{\I}{\mathcal{I}} %ideal
\newcommand{\V}{\mathcal{V}} %variety
\newcommand{\M}{\mathcal{M}} %moment variety
\newcommand{\calS}{\mathcal{S}} %blown-up surface
\newcommand{\calD}{\mathcal{D}}
\newcommand{\calA}{\mathcal{A}}
\newcommand{\calH}{\mathcal{H}}
\newcommand{\calB}{\mathcal{B}}
\DeclareMathOperator{\Sec}{Sec} %secant variety
\newcommand{\E}{\mathcal{E}} %exceptional divisor 
\DeclareMathOperator{\Pic}{Pic} %Picard group
\newcommand{\from}{\colon}
\setlist[enumerate,itemize]{leftmargin=3em}
\title{Moment varieties of the inverse~Gaussian and gamma~distributions are nondefective}
\author{Oskar Henriksson, Kristian Ranestad, Lisa Seccia, Teresa Yu}
\begin{document}

\begin{abstract}
    We show that the parameters of a $k$-mixture of inverse Gaussian or gamma distributions are algebraically identifiable from the first $3k-1$ moments, and rationally identifiable from the first $3k+2$ moments. Our proofs are based on Terracini's classification of defective surfaces, careful analysis of the intersection theory of moment varieties, and a recent result on sufficient conditions for rational identifiability of secant varieties by Massarenti--Mella.
\end{abstract}

\maketitle

\vspace{-0.5em}

\section{Introduction}
Secant varieties have played a central role in algebraic geometry since the turn of the 20th century, and have recently also been used to shed light on problems in many areas of applied mathematics, including tensor decomposition \cite{BCCGO18}, rigidity theory \cite{CMNT23}, and optimization \cite{OT24}. 

In this work, we use secant varieties to study the \textit{method of moments} for parameter estimation in statistics. Given a stochastic variable $X$ of a distribution depending on  parameters $\theta=(\theta_1,\ldots,\theta_n)$, the moments $m_r(\theta)=\mathbb{E}[X^r]$ for $r\in\NN$ are often rational functions of the parameters (see, e.g., \cite{BS15}). In its simplest form, the goal of the method of moments is to estimate the parameters by solving the system 
\begin{equation}\label{eq:moment_equations}
m_r(\theta)=\widehat{m}_r\quad\text{for $r=1,\ldots,d$},
\end{equation}
for some number $d$ of sample moments $\widehat{m}_1,\ldots,\widehat{m}_d$ computed from a sample of $X$. Geometrically, this corresponds to  studying the fibers of the rational map
\begin{equation}\label{eq:moment_parametrization}
\CC^n\dashrightarrow\PP^d,\quad \theta\mapsto [1:m_1(\theta):\cdots:m_d(\theta)],
\end{equation}
where the Zariski closure $\M_d$ of the image is the $d$th \emph{moment variety} of the distribution. 

Of fundamental statistical importance is the following identifiability problem: How many moments $d$ do we expect to need to include in system \eqref{eq:moment_equations} for it to have finitely many or even a unique solution?  We say that we have \emph{algebraic identifiability} if the fibers of $\CC^n\dashrightarrow\M_d$ are finite over generic points of $\M_d$ (which is equivalent to $\dim(\M_d)=n$), and we say that we have \emph{rational identifiability} if fibers over generic points of $\M_d$ contain a unique point.

These types of identifiability questions have been studied for many distributions using algebraic techniques. The most well-understood examples come from Gaussian distributions \cite{AFS16,ARS18,AAR21,LAR25,Blo25, BCMO23}, and other examples include uniform distributions on polytopes \cite{KSS20}, Dirac and Pareto distributions \cite{GKW20}, as well as inverse Gaussian distributions and gamma distributions \cite{HSY24}.

The connection between moment varieties and secant varieties is the following. Consider a given distribution with $n$ parameters and $d$th moment variety $\M_d$. Then one can consider a $k$-mixture of this distribution; it has $kn+k-1$ parameters, and its $d$th moment variety is the secant variety $\Sec_k(\M_d)$. 
This perspective has previously been exploited in the Gaussian case to study the method of moments for their mixtures, starting with the seminal paper \cite{ARS18}. In this paper, we extend and generalize some of those techniques to the gamma and inverse Gaussian distributions, whose moment varieties have more complicated structures \cite{HSY24}.

Our first main result concerns algebraic identifiability of $k$-mixtures. We have algebraic identifiability from the first $d$ moments if and only if 
\[\dim(\Sec_k(\M_d))=kn+k-1.\]
We approach this through the notion of \emph{non-defectivity} in the theory of secant varieties, where a variety $X\subseteq\PP^d$ is said to be \emph{k-nondefective} if $\dim(\Sec_k(X))=\min\{k\dim(X)+k-1,d\}$, and \emph{$k$-defective} otherwise.

\begin{theorem}\label{thm:intro_algebraic_identifiability}
    Let $\M_d$ be the $d$th moment variety for the inverse Gaussian or gamma distribution. Then $\M_d$ is $k$-nondefective for each $d\geq 2$ and $k\geq 2$, in the sense that $$\dim(\Sec_k(\M_d))=\min\{3k-1,d\}.$$
    In particular, we have algebraic identifiability from the first $3k-1$ moments for $k$-mixtures of the inverse Gaussian distribution and $k$-mixtures of the gamma distribution.
\end{theorem}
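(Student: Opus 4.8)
Both the inverse Gaussian and the gamma distribution have $n=2$ parameters, so $\M_d$ is a surface and $3k-1 = k\cdot 2 + (k-1)$ is exactly the expected dimension of $\Sec_k(\M_d)$; the assertion $\dim(\Sec_k(\M_d))=\min\{3k-1,d\}$ is thus precisely the statement that the surface $\M_d$ is $k$-nondefective. The plan is to first collapse the claim to a single critical value of $d$ for each $k$, then apply Terracini's classification of defective surfaces, and finally rule out each defective type using the detailed geometry of $\M_d$.

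\emph{Reduction.} I would first show that it suffices to prove $\Sec_k(\M_{3k-1})=\PP^{3k-1}$ for every $k\geq 2$. Forgetting the top coordinates defines a linear projection $\M_d\dashrightarrow\M_{d'}$ for $d>d'$ that carries $\Sec_k(\M_d)$ dominantly onto $\Sec_k(\M_{d'})$, so $\dim(\Sec_k(\M_d))\geq\dim(\Sec_k(\M_{d'}))$. Granting $\Sec_k(\M_{3k-1})=\PP^{3k-1}$: for $d\geq 3k-1$ the projection onto $\PP^{3k-1}$ is dominant, forcing $\dim(\Sec_k(\M_d))\geq 3k-1$, which with the trivial bound $\dim(\Sec_k(\M_d))\leq k\dim(\M_d)+k-1=3k-1$ gives equality; and for $2\leq d<3k-1$ the projection of $\PP^{3k-1}=\Sec_k(\M_{3k-1})$ into $\PP^d$ is surjective, so $\Sec_k(\M_d)=\PP^d$. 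This isolates the single statement that $k$ general tangent planes of $\M_{3k-1}$ fill $\PP^{3k-1}$.

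By Terracini's lemma, $\dim(\Sec_k(\M_{3k-1}))$ equals the dimension of the span of the embedded tangent planes $T_{p_1}\M,\ldots,T_{p_k}\M$ at $k$ general points, and since each is a $2$-plane, these span $\PP^{3k-1}$ exactly when they are in general position; failure of this is precisely $k$-defectivity. I would then invoke Terracini's classification, by which a nondegenerate irreducible surface is $k$-defective only if it is a cone, (a projection of) the Veronese surface $v_2(\PP^2)\subseteq\PP^5$, or swept out by a suitable one-parameter family of linear spaces (a rational normal scroll, or cone over a curve). The first two cases are quickly dispatched: $\M_{3k-1}$ is not a cone because the intersection $\bigcap_p T_p\M$ of its embedded tangent planes, which would be the vertex, is empty — checkable at a few explicit points via the moment parametrization, or read off from the description of $\M_d$ in \cite{HSY24}; and it is not a Veronese surface, which is relevant only in the base case $k=2$, $d=5$ (for $k\geq 3$ we have $d=3k-1>5$, and a nondegenerate surface in $\PP^d$ with $d>5$ cannot be a projection of $v_2(\PP^2)$), where it is settled by comparing degrees or by smoothness against $\deg(v_2(\PP^2))=4$.

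The main obstacle is the remaining case: ruling out, uniformly in $k$, that $\M_{3k-1}$ lies on a low-dimensional scroll or cone over a curve of the type that would force defectivity. This is where the intersection theory of moment varieties is essential. I would exploit the explicit product and recursive structure of the gamma and inverse Gaussian moments — already analyzed for the single-distribution case in \cite{HSY24} — to compute the relevant numerical invariants of $\M_d$ (its degree, the class of a general hyperplane section, and the way it would have to meet the linear spaces of any putative ruling) and show these are incompatible with containment in such a scroll. A secondary technical point to monitor throughout is that $\M_d$ may be singular, so Terracini's classification must be applied in a form valid for possibly singular surfaces, or after passing to a resolution.
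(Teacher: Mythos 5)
Your overall frame --- Terracini's lemma, Terracini's classification of defective surfaces, and intersection theory to kill the remaining case --- is the paper's strategy, and your opening reduction to the single statement $\Sec_k(\M_{3k-1})=\PP^{3k-1}$ is a valid and slightly cleaner organization than the paper's (which instead handles all $d$ and uses the bound $k'\geq(d-2)/3$ from \cite{ARS18} to force $d\leq 3k+2$; note your reduction gives $d=3k-1\leq 3k+2$ for free). However, there are two genuine problems. First, you have misremembered the classification. Case (1) is not ``(a projection of) the Veronese surface $v_2(\PP^2)\subseteq\PP^5$,'' relevant only for $k=2$: for general $k$ it is the quadratic Veronese re-embedding of a rational normal surface of degree $k-1$ in $\PP^k$, and such surfaces exist for every $k$, so your dismissal ``for $k\geq 3$ we have $d>5$, hence not a Veronese'' does not address the actual alternative. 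The correct (and easy) way to exclude case (1) is that such a surface is either smooth or has a single singular point, whereas $\M_d$ has singular locus a line plus a point (inverse Gaussian) or two points (gamma). Relatedly, case (2) is not ``$X$ is a cone'' but ``$X$ is \emph{contained in} a cone over a curve with apex of dimension at most $k-2$,'' which is far weaker and cannot be excluded by showing $\bigcap_p T_p\M=\emptyset$.

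Second, and more seriously, the exclusion of case (2) --- which is the entire content of the theorem --- is present only as a statement of intent (``I would exploit the explicit product and recursive structure \dots and show these are incompatible''). The paper devotes two sections to exactly this step: one must resolve the indeterminacy of the parametrization $\PP^2\dashrightarrow\M_d$ by an explicit, distribution-specific tower of blowups (a chain of length $d+1$ over one base point for the inverse Gaussian, chains of length $d-i$ over each of $d$ base points for the gamma), compute the hyperplane class $H$ and the classes of various strict transforms in $\Pic(\calS_d)$, and then run a casework on the degree $a$ of the moving divisors $\calD_i$ furnished by defectivity, where the cases $a=1,2$ require divisibility arguments exploiting the disjointness of the monomial supports of the coordinate functions and the simplicity of the roots of the Bessel polynomials, plus a computational check for $d\leq 8$. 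None of this is derivable from the invariants you list (degree, hyperplane class) alone, so as written the proposal does not constitute a proof.
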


The defectivity of secant varieties is a well-studied yet difficult problem in algebraic geometry. In particular, the classification of defective Segre--Veronese varieties is a classical problem, with many applications to computer science and statistics, due to such secant varieties being closely related to symmetric tensor decomposition \cite{AH95,Lan12,ABGO24}.
Although moment varieties are typically not Segre--Veronese, they are often still determinantal varieties. For example, 
 \cite{AFS16} showed that the $d$th Gaussian moment variety in $\PP^d$ is given by the maximal minors of
\[H_1=\begin{pmatrix}
0 & x_0 & 2 x_1 & 3 x_2  & \cdots & (d-1) x_{d-2} \\
x_0 & x_1 & x_2 & x_3 & \cdots & x_{d-1} \\
x_1 & x_2 & x_3 & x_4 & \cdots & x_d
\end{pmatrix},\]
and it was shown in \cite{HSY24} that the $d$th moment variety for the inverse Gaussian in $\PP^d$ is defined by the maximal minors of
\[H_2=\begin{pmatrix}
    x_0^2 & x_0 & x_1 & x_2 & x_3 & \cdots & x_{d-2} \\
    0 & x_1 & 3x_2 & 5x_3 & 7x_4 & \cdots & (2d-3)x_{d-1}\\
    x_1^2 & x_2 & x_3 & x_4 & x_5 & \cdots & x_d
\end{pmatrix}.\]

Our strategy for proving \Cref{thm:intro_algebraic_identifiability} is similar to that used in \cite{ARS18} to study $k$-mixtures of univariate Gaussian distributions, in that it builds on Terracini's classification of defective surfaces, and uses intersection theory to rule out each of the possibilities for defectivity. However, having only linear entries in the matrix $H_1$ is essential in the argument of \cite{ARS18}, as their approach relies on the intersection theory of a smooth surface defined using a Hilbert--Burch matrix. As the matrix $H_2$ has nonlinear entries, we must develop a more general approach.

With \Cref{thm:intro_algebraic_identifiability} in place, we turn our attention to rational identifiability. For $k$-mixtures, rational identifiability up to permutation of the mixture components corresponds to proving that a generic point on $\Sec_k(\M_d)$ lies on a unique $k$-secant. In the theory of secant varieties, this property is commonly referred to as \emph{$k$-identifiability} of $\M_d$. Sufficient conditions for $k$-identifiability in terms of non-defectivity was developed in \cite{CM23}, and was recently sharpened to conditions that also involve the geometry of the Gauss map in \cite{MM24}. These conditions have been used to show identifiability results in, e.g., rigidity theory \cite{CMNT23}, Waring theory \cite{CP24}, and for various Gaussian mixture distributions \cite{LAR25,Blo25,BCMO23}. We use it to prove the following result, which is known to be true in the Gaussian case from \cite{LAR25}.

\begin{theorem}\label{thm:intro_rational_identifiability}
    Up to permuting the mixture components, we have rational identifiability from the first $3k+2$ moments for $k$-mixtures of the inverse Gaussian distribution and $k$-mixtures of the gamma distribution for any $k\geq 2$.
\end{theorem}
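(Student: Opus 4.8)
The plan is to deduce \Cref{thm:intro_rational_identifiability} from \Cref{thm:intro_algebraic_identifiability} by applying the sufficient conditions for $k$-identifiability from \cite{MM24} (as sharpened from \cite{CM23}). The key observation is that rational identifiability up to permutation is exactly the statement that a generic point of $\Sec_k(\M_d)$ lies on a unique $k$-secant $(k-1)$-plane, and this is precisely the notion of $k$-identifiability of $\M_d$ studied in those references. First I would recall the precise form of the Massarenti--Mella criterion: for an irreducible nondegenerate projective variety $X\subseteq\PP^d$, if $X$ is both $k$-nondefective and $(k+1)$-nondefective, and if a certain condition on the second osculating spaces or the Gauss map holds (ruling out the exceptional ``tangentially degenerate'' cases), then $X$ is $k$-identifiable. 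The strategy is to verify each hypothesis for $\M_d$ with the appropriate bound on $d$.

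The main input is the non-defectivity already established. With $n=2$, \Cref{thm:intro_algebraic_identifiability} gives $\dim(\Sec_k(\M_d))=\min\{3k-1,d\}$ for all $d\geq 2$ and $k\geq 2$. To invoke the identifiability criterion I need both $\M_d$ to be $k$-nondefective and $(k+1)$-nondefective simultaneously, which forces the ambient dimension to be large enough that both secant varieties have the expected dimension without filling the space: concretely, I would take $d\geq 3(k+1)-1=3k+2$, so that $3k-1<d$ and $3(k+1)-1\leq d$, guaranteeing $\Sec_k(\M_d)$ and $\Sec_{k+1}(\M_d)$ are both nondefective and properly contained in $\PP^d$. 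This explains the bound $3k+2$ appearing in the statement. Next I would confirm that $\M_d$ is irreducible and nondegenerate in $\PP^d$, which follows from its description as a $2$-dimensional variety parametrized by \eqref{eq:moment_parametrization} together with the determinantal presentation via $H_2$ recalled in the introduction.

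The hard part will be verifying the non-tangential-degeneracy hypothesis of \cite{MM24}, i.e.\ ruling out the pathological cases in which $\Sec_k(\M_d)$ is $k$-nondefective but still fails to be $k$-identifiable because the general tangent or contact locus behaves unexpectedly. For this I would analyze the tangent spaces to $\M_d$ directly: using the parametrization $\theta\mapsto[1:m_1(\theta):\cdots:m_d(\theta)]$ one computes the affine tangent space at a general point and checks that the second fundamental form is nondegenerate, or equivalently that $\M_d$ is not contained in the exceptional list of \cite{MM24} for which $k$-tangential degeneracy occurs (cones, varieties swept out by linear spaces of unexpectedly high dimension, and the classical Segre--Veronese exceptions). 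Since $\M_d$ is a nondegenerate surface whose generic tangent lines and osculating behavior are governed by the columns of $H_2$, I expect the nonlinear entries $x_0^2,x_1^2$ to play a role in showing the osculating spaces grow at the generic rate, ruling out the degenerate configurations.

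Finally I would assemble the pieces: having checked irreducibility, nondegeneracy, $k$- and $(k+1)$-non-defectivity (from \Cref{thm:intro_algebraic_identifiability} with $d\geq 3k+2$), and the non-tangential-degeneracy condition, the Massarenti--Mella theorem yields $k$-identifiability of $\M_d$, which by the dictionary between $k$-secants and mixture decompositions is exactly rational identifiability up to permutation of the $k$ mixture components from the first $3k+2$ moments. The identical argument applies verbatim to both the inverse Gaussian and gamma moment varieties, since \Cref{thm:intro_algebraic_identifiability} treats them uniformly and their determinantal structures are analogous.
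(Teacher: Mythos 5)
Your overall strategy matches the paper's: both invoke the Massarenti--Mella criterion \cite[Theorem~1.5]{MM24}, check the dimension condition $(k+1)\cdot 2+k=3k+2\leq d$, and feed in the non-defectivity results to handle the secant-dimension hypotheses. (A minor quibble: the criterion as used in the paper requires only $(k+1)$-non-defectivity together with the dimension bound, not $k$- and $(k+1)$-non-defectivity separately, but your reading is harmless.)

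The genuine gap is in your treatment of the third hypothesis, the nondegeneracy of the Gauss map. You correctly flag this as ``the hard part,'' but then only say that you would ``compute the second fundamental form'' and that you ``expect'' the nonlinear entries of $H_2$ to show the osculating spaces grow at the generic rate --- this is not an argument, and a direct computation of osculating behavior from the determinantal presentation is not obviously tractable. The paper instead uses the classical structure theorem for surfaces with degenerate Gauss map (e.g.\ \cite[Theorem~4.3.6]{IL03}): such a surface is either a cone over a curve or the tangential variety of a curve. The cone case is already excluded by the non-defectivity proofs (this is exactly case (2) of Terracini's classification, which Theorems \ref{thm:IG_nondef} and \ref{thm:gamma_nondef} rule out). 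The tangential-variety case is excluded via the singular locus: a nonplanar curve $C$ lies in the singular locus of $\tau(C)$ (Piene), but the singular locus of $\M_d^\Gamma$ is zero-dimensional and that of $\M_d^{\IG}$ is a line and a point, neither of which can contain a curve $C$ with $\tau(C)=\M_d$. Without some argument of this kind --- or an actual computation of the second fundamental form --- condition (3) remains unverified and the proof is incomplete.
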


\subsection*{Future research directions}
Our result on algebraic identifiability is optimal in the sense that it is impossible to have algebraic identifiability from fewer than $3k-1$ moments for dimension reasons. However, for rational identifiability it is still an open question whether it is possible to get rational identifiability from fewer than $3k+2$ moments. Based on numerical experiments, it is conjectured in \cite{LAR25,HSY24} that $3k$ moments is enough in the Gaussian, gamma and inverse Gaussian cases.
Another major challenge for the future would be to find effective techniques for proving non-defectivity for determinantal varieties of higher dimensions, where there is no obvious known analog of Terracini's classification.

\subsection*{Organization of the paper}
The remainder of the paper is organized as follows. 
In \Cref{sec:prelim}, we provide background on the intersection theory needed for our results, and outline our proof strategy for \Cref{thm:intro_algebraic_identifiability}. In \Cref{sec:inverse_gaussian,sec:gamma}, we carry out this strategy for the inverse Gaussian and gamma distributions respectively. In \Cref{sec:rational_identifiability}, we deduce \Cref{thm:intro_rational_identifiability} on rational identifiability of $k$-mixtures. 

\subsection*{Acknowledgements}
The authors are grateful to Elisenda Feliu, Francesco Galuppi, Alexandros Grosdos Koutsoumpelias, Ragni Piene, and Jose Israel Rodriguez for helpful discussions. The authors also thank Bernd Sturmfels and the Max Plank Institute for Mathematics in the Sciences for hosting OH, KR and TY, and providing a productive visit, during which part of this research was performed. Part of this research was also performed while OH, LS, and TY were visiting the Institute for Mathematical and Statistical Innovation (IMSI) for the Algebraic Statistics long program, which was supported by NSF grant DMS-1929348. 

LS was partially supported by SNSF grant TMPFP2-217223. TY was partially supported by NSF grant DGE-2241144.
OH was partially supported by the Novo Nordisk Foundation grant NNF20OC0065582, as well as the European Union under the Grant Agreement~no.~101044561, POSALG. Views and opinions expressed are those of the authors only and do not necessarily reflect those of the European Union or European Research Council (ERC). Neither the European Union nor ERC can be held responsible for them.

\section{Preliminaries}\label{sec:prelim}
\subsection{Moment varieties of mixtures}

Real-world datasets often display multimodal behavior or underlying heterogeneity, suggesting that they consist of different subpopulations or patterns. To effectively model such complexity, statisticians often employ \textit{mixtures of a distribution}, which are convex combinations of a given probability distribution.
 
In this paper, we focus on moment varieties of mixtures. 
Consider a univariate distribution with parameter space $\Theta\subseteq\RR^n$ and probability density function $p\from\RR\times\Theta\to\RR$, such that the first $d$ moments $m_1(\theta),\ldots,m_d(\theta)$ depend rationally on $\theta\in\Theta$ and parametrize the $d$th moment variety $\M_d\subseteq\PP^d$.
Then the \emph{$k$-mixture} of the distribution has parameter space $\Theta^k\times\Delta_{k-1}$ where $\Delta_{k-1}\subseteq\RR^k$ is the $(k-1)$-dimensional probability simplex, and its density function is given by 
\[f\from\RR\times\Theta^k\times\Delta_{k-1}\to\RR,\quad (x,\theta^{(1)},\ldots,\theta^{(k)},\alpha)\mapsto\sum_{i=1}^{k}\alpha_i p(x,\theta^{(i)}).\]
The $d$th moment variety of the $k$-mixture is the Zariski closure of the image of the rational map
\begin{equation}\label{eq:parametrization_of_mixture_moment_variety}
(\CC^n)^k\times\V\!\left(\textstyle\sum_{i=1}^k\alpha_i-1\right)\dashrightarrow\PP^d,\:\: (\theta^{(1)},\ldots,\theta^{(k)},\alpha)\mapsto \left[\textstyle\sum_{i=1}^{k}\alpha_im_r(\theta^{(i)})\right]_{r=0,\ldots,d},
\end{equation}
where $\V(\cdot)$ denotes the zero locus of a polynomial.
Equivalently, the moment variety of the $k$-mixture is the $k$th secant variety $\Sec_k(\M_d)$.  Proving that we have algebraic identifiability from the first $d$ moments corresponds to proving that 
\begin{equation}
\label{eq:dimension_that_gives_algebraic_identifiability_for_Md}\dim\!\left(\Sec_k (\M_d)\right) = kn+k-1,\end{equation}
while proving that we have rational identifiability up to the natural label-swapping action of the symmetric group $S_k$ on $(\CC^n)^k\times\V(\sum_{i=1}^k\alpha_i-1)$ corresponds to proving that a generic point in $\Sec_k(\M_d)$ lies on a unique $k$-secant. 

Previous work on identifiability for mixture distributions has focused on the Gaussian distribution. In \cite{ARS18}, the authors prove algebraic identifiability from the first $3k-1$ moments, and in \cite{LAR25}, the authors prove rational identifiability from the first $3k+2$ moments.

In what follows, we will study two other distributions that play an important role in statistics: the \emph{inverse Gaussian distribution}, and the \emph{gamma distribution}. Both these distributions are two-dimensional (in the sense that $n=2$), and we will use results from the theory of secant varieties of surfaces to prove algebraic identifiability from $3k-1$ moments, and rational identifiability from the $3k+2$ moments. 

\subsection{Proof strategy}\label{sec:strategy}
We now outline our strategy for proving \Cref{thm:intro_algebraic_identifiability} on algebraic identifiability.
Recall that for a variety $X\subseteq\PP^d$, it holds that 
$$\dim(\Sec_k(X))\leq\min\{k\dim(X)+(k-1),d\},$$
where the upper bound in the right-hand side is called the \emph{expected dimension} of $\Sec_k(X)$. The variety $X$ is said to be \emph{$k$-nondefective} if the bound is attained, and \emph{$k$-defective} otherwise. 

We will show \Cref{thm:intro_algebraic_identifiability} by proving that $\M_d$ is $k$-nondefective for all $k\geq 2$ and $d\geq 2$ via the following classical classification result due to Terracini.
Here, we use the formulation from \cite[Theorem~8]{ARS18} (see also \cite[Theorem~1.3]{CC02}). 

\begin{theorem}[Terracini's classification]\label{thm:terracini}
Let $X \subseteq \mathbb{P}^d$ be a reduced, irreducible, nondegenerate projective surface. If $X$ is $k$-defective, then $k \geq 2$ and one of the following two possibilities hold:
\begin{enumerate}
    \item[(1)] $X$ is the quadratic Veronese embedding of a rational normal surface in $\PP^k$ of degree $k-1$.
    \item[(2)] $X$ is contained in a cone over a curve, with apex a linear space of dimension at most $k-2$.
\end{enumerate}
Furthermore, for general points $p_1, \ldots , p_k$ on $X$ there is a hyperplane section tangent along
a curve $C$ that passes through these points. In case (1), the curve $C$ is irreducible; in case
(2), the curve $C$ decomposes into $k$ algebraically equivalent curves $C_1, \ldots, C_k$ with $p_i \in C_i$.
\end{theorem}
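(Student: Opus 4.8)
The plan is to prove the classification through Terracini's lemma, which converts $k$-defectivity into a statement about tangent hyperplanes, and then to analyze the resulting \emph{contact locus} by projective differential geometry.

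First I would apply Terracini's lemma: for general $p_1,\ldots,p_k\in X$ and a general $z$ in their linear span, $T_z\Sec_k(X)=\langle T_{p_1}X,\ldots,T_{p_k}X\rangle$. Since $X$ is a surface, each $T_{p_i}X$ is a projective $2$-plane, so $k$ of them span a space of dimension at most $3k-1$, with equality exactly when they are independent. Hence $X$ is $k$-defective precisely when these planes fail to span the expected $\min\{3k-1,d\}$-dimensional space, equivalently when there is a hyperplane $H\subset\PP^d$ tangent to $X$ at every $p_i$. For $k=1$ no such constraint is possible since $\Sec_1(X)=X$, so defectivity already forces $k\ge 2$.

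Next I would study the contact locus $\Sigma_H=\{p\in X : T_pX\subseteq H\}$, which contains all the $p_i$. The crucial structural step is to upgrade defectivity to \emph{weak defectivity}: I would show by a parameter count on the incidence variety of pairs $(H,(p_1,\ldots,p_k))$ with each $p_i\in\Sigma_H$ that, when the tangent planes are dependent, the fibers of the projection to $X^k$ cannot be finite, so $\Sigma_H$ is positive-dimensional. For a surface this means $\Sigma_H$ contains a curve $C$ through $p_1,\ldots,p_k$ along which $H$ is tangent to $X$.

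The hard part is the final dichotomy, governed by how the tangent plane varies along $C$, i.e.\ by the Gauss map $\gamma\from X\dashrightarrow(\PP^d)^\ast$ and the second fundamental form of $X$ restricted to $C$. If $C$ is irreducible, I would invoke the classification of surfaces carrying such an irreducible contact curve to identify $X$ with the quadratic Veronese image of a rational normal surface of degree $k-1$ in $\PP^k$, matching the degree of $C$ and its movement in a family to the defect; this is case (1). If $C$ is reducible, I would show its components form $k$ curves $C_1,\ldots,C_k$ with $p_i\in C_i$, which are algebraically equivalent because the $p_i$ enter symmetrically as general points of one family, and that the envelope of the one-parameter family of tangent hyperplanes places $X$ inside a cone over a curve with apex of dimension at most $k-2$; this is case (2). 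I expect this last step to be the principal obstacle: cleanly separating the irreducible and reducible cases and extracting the exact Veronese or cone structure is precisely where the full strength of Terracini's and Severi's projective differential geometry is required, and where the numerology relating the defect, the degree $k-1$, and the apex dimension $k-2$ must be verified with care.
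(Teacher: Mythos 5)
This statement is not proved in the paper at all: the authors import it verbatim as a classical result of Terracini, citing \cite[Theorem~8]{ARS18} and \cite[Theorem~1.3]{CC02}, so there is no in-paper argument to compare against. Judged on its own terms, your proposal correctly sets up the standard skeleton --- Terracini's lemma reduces $k$-defectivity to the existence of a hyperplane tangent at all the $p_i$, and one then passes to the contact locus --- but it has a genuine circularity at its core. In the final step you write that you would ``invoke the classification of surfaces carrying such an irreducible contact curve to identify $X$ with the quadratic Veronese image of a rational normal surface,'' and similarly that you would ``show'' the cone structure in the reducible case. That classification \emph{is} the theorem being proved; no mechanism is offered for actually deriving either conclusion. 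In case (1) the classical argument has to establish that $C$ moves in a family whose numerical invariants force $C^2=k-1$ and $\dim|C|=k$, that $|C|$ maps $X$ to a surface of minimal degree $k-1$ in $\PP^k$, and that the tangent hyperplane cuts the divisor $2C$, which is what produces the \emph{quadratic} Veronese re-embedding; in case (2) one must analyze the degenerate Gauss map along the components $C_i$ to exhibit the cone and bound the apex dimension by $k-2$. None of this is present, and it is exactly the content of the statement.

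A secondary, smaller gap: the passage from $k$-defectivity to the existence of a positive-dimensional contact curve through \emph{all} of $p_1,\ldots,p_k$ (i.e.\ $k$-weak defectivity in the sense of Chiantini--Ciliberto) is itself a nontrivial theorem, not a routine ``parameter count on the incidence variety.'' A naive dimension count of pairs $(H,(p_1,\ldots,p_k))$ only shows that tangent hyperplanes at the $p_i$ move in a larger-than-expected family; concluding that a \emph{general} such hyperplane is tangent along a curve containing each $p_i$, and that this curve breaks into $k$ algebraically equivalent pieces in case (2), requires the second Terracini lemma and a careful analysis of the family of contact loci. If you want a self-contained proof, the route in \cite{CC02} (or Terracini's original argument as reworked there) is the one to follow; as written, your proposal is a roadmap that defers the entire classification to the result it is meant to establish.
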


For both the inverse Gaussian and gamma distributions, we can rule out case (1) in the Terracini classification based on information about the singular loci. If case (1) were to hold, then $X$ would either be smooth, or singular at only one point, but neither of these hold for our moment varieties \cite{HSY24}. In order to rule out (2), the general strategy will be as follows.

The starting point is to turn the defining parametrization of $\M_d$ by the moments $m_i(\theta)$ into a rational parametrization $\phi\from\PP^2\dashrightarrow\M_d$ by homogeneous forms $f_i(\theta,s)$ with finitely many indeterminacy points $P_1,\ldots,P_r$, and then form a smooth resolution $\pi\from\calS_d\to\PP^2$ of the locus of indeterminacy, such that the parametrization lifts to a morphism $\tilde{\phi}\from\calS_d\to\M_d$ that makes the following diagram commute: 
\begin{equation}\label{eq:resolution_commutative_diagram}   
\begin{tikzcd}
	{\calS_d} \\
	{\PP^2} & {\M_d}\rlap{\,.}
	\arrow["\phi", dashed, from=2-1, to=2-2]
	\arrow["\pi"', from=1-1, to=2-1]
	\arrow["{\tilde{\phi}}", from=1-1, to=2-2]
\end{tikzcd}
\end{equation}
Once we have constructed $\calS_d$, we will use intersection-theoretic calculations in the  Picard group $\Pic(\calS_d)$ of Weil divisors modulo linear equivalence to rule out case (2).

A class that will play a particularly important role is the class $H$ of the strict transform of curves $\V(f_\gen)$, where $f_\gen$ is a generic linear combination of the coordinates of the parametrization $\phi=[f_0:\cdots:f_d]$. This class coincides with the class of the pullback of hyperplane sections of $\M_d \subseteq \PP^d$ via $\tilde{\phi}\from \calS_d\to\M_d$. For this class, case (2) in the Terracini classification has the following consequence.

\begin{lemma}[Lemma 10, \cite{ARS18}]\label[lem]{lem:divisors_Di_in_case_2}
Suppose that 
$\M_d$ satisfies condition (2) in \Cref{thm:terracini}. Then, for any $k$ general points $x_1,\ldots,x_k\in \calS_d$, there exist linearly equivalent effective divisors $\calD_1\ni x_1,\ldots, \calD_k\ni x_k$, and a hyperplane section of $\M_d$, with pullback $\calH$ to $\calS_d$, such that 
\begin{equation}\label{eq:definition_of_A}
\calA=\calH-2\calD_1-\cdots -2\calD_k
\end{equation}
is an effective divisor on $\calS_d$.
\end{lemma}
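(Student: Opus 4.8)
The plan is to transfer the ``moreover'' part of \Cref{thm:terracini}, case~(2), from $\M_d$ up to the smooth surface $\calS_d$ along the morphism $\tilde\phi$, and then to upgrade the \emph{algebraic} equivalence that Terracini supplies to the \emph{linear} equivalence demanded in the statement. The structural fact that makes the last step free is that $\calS_d$ is a rational surface, being a resolution of $\PP^2$: its irregularity vanishes, so $\Pic^0(\calS_d)=0$ and algebraic equivalence of divisors coincides with linear equivalence. This is precisely what bridges the gap between the algebraically equivalent curves $C_1,\dots,C_k$ produced by \Cref{thm:terracini} and the linearly equivalent divisors $\calD_1,\dots,\calD_k$ we must construct.

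Concretely, I would fix $k$ general points $x_1,\dots,x_k\in\calS_d$ and put $p_i=\tilde\phi(x_i)$. Since $\tilde\phi\from\calS_d\to\M_d$ is a dominant, generically finite morphism of surfaces, the $p_i$ are general points of $\M_d$, so case~(2) of \Cref{thm:terracini} applies and furnishes a hyperplane section $\V(\ell)\cap\M_d$ tangent to $\M_d$ along a curve $C=C_1+\cdots+C_k$, with the $C_i$ algebraically equivalent and $p_i\in C_i$. I would then take $\calH=\tilde\phi^*\bigl(\V(\ell)\cap\M_d\bigr)$, the pullback of this hyperplane section, and set $\calD_i=\tilde\phi^*C_i$. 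Since $p_i\in C_i$ and $\tilde\phi(x_i)=p_i$, the point $x_i$ lies in the support of $\calD_i$, giving $x_i\in\calD_i$.

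For effectivity of the divisor $\calA$ in \eqref{eq:definition_of_A}, I would work at a general point $q$ of each $C_i$. Such a $q$ is a smooth point of $\M_d$ over which $\tilde\phi$ is étale, and tangency means exactly that $\V(\ell)\cap\M_d$ contains $C_i$ with multiplicity at least $2$; étale pullback preserves this vanishing order, so $\calH\geq 2\calD_i$ as divisors on $\calS_d$. For general, distinct $x_i$ the curves $C_i$, hence the $\calD_i$, have no common component, so these $k$ inequalities combine into $\calH\geq 2(\calD_1+\cdots+\calD_k)$, i.e.\ $\calA$ is effective. For the equivalence, I would note that ``algebraically equivalent'' means the $C_i$ are members of one connected algebraic family of curves on $\M_d$; pulling this family back along $\tilde\phi$ presents the $\calD_i$ as members of an algebraic family on $\calS_d$, so they are algebraically equivalent there, and hence linearly equivalent by rationality of $\calS_d$.

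The step I expect to be the main obstacle is not the equivalence upgrade, which rationality dispatches cleanly, but the careful bookkeeping of divisorial pullback through the generically finite map $\tilde\phi$ onto the singular variety $\M_d$: one must check that $\tilde\phi^*$ is additive, preserves effectivity, and faithfully records the multiplicity-two behaviour coming from tangency, none of which is automatic when $\M_d$ is not smooth. The way around this is to localize every such claim at general points of the $C_i$, where $\M_d$ is smooth and $\tilde\phi$ is étale, so that local vanishing orders and the family structure transport without loss; the two global conclusions we need, $\calH\geq 2\sum_i\calD_i$ and $\calD_i\sim\calD_j$, are statements about divisor classes that are detected at the generic points of these curves, and so they follow.
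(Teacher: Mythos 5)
The paper gives no proof of this lemma: it is imported verbatim as \cite[Lemma~10]{ARS18}, so there is nothing internal to compare against. Your argument is correct and is essentially the standard proof from that source — pull back Terracini's tangent hyperplane section and the algebraically equivalent curves $C_i$ from case~(2) of \Cref{thm:terracini} to the smooth resolution $\calS_d$, check the multiplicity-two containment at general points where $\tilde{\phi}$ is an isomorphism onto the smooth locus, and upgrade algebraic to linear equivalence using that $\calS_d$ is a rational surface with $\Pic^0(\calS_d)=0$.
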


The strategy is to show that the existence of such divisors $\calA$ and $\calD_i$ gives rise to a contradiction. We will proceed by casework: the $\calD_i$ divisors are linearly equivalent and therefore their projections in $\PP^2$ all have the same degree $a\geq 1$. We will use intersection-theoretic calculations in $\Pic(\calS_d)$ to derive a contradiction for any choice of $a$. 

We note that the class of the $\calD_i$ on $\calS_d$ is \emph{moving}; for us, this means that for any fixed point on the surface, there is a linearly equivalent curve that goes through this point. We will often utilize this to reduce to smaller values of $a$, or to obtain contradictions for a given choice of $a$.

Another way to rule out large values of $a$ is to work with as large of a $k$ as possible for which $\M_d$ is $k$-defective. Indeed, \Cref{lem:divisors_Di_in_case_2} tells us that $\M_d$ being $k$-defective would imply that the value of $\deg(f_\gen)-2ka$ must be positive in order for the divisor $\calA$ to be effective. Since $\deg(f_\gen)$ is fixed, assuming $k$ to be large allows us to reduce to small values of $a$. An important tool to this end is the following lemma.

\begin{lemma}[{Corollary 7, \cite{ARS18}}]
\label[lem]{lem:lower_bound_on_d_for_defective_surfaces}
If a surface $X\subseteq\PP^d$ is $k$-defective for some $k\geq 2$, then there is a $k'\geq(d-2)/3$ such that $X$ is $k'$-defective.
\end{lemma}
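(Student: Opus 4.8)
The plan is to sidestep any intersection theory entirely and argue purely by monotonicity of secant dimensions, by passing to the \emph{largest} defective index rather than working with the given one. The only input I need is the elementary dimension bound for joins: since $X$ is a surface and $\Sec_{k+1}(X)$ is the join $\Sec_k(X)*X$, we have
\[
\dim \Sec_{k+1}(X)\le \dim \Sec_k(X)+\dim X+1=\dim \Sec_k(X)+3
\]
for every $k\geq 1$. This single inequality, combined with the definition of $k$-defectivity, will carry the whole argument.

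First I would record that the set of indices $k$ for which $X$ is $k$-defective is nonempty by hypothesis and finite: since $X$ is nondegenerate, $\Sec_k(X)=\PP^d$ for all sufficiently large $k$, and any such $k$ is nondefective because then $\dim \Sec_k(X)=d=\min\{3k-1,d\}$. Hence there is a largest defective index $k'$, and I claim $k'$ already satisfies the required bound. By maximality, $X$ is $(k'+1)$-nondefective, so $\dim \Sec_{k'+1}(X)=\min\{3k'+2,d\}$; and by $k'$-defectivity together with integrality of dimensions, $\dim \Sec_{k'}(X)\le \min\{3k'-1,d\}-1$.

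The key step is then to feed these two facts into the join inequality, yielding
\[
\min\{3k'+2,d\}=\dim \Sec_{k'+1}(X)\le \dim \Sec_{k'}(X)+3\le \min\{3k'-1,d\}+2,
\]
after which a one-line case analysis forces $d\leq 3k'+1$: if instead $d\geq 3k'+2$, the left-hand side equals $3k'+2$ while the right-hand side equals $(3k'-1)+2=3k'+1$, which is absurd. Thus $3k'\geq d-1$, so $k'\geq (d-1)/3\geq (d-2)/3$, as claimed.

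I do not expect a serious obstacle here; the only points requiring care are the bookkeeping with the two minima and confirming that a maximal defective index exists. The one conceptual subtlety worth double-checking is that a positive secant defect at $k'$ cannot be ``absorbed'' in a single step unless $\Sec_{k'+1}(X)$ becomes all of $\PP^d$ — this is precisely what the displayed inequality encodes, and it is what justifies replacing the given index $k$ by the top defective index $k'$.
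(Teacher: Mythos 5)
Your argument is correct. The paper does not actually prove this lemma---it imports it verbatim as \cite[Corollary~7]{ARS18}---and your proof is essentially the standard argument behind that citation: the single inequality $\dim\Sec_{k+1}(X)\le\dim\Sec_k(X)+3$ forces defectivity to persist until the secant variety fills $\PP^d$, which is exactly what you extract by passing to the maximal defective index $k'$ and playing the two minima against each other (your bookkeeping there is right, and in fact yields the slightly stronger bound $k'\ge (d-1)/3$). The only point worth flagging is that you invoke nondegeneracy (and, for the join-dimension bound and the eventual filling of $\PP^d$, implicitly irreducibility) of $X$ to guarantee that the set of defective indices is finite; the lemma as stated does not list these hypotheses, but they hold for the moment varieties in question, and in the degenerate case the conclusion is trivial anyway since $X$ is then $k$-defective for every sufficiently large $k$.
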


After reducing to small values of $a$, we are in some cases able to infer that $d\leq 8$, in which case non-defectivity can be checked computationally, as summarized by the following lemma. 

\begin{lemma}\label{lem:base_cases}
    For both the inverse Gaussian or gamma distribution, the $d$th moment variety $\M_d$ is $k$-nondefective for all $2\leq d\leq 8$ and all $k\geq 2$.
\end{lemma}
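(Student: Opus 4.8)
The plan is to reduce each of the finitely many cases $2 \le d \le 8$, $k \ge 2$ to a concrete computation, and to argue that only finitely many pairs $(d,k)$ need to be checked. First I would observe that for each fixed $d$, the inequality $\dim(\Sec_k(\M_d)) \le \min\{3k-1,d\}$ together with the fact that $\Sec_k(\M_d) \subseteq \PP^d$ means that once $3k-1 \ge d$, i.e. $k \ge (d+1)/3$, nondefectivity for the parameter $k$ amounts to showing $\Sec_k(\M_d) = \PP^d$ (it already fills the ambient space). Moreover, the chain $\Sec_1(\M_d) \subseteq \Sec_2(\M_d) \subseteq \cdots$ is increasing, so once $\Sec_{k_0}(\M_d) = \PP^d$ for some $k_0$, the same holds for all $k \ge k_0$. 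Hence for each $d \le 8$ there are only finitely many values of $k$ to inspect: those with $2 \le k \le \lceil (d+1)/3\rceil$, plus verifying that the first $k$ reaching the expected value $3k-1 \ge d$ makes the secant variety fill $\PP^d$.

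Next I would make the dimension computation effective via Terracini's lemma, which identifies $\dim(\Sec_k(\M_d))$ with the dimension of the linear span of the tangent spaces $T_{p_1}\M_d, \ldots, T_{p_k}\M_d$ at $k$ generic points $p_i$ on $\M_d$. Using the explicit rational parametrization \eqref{eq:moment_parametrization} (for the inverse Gaussian, the map $\theta \mapsto [1 : m_1(\theta) : \cdots : m_d(\theta)]$, and analogously for the gamma distribution), each tangent space at $\phi(\theta^{(i)})$ is spanned by the image point together with the two partial derivatives $\partial_{\theta_1}$ and $\partial_{\theta_2}$ of the parametrization. Stacking these $3k$ vectors into a matrix, I would compute its rank: nondefectivity is equivalent to this rank equalling $\min\{3k,d+1\}$ (the affine-cone dimension). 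Since the entries are rational functions in the parameters, the generic rank can be certified by evaluating at a single random choice of the $\theta^{(i)}$ (over $\QQ$ or a suitable finite field to avoid floating-point issues) and checking that the corresponding minor is nonzero; by semicontinuity of rank, nonvanishing at one point forces the generic rank to be at least as large, and the upper bound $\min\{3k,d+1\}$ gives the reverse inequality.

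The main obstacle, such as it is, is bookkeeping rather than conceptual: I must ensure that the chosen evaluation point is generic enough that the computed minor genuinely reflects the generic rank, and that the arithmetic is exact. To guarantee correctness I would carry out the rank computations symbolically, or over a large finite field with exact linear algebra, for each of the relevant pairs $(d,k)$ with $2 \le d \le 8$ and $2 \le k \le \lceil(d+1)/3\rceil$; this is a finite and explicitly enumerable list. For the boundary values of $k$ where the expected dimension meets or exceeds $d$, establishing $\Sec_k(\M_d) = \PP^d$ is the statement that the stacked tangent matrix attains full row rank $d+1$, which is the same rank check. Since all of these are finite-dimensional linear-algebra verifications over an explicitly given parametrization, they can be performed in a computer algebra system (e.g. \texttt{Macaulay2}), and the lemma follows by collecting the outcomes across the finitely many cases.
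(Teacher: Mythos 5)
Your proposal is correct and takes essentially the same route as the paper: the authors also reduce to the finitely many pairs $(d,k)$ with $k\leq\lceil (d+1)/3\rceil$ and certify the generic rank by evaluating the Jacobian of the mixture parametrization \eqref{eq:parametrization_of_mixture_moment_variety} at random integer parameter values using exact rational arithmetic, which is the Terracini/stacked-tangent-space computation you describe. The only cosmetic difference is that you phrase the rank check via Terracini's lemma while the paper phrases it directly as the Jacobian of the secant parametrization; these are the same matrix up to trivial column operations.
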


\begin{proof}
This was verified by evaluating the Jacobian of the parametrization \eqref{eq:parametrization_of_mixture_moment_variety} of $\Sec_k(\M_d)$ for $k\leq \lceil \frac{d+1}{3}\rceil$ at random integer values of the distribution parameters, and then computing the rank using exact rational arithmetic\footnote{Explicit choices of parameters and the associated Jacobians that certify these computations can be found in the repository \url{https://github.com/oskarhenriksson/moment-varieties-inverse-gaussian-and-gamma}.}. See also the discussion in  \cite[Section~5.1]{HSY24}.
\end{proof}

With algebraic identifiability in place, rational identifiability follows from an argument based on the conditions given in \cite{MM24}. This is the subject of \Cref{sec:rational_identifiability}.

\subsection{Construction and properties of the resolution}
We end the section by describing the general structure of the construction of the resolution $\calS_d$, and some key facts about the structure of $\Pic(\calS_d)$ that are common for both distributions. In \Cref{sec:inverse_gaussian,sec:gamma}, we will describe the particularities of this construction for each of the respective distributions. 

The resolution will consist of a sequence of blowups
\[\calS_d=\calS_{r,\ell_r}\to\cdots \to \calS_{i,j}\to\cdots\to\PP^2\]
where we, for the $i$th indeterminacy point $P_i$, construct a sequence of $\ell_i$ blowups in the following way (where $\ell_i$ depends on $i$ and the distribution at hand):
\begin{itemize}
    \item We start by blowing up the intermediate surface $\calS_{i-1,\ell_{i-1}}$ obtained in the previous step (or $\PP^2$, when $i=1$) at $P_{i,0}=P_i$. Let $\calS_{i,1}$ denote the resulting blowup, $\E_{i,1}$ the exceptional divisor, and $\phi_{i,1}\from\calS_{i,1}\dashrightarrow\M_d$ the lift of the previous map. This map $\phi_{i,1}$ turns out to have a unique indeterminacy point $P_{i,1}$ on $\E_{i,1}$.
    \item We blow up $\calS_{i,1}$ at $P_{i,1}$. Let $\calS_{i,2}$ be the resulting blowup,  $\E_{i,2}$ the exceptional divisor, and $\phi_{i,2}\from\calS_{1,2}\dashrightarrow\M_d$ the lift of $\phi_{i,1}$, with unique indeterminacy point $P_{i,2}$ on $\E_{i,2}$.
    \item Continue in this way, until we blow up $\calS_{i,\ell_i-1}$ at $P_{i,\ell_i-1}$, to obtain a surface $\calS_{i,\ell_i}$ with exceptional divisor $\E_{i,\ell_i}$, and a lift $\phi_{i,\ell_i}\from\calS_{i,\ell_i}\dashrightarrow\M_d$, such that the map $\phi_{i,\ell_i}$ has no further indeterminacy points on $\E_{i,\ell_i}$.
\end{itemize}
In the $j$th step for the $i$th indeterminacy point, we construct $\calS_{i,j}$ by picking an affine chart $\AA^2$ around $P_{i,j-1}$, which we blow up to $\calB_{i,j}$, and we then define $\calS_{i,j}$ as the Zariski closure of $\calB_{i,j}$ in $\calS_{i,j-1}\times\PP^1$, and $\pi_{i,j}\from\calS_{i,j}\to\calS_{i,j-1}$ as the extension of the blowup map $\calB_{i,j}\to\AA^2$, so that we get a commutative diagram
\[\begin{tikzcd}
	{\calB_{i,j}} & {\calS_{i,j}}\rlap{\quad$\subseteq\quad \calS_{i,j-1}\times\PP^1$ } \\
	{\AA^2} & {\calS_{i,j-1}} & {\M_d.}
	\arrow[hook, from=1-1, to=1-2]
	\arrow[from=1-1, to=2-1]
	\arrow["{\pi_{i,j}}"', from=1-2, to=2-2]
	\arrow["{\phi_{i,j}}", dashed, from=1-2, to=2-3]
	\arrow[hook, from=2-1, to=2-2]
	\arrow["\qquad{\phi_{i,j-1}}\qquad"', dashed, from=2-2, to=2-3]
\end{tikzcd}\]
We finally let $\calS_d=\calS_{r,\ell_r}$, and $\tilde{\phi}=\phi_{r,\ell_r}$, and take $\pi\from\calS_d\to\PP^2$ to be the composition of all maps $\pi_{i,j}$, giving the commutative diagram \eqref{eq:resolution_commutative_diagram}.
It follows from the general theory of blowups at points in the projective plane (see, e.g., \cite[§V.3]{Har77}) that $\Pic(\calS_d)$ is a free abelian group generated by the classes $E_{i,j}$ of the exceptional divisors $\E_{i,j}$ obtained in the construction of $\calS_d$, as well as the class $L$ of a line in $\PP^2$ pulled back to $\calS_d$. Furthermore, the intersection number pairing $\cdot\from \Pic(\calS_d)\times\Pic(\calS_d)\to\ZZ$ is diagonal with
\begin{equation}\label{eq:picard_grup_intersections}
    L^2=1,\qquad E_{i,j}^2=-1\:\:\text{for all}\:\:i=1,\ldots,r\:\:\text{and}\:\: j=1,\ldots,\ell_i.
\end{equation} 
It furthermore follows from \cite[Proposition~3.6]{Har77} that the class $\tilde{C}$ of the strict transform to $\calS_d$ of any irreducible curve $\mathcal{C}$ in $\PP^2$ can be expressed in terms of these generators as
\begin{equation}\label{eq:class_of_strict_transform}
\tilde{C}=\deg(C)L - \sum_{i=1}^r\sum_{j=1}^{\ell_i} m_{i,j}E_{i,j},
\end{equation}
where $m_{i,j}$ is the multiplicity at $P_{i,j-1}$ of the strict transform of $C$ on $\calS_{i,j-1}$.

\section{The inverse Gaussian distribution}\label{sec:inverse_gaussian}
The inverse Gaussian distribution has two parameters $\mu$ and $\lambda$, and its $d$th moment variety $\M_d^\IG\subseteq\PP^d$ is a surface that is the Zariski closure of the image of the map 
$$(\CC^*)\times\CC\to\PP^d,\quad (\mu,\lambda)\mapsto [m_0:\cdots:m_d],$$ 
where the moments are defined recursively as
\begin{equation}\label{eq:parametrization_inverse_gaussian}
m_0=1,\qquad m_1=\mu,\qquad m_i=\tfrac{2i-3}{\lambda}\mu^2m_{i-1} + \mu^2m_{i-2}\:\:\:\text{for}\:\:\: i\geq 2.
\end{equation}
Note that it follows directly from the recursive formula that for $i>0$,
$$m_i=\frac{\mu^i\,p_{i-1}(\lambda,\mu)}{\lambda^{i-1}},$$
where $p_{i}(\lambda,\mu)$ is the homogenization of the degree-$i$ Bessel polynomial (see, e.g., \cite[§1]{Gro51} for a definition), with $\mu$ as the homogenization variable. For proving non-defectivity, we will use the following basic  algebraic and geometric properties of $\M_d^\IG$ as a starting point.

\begin{theorem}[\S3, \cite{HSY24}]\label{thm:IG_results_from_previous_paper}
    Let $d \geq 3$. The homogeneous ideal $\I(\M_d^\IG)$ is generated by $\binom{d-1}{3}$ cubics and $\binom{d-1}{2}$ quartics, given by the maximal minors of the $(3\times d)$-matrix
    $$\begin{pmatrix}
     x_0^2 & x_0 & x_1 & x_2 & x_3&\cdots& x_{d-2}\\
     0 & x_1 & 3x_2 & 5x_3 & 7x_4 &\cdots& (2d-3)x_{d-1}\\
     x_1^2 & x_2 & x_3 &x_4  &x_5& \cdots& x_d
    \end{pmatrix}.$$
    Furthermore, $\mathcal{M}_{d}^\IG$ has degree $(d-1)^2$. The singular locus of $\M_d^\IG$ is given by the line $x_0=x_1=\ldots=x_{d-2}=0$ and the point $x_1=x_2= \ldots=x_d=0$ in $\PP^d$.
\end{theorem}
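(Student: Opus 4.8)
Let $H$ denote the $3\times d$ matrix in the statement and $I_3(H)$ its ideal of maximal minors. The plan is to prove the three assertions in turn, extracting the degree and the singular locus from the determinantal description once that is established. The easy direction is $I_3(H)\subseteq\I(\M_d^\IG)$, which I would prove by exhibiting an explicit left null vector of $H$ along the parametrized surface. Rewriting the recursion \eqref{eq:parametrization_inverse_gaussian} as $\lambda m_i-(2i-3)\mu^2m_{i-1}-\lambda\mu^2m_{i-2}=0$, one checks that the row combination with coefficients $(-\lambda\mu^2,\,-\mu^2,\,\lambda)$ annihilates every column of $H$ evaluated at $(m_0,\dots,m_d)$; the first column $(1,0,\mu^2)$ is killed precisely because $m_0=1$ and $m_1=\mu$. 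Hence $H$ has rank $\le 2$ on the image, so all $3\times 3$ minors vanish there. I would also record that the parametrization is birational onto its image, since $\mu=m_1/m_0$ and then $\lambda=\mu^3/(m_2-\mu^2)$ recover the parameters from the first three moments; thus $\M_d^\IG$ is a rational surface and the map has degree one.

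The main work is the reverse inclusion, namely that the minors already cut out $\M_d^\IG$ scheme-theoretically. Viewing $H$ as a map $\mathcal{O}(-2)\oplus\mathcal{O}(-1)^{\oplus(d-1)}\to\mathcal{O}^{\oplus 3}$ on $\PP^d$ (columns of degree $2$ and $1$, target untwisted), its rank-$\le 2$ locus has expected codimension $(d-2)(3-2)=d-2$. I would prove that this expected codimension is \emph{attained} — equivalently that $V(I_3(H))$ is a surface — which is the central obstacle, since for a matrix with these special, non-generic entries the codimension could a priori drop. The cleanest route is set-theoretic: at a point of $V(I_3(H))$ where $H$ has rank exactly two, its one-dimensional left kernel yields a three-term relation among the coordinates of the type forced by \eqref{eq:parametrization_inverse_gaussian}, from which one reconstructs a parameter value $(\mu,\lambda)$ (or a boundary limit), placing the point on $\M_d^\IG$; the smaller locus where the rank drops to $\le 1$ is handled separately. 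With the expected codimension in hand, the Eagon--Northcott complex resolves $R/I_3(H)$, so $I_3(H)$ is perfect and Cohen--Macaulay, and this complex is minimal because all entries of $H$ are forms of positive degree. Generic reducedness at a smooth point upgrades Cohen--Macaulayness to reducedness, and irreducibility (from the parametrization) to primeness, giving $I_3(H)=\I(\M_d^\IG)$. The minimal generators are then read off from the resolution: the $\binom{d-1}{3}$ minors avoiding the first column are cubics (three linear entries), while the $\binom{d-1}{2}$ minors using it are quartics, since expanding along the first column multiplies a degree-two entry by a $2\times 2$ minor of linear forms.

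For the degree I would apply the Thom--Porteous formula to the rank-drop locus of $\mathcal{O}(-2)\oplus\mathcal{O}(-1)^{\oplus(d-1)}\to\mathcal{O}^{\oplus 3}$ (equivalently, extract the leading term of the Hilbert series from the Eagon--Northcott resolution), which yields $\deg\M_d^\IG=(d-1)^2$. As a consistency check, for $d=3$ the ideal is a single quartic hypersurface, of degree $4=(3-1)^2$; and since the parametrization is birational, the degree also equals the self-intersection $H^2$ of the pulled-back hyperplane class on a resolution $\calS_d$ as in \eqref{eq:resolution_commutative_diagram}, providing an independent computation once the base points and their multiplicities are determined.

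Finally, for the singular locus I would apply the Jacobian criterion to the generators, using the resolution $\tilde\phi\from\calS_d\to\M_d^\IG$ to locate the degeneracies. The point $[1:0:\cdots:0]=\V(x_1,\dots,x_d)$ is the image of the $\mu\to 0$ boundary, where $H$ degenerates to rank one; and the line $\V(x_0,\dots,x_{d-2})$ arises from the $\lambda\to 0$ asymptotics (in which the top moments dominate), appearing as the image of the exceptional locus over the corresponding base points, along which $\tilde\phi$ fails to be an immersion. I emphasize that the singular locus is \emph{not} simply the rank-$\le 1$ locus of $H$: a direct check shows that generic points of $\V(x_0,\dots,x_{d-2})$ still have $\operatorname{rank}H=2$, so the line is a pinch/double curve whose singularity must be detected by the Jacobian rather than by a naive rank stratification. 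Verifying that the Jacobian of $I_3(H)$ has full rank $d-2$ at every other point then identifies $\Sing(\M_d^\IG)$ as exactly $\V(x_0,\dots,x_{d-2})\cup\{[1:0:\cdots:0]\}$.
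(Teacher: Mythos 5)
First, a structural point: the paper does not prove this statement at all --- it is quoted from \cite{HSY24}, \S 3 --- so there is no in-paper argument to compare against, and your proposal has to be judged on its own terms. The parts of your outline that can be checked do check out: the left null vector $(-\lambda\mu^2,-\mu^2,\lambda)$ really does annihilate every column of $H$ along the parametrization (including the first column $(x_0^2,0,x_1^2)^T$, using $m_0=1$, $m_1=\mu$); the inversion $\mu=m_1/m_0$, $\lambda=\mu^3/(m_2-\mu^2)$ is correct; the generator count $\binom{d-1}{3}+\binom{d-1}{2}=\binom{d}{3}$ with the stated degrees is right (and consistent with minimality of the Eagon--Northcott complex, since all entries of $H$ have positive degree); the degree is consistent both with the $d=3$ case and with the $H^2$ computation the paper performs later; and your observation that the singular line is \emph{not} the rank-$\le 1$ locus of $H$ (generic points of $\V(x_0,\ldots,x_{d-2})$ have rank exactly $2$) is a genuine and correct subtlety that a naive approach would miss.

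That said, the two steps carrying essentially all the content are asserted rather than proved. (i) The claim that $V(I_3(H))$ has codimension $d-2$ and coincides set-theoretically with the closure of the image of the parametrization: your sketch of reconstructing $(\mu,\lambda)$ from the left kernel works on the open stratum where the kernel vector $(a,b,c)$ has $c\neq 0$ and $x_0\neq 0$ (the first column forces $a/c=-x_1^2/x_0^2$, and the three-term recursion on the remaining columns then determines all coordinates), but the degenerate strata $c=0$, $x_0=0$, and the rank-$\le 1$ locus are exactly where extra components could hide, and they are dismissed in a single clause. Until this is done, Eagon--Northcott, primeness, and the Thom--Porteous degree computation all remain conditional. (ii) The singular locus: ``apply the Jacobian criterion'' is a plan, not a proof. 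Showing that the line $\V(x_0,\ldots,x_{d-2})$ \emph{is} singular requires an actual degeneration argument (e.g., that $\tilde{\phi}$ fails to be an immersion, or is not injective, along the exceptional curves mapping to it), and showing that \emph{nothing else} is singular requires verifying that the Jacobian of all $\binom{d}{3}$ minors has rank $d-2$ off the claimed locus --- a nontrivial verification for a non-complete-intersection ideal. These are precisely the computations that \cite{HSY24} supplies and that your outline defers.
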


Our main goal in this section is to rule out case (2) in the Terracini classification, using the strategy outlined in \Cref{sec:strategy}. 
We begin by homogenizing and clearing denominators in the parametrization \eqref{eq:parametrization_inverse_gaussian}, which gives the following rational map,
\begin{align}\label{eq:equations-Fgen}
    \phi\from\PP^2\dashrightarrow\M_d^\IG,\quad [\lambda:\mu:s]\mapsto [f_0(\lambda,\mu,s):f_1(\lambda,\mu,s):\cdots:f_d(\lambda,\mu,s)],
\end{align}
where the coordinate functions are given by 
\begin{align*}
    f_0=\lambda^{d-1}s^d,\quad
    f_1=\lambda^{d-1}s^{d-1} \mu,\quad
    \ldots\quad 
    f_r=\lambda^{d-r}s^{d-r}\mu^r p_{r-1}(\lambda,\mu),\quad 
    \ldots\quad
    f_d=\mu^{d} p_{d-1}(\lambda,\mu).
\end{align*}
The locus of indeterminacy consists of the following points:
\begin{align*}
    P_1=[0\mathbin{:}0\mathbin{:}1],\quad P_2=[1\mathbin{:}0\mathbin{:}0],\quad P_3=[x_1\mathbin{:}1\mathbin{:}0],\quad \ldots\quad P_{d+1}=[x_{d-1}\mathbin{:}1\mathbin{:}0],
\end{align*}
where $x_1,\ldots,x_{d-1}$ are the distinct roots of the Bessel polynomial $p_{d-1}(\lambda,1)$. Note that it follows by \cite[Theorem~1]{Gro51} that all roots of each Bessel polynomial are simple. 
Let $f_\gen$ be a generic combination of the $d+1$ coordinate functions of $\phi$ in \eqref{eq:equations-Fgen}, and consider the curve $\V(f_\gen)\subseteq\PP^2$.  

As described in \Cref{sec:strategy}, we construct $\pi\from\calS_d\to\PP^2$ by a sequence of blowups. In the case for the inverse Gaussian distribution, we end up needing $\ell_1=d+1$, $\ell_2=\cdots=\ell_{d+1}=1$ blowup steps at the respective indeterminacy points. That this suffices to resolve the indeterminacy locus is proven by the following lemmas. The intersection-theoretic implications of these lemmas that we will use in the rest of this section are collected in \Cref{lem:strict_transforms_IG}. We also provide an example of some key steps of the construction in the $d=4$ case in \Cref{ex:ig_blowups_d_4}. 

For ease of notation, we will write $\E_i=\E_{i,1}$ for all $i\geq 2$ throughout this section. Note that since the indeterminacy points are isolated points in $\PP^2$ and the blowup at a point is birational outside that point, it suffices to independently describe the sequence of blowups over each $P_i$.

\begin{lemma}\label[lem]{lem:blowup_ig_P1}
Let $f_\gen=\sum_{k=0}^{d} a_k f_k$ be a linear combination of the coordinate functions of $\phi$ with general coefficients $(a_0,\ldots,a_d)\in\CC^{d+1}$. Then the following holds:
\begin{enumerate}
    \item $P_1$ is a zero of $f_\gen$ with multiplicity $d-1$. 
    \item The exceptional divisor $\mathcal{E}_{1,1}$ intersects the strict transform of $\V(f_\gen)$ at a single point $P_{1,1}$ with multiplicity $d-1$; this point corresponds to the tangent direction $\lambda=0$ at $P_1$.
    \item Fix $j\in\{2,\ldots,d\}$, and suppose we have already blown up at $P_{1,1},\ldots,P_{1,j-1}$ to obtain $\calS_{1,j}\to\PP^2$. Then the lift $\phi_{1,j}\from\calS_{1,j}\dashrightarrow\PP^d$ has a single new indeterminacy point $P_{1,j}$ on the exceptional divisor $\E_{1,j}$. It is a point on the strict transform of $\V(f_\gen)$ with multiplicity one.
    \item Consider the blowup $\calS_{1,d+1}$ at $P_{1,d}$. Then the lift $\phi_{1,d+1}\from\calS_{1,d+1}\dashrightarrow\PP^d$ has no new indeterminacy points on $\E_{1,d+1}$.
\end{enumerate}
\end{lemma}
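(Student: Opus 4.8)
The plan is to prove all four statements by direct local computation, blowing up one point at a time in the affine chart that contains the current base point, and exploiting two features of the parametrization: the homogeneity of the Bessel factors $p_{r-1}(\lambda,\mu)$, and the fact that they have nonzero constant term, $p_{r-1}(0,1)=1$, so that each such factor is a local unit along every exceptional divisor we create. Throughout I work in the chart $s=1$ with affine coordinates $(\lambda,\mu)$ centered at $P_1=(0,0)$, where the coordinate functions become $f_0=\lambda^{d-1}$, $f_1=\lambda^{d-1}\mu$, and $f_r=\lambda^{d-r}\mu^r p_{r-1}(\lambda,\mu)$ for $1\le r\le d$.

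For part (1), since $p_{r-1}$ is homogeneous of degree $r-1$, the order of vanishing of $f_r$ at the origin is $(d-r)+r+(r-1)=d+r-1\ge d$ for every $r\ge 1$, whereas $f_0$ vanishes to order exactly $d-1$. Hence for general coefficients the leading form of $f_\gen$ is $a_0\lambda^{d-1}$, giving $\mathrm{mult}_{P_1}\V(f_\gen)=d-1$. This leading form exhibits the tangent cone as the line $\lambda=0$ taken with multiplicity $d-1$, so after blowing up $P_1$ the strict transform meets $\E_{1,1}$ only at the point dual to this single tangent direction, namely $P_{1,1}$ corresponding to $\lambda=0$, and the local intersection number there equals $\mathrm{mult}_{P_1}\V(f_\gen)=d-1$. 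This is part (2).

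Part (3) is the inductive core. Blowing up $P_1$ in the chart $\lambda=u\mu$ and dividing out the common factor $\mu^{d-1}$, the reduced components become $u^{d-1}$ and $u^{d-r}\mu^r p_{r-1}(u,1)$; because $p_{r-1}(0,1)=1$, the $r=d$ component is a unit multiple of $\mu^d$, so the only common zero on $\E_{1,1}$ is $P_{1,1}=(0,0)$ (in the complementary chart $f_0$ reduces to the unit $1$, so no base points arise there). I would then prove by induction that for $2\le j\le d$, in suitable local coordinates $(w,\mu)$ at $P_{1,j-1}$ the reduced components are
$$ w^{\,d-j+1},\qquad w^{\,d-r}\mu\,p_{r-1}(w^{\,j-1}\mu,1)\quad(1\le r\le d), $$
and that a single blowup in the chart $(w,\mu)\mapsto(w,w\mu)$, after removing the common factor $w$, reproduces this shape with $j$ increased by one. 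Since each $p_{r-1}(w^{j-1}\mu,1)$ is a unit near the origin, the $r=d$ component is $w^{0}\mu\cdot(\text{unit})$, so the only common zero on $\E_{1,j}$ is again $P_{1,j}=(0,0)$ (the complementary chart is once more base-point-free, as $p_{d-1}(0,1)=1$ makes the $r=d$ component a unit there). Finally, the strict transform of $\V(f_\gen)$ carries along as $a_0 w^{d-j+1}+\sum_{r=1}^d a_r w^{d-r}\mu\,p_{r-1}(w^{j-1}\mu,1)$, whose $r=d$ term contributes a summand of order exactly $1$ while all other terms have order $\ge 1$; hence the multiplicity at $P_{1,j}$ is exactly $1$, as claimed.

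Part (4) then follows from the same normal form: at the $(d+1)$st blowup the exponent $d-j+1$ drops to $0$, so after removing the common factor $w$ the $f_0$-component becomes the unit $1$ and the lifted map is defined everywhere on $\E_{1,d+1}$ (the complementary chart again having a unit $r=d$ component), terminating the resolution over $P_1$. I expect the main obstacle to be the bookkeeping: correctly identifying the exceptional factor to divide out at each stage and verifying that the proposed normal form is genuinely reproduced by the blowup, including the complementary-chart check at every step so that no spurious base point is overlooked. The conceptual point that makes everything go through is the nonvanishing of the Bessel constant term $p_{r-1}(0,1)$, which simultaneously collapses the base locus to a single point at each stage and guarantees that the $r=d$ component governs the order-one behavior of the strict transform.
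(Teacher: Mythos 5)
Your proposal is correct and follows essentially the same route as the paper's proof: chart-by-chart computation of the successive blowups over $P_1$, dividing out the common exceptional factor at each stage and using that the Bessel factors $p_{r-1}$ are homogeneous with nonzero constant term (note $p_{r-1}(0,1)=(2r-3)!!$, not $1$, but only its nonvanishing is used) so that the $r=d$ component pins down a unique multiplicity-one base point on each new exceptional divisor. Your explicit inductive normal form $w^{d-j+1},\ w^{d-r}\mu\,p_{r-1}(w^{j-1}\mu,1)$ matches the paper's computations, and the complementary-chart checks you flag are exactly the ones the paper performs.
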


\begin{proof}
\textbf{Part (1):} Consider the affine chart $\PP^2\cap\{s=1\}\cong\AA^2_{(\lambda,\mu)}$, so $P_1$ is the origin in this chart and the coordinate functions of $\phi$ are given by
\[f_0=\lambda^{d-1},\qquad f_j=\lambda^{d-j}\mu^j p_{j-1}(\lambda,\mu),\quad \text{for }j=1,\ldots,d.\]
The lowest degree terms in $\lambda,\mu$ of these functions are $\lambda^{d-1},\lambda^{d-1}\mu,\ldots,\mu^d$ respectively, and so we see that $P_1$ is a zero of $f_\gen$ with multiplicity $d-1$.

\smallskip
\noindent
\textbf{Part (2):} We continue to work in the affine chart $\AA^2_{(\lambda,\mu)}$. The resulting blowup at $P_1$ is locally given by the coordinates
\[\calB_{1,1}=\{((\lambda,\mu),[v_1:w_1])\in\AA^2\times\PP^1:\lambda w_1=\mu v_1\},\]
with blowup morphism given by projection onto $\AA^2$. 
The exceptional divisor $\mathcal{E}_{1,1}\subseteq\calB_{1,1}$ is given by $\{(0,0)\}\times\PP^1$,
and the strict transform of the line $\V(\lambda)$ is given by $\{((0,\mu),[0:1]):\mu\in\CC\}$. Recall that $\phi_{1,1}\from\calS_{1,1}\dashrightarrow\PP^d$ denotes the lift of $\phi$.

Consider the affine chart $\calB_{1,1}\cap \{v_1=1\}\cong\AA^2_{(\lambda,w_1)}$. The restriction to $\AA^2_{(\lambda,w_1)}$ of $\phi_{1,1}$ after factoring out the common factor $\lambda^{d-1}$ is given by
\[(\lambda,w_1)\mapsto \left[\begin{array}{l}1:\lambda w_1:\cdots\\[0.2em]
\lambda w_1^j p_{j-1}(\lambda,\lambda w_1):\cdots\\[0.2em]
\lambda w_1^d p_{d-1}(\lambda,\lambda w_1)\end{array}\right].\]
These coordinate functions have no common zeros.

Now consider the affine chart $\calB_{1,1}\cap\{w_1=1\}\cong\AA^2_{(\mu,v_1)}$. The restriction of $\phi_{1,1}$ to this chart after factoring out the common factor $\mu^{d-1}$ is given by
\[(\mu,v_1)\mapsto \left[\begin{array}{l}v_1^{d-1}:\mu v_1^{d-1}:\cdots\\[0.2em]
\mu v_1^{d-j}p_{j-1}(\mu v_1,\mu):\cdots\\[0.2em]
\mu \,p_{d-1}(\mu v_1,\mu)\end{array}\right].\]
The coordinate functions have a common zero of multiplicity $d-1$ at $(\mu,v_1)=(0,0)$. In the coordinates of $\calB_{1,1}$, this is the point $P_{1,1}=((0,0),[0:1])$, which lies on both the exceptional divisor $\mathcal{E}_{1,1}$ and the strict transform of $\V(\lambda)$. In particular, we see that claim (2) holds.

\smallskip
\noindent
\textbf{Parts (3) and (4):} 
\textit{Blowup at $P_{1,1}$}: We first consider the blowup of $\calS_{1,1}$ at $P_{1,1}$, which has local coordinates
\[\calB_{1,2}=\{((\mu,v_1),[v_2:w_2])\in\AA^2\times\PP^1:\mu w_2=v_1v_2\}.\]

Consider the affine chart $\calB_{1,2}\cap\{w_2=1\}\cong \AA^2_{(v_1,v_2)}$. Then the restriction of the lift $\phi_{1,2}$ to this chart, after factoring out the common factor $v_1^{d-1}$, has the zeroth coordinate function given by $1$, and so the coordinate functions have no common zeros.

Now consider the affine chart $\calB_{1,2}\cap\{v_2=1\}\cong \AA^2_{(\mu,w_2)}$. The restriction of $\phi_{1,2}$ to this chart, after factoring out the common factor $\mu^{d-1}$, is given by
\[(\mu,w_2)\mapsto \left[\begin{array}{l}w_2^{d-1}:\mu w_2^{d-1}:\cdots\\[0.2em]
\mu w_2^{d-j}p_{j-1}(\mu w_2,1):\cdots\\[0.2em]
\mu \,p_{d-1}(\mu w_2,1)\end{array}\right].\]
To factor out $\mu^{d-1}$, we use the fact that $p_j(\mu^2w_2,\mu)=\mu\,p_j(\mu w_2,1)$ for each $j=1,\ldots,d-1$. Observe that there is a common zero $(\mu,w_2)=(0,0)$ of multiplicity one, and this corresponds to the indeterminacy point $P_{1,2}=((0,0),[1:0])\in\calB_{1,2}$ on $\mathcal{E}_{1,2}$.

\smallskip
\noindent
\textit{Blowup at $P_{1,2}$}: Now we blow up $\calS_{1,2}$ at $P_{1,2}$ to obtain $\calS_{1,3}\to\PP^2$, with local coordinates
\[\calB_{1,3}=\{((\mu,w_2),[v_3:w_3]:\mu w_3=w_2 v_3\},\]
and lift $\phi_{1,3}$ of $\phi$.
 
The restriction of $\phi_{1,3}$ to the affine chart $\calB_{1,3}\cap \{v_3=1\}\cong\AA^2_{(\mu,w_3)}$, after factoring out the common factor $\mu$, has zeroth coordinate function given by $\mu^{d-2} w_3^{d-1}$, and $d$th coordinate function $p_{d-1}(\mu^2 w_3,1)$, which has a degree zero term. Thus, there are no common zeros in this chart.

Now consider the affine chart $\calB_{1,3}\cap \{w_3=1\}=\AA^2_{(w_2,v_3)}$. The restriction of $\phi_{1,3}$ to this chart, after factoring out the common factor $w_2$, has coordinate functions
\[(w_2,v_3)\mapsto \left[\begin{array}{l}w_2^{d-2}:v_3 w_2^{d-1}:\cdots\\[0.2em]
v_3 w_2^{d-k}p_{k-1}(w_2^2 v_3,1):\cdots\\[0.2em]
v_3 \,p_{d-1}(w_2^2 v_3,1)\end{array}\right].\]
There is a common zero $(w_2,v_3)=(0,0)$ of multiplicity one, and this corresponds to the indeterminacy point $P_{1,3}=((0,0),[0:1])\in\calB_{1,3}$ on the exceptional divisor $\E_{1,3}$.

\smallskip
\noindent
\textit{Blowup at $P_{1,j}$ for $j=3,\ldots,d$}: Now suppose by induction that we have already blown up at $P_{1,j-1}$ to obtain the surface $\calS_{1,j}\to\PP^2$, and that there is a indeterminacy point $P_{1,j}$ given by $(0,0)\in\AA^2_{(w_2,v_j)}\cong \calB_{1,j}\cap\{w_{j}=1\}$. Blowup at $P_{1,j}$ to obtain $\calS_{1,j+1}\to\PP^2$, with local coordinates
\[\calB_{1,j+1}=\{((w_2,v_{j}),[v_{j+1}:w_{j+1}])\in\AA^2\times\PP^1:w_2w_{j+1}=v_{j}v_{j+1}\}.\]
Let $\phi_{1,j+1}\from\calS_{1,j+1}\dashrightarrow\PP^d$ denote the lift of $\phi$.

Consider the affine chart $\calB_{1,j+1}\cap\{v_{j+1}=1\}\cong\AA^2_{(w_2,w_{j+1})}$. The restriction of $\phi_{1,j+1}$ to this chart, after factoring out the common factor $w_2$, has coordinate functions
\[(w_2,w_{j+1})\mapsto \left[\begin{array}{l}w_2^{d-j}:w_{j+1} w_2^{d-1}:\cdots\\[0.2em]
 w_{j+1}w_2^{d-k}p_{k-1}(w_2^{j}w_{j+1},1):\cdots\\[0.2em]
w_{j+1} \,p_{d-1}(w_2^{j}w_{j+1},1)\end{array}\right].\]
If $j=d$, then there are no common zeros, as the zeroth coordinate function is $1$. If $j=3,\ldots,d-1$, then there is a common zero $(w_2,w_{j+1})=(0,0)$ of multiplicity one, and this corresponds to the indeterminacy point $P_{1,j+1}=((0,0),[0:1])\in\calB_{1,j+1}$ on the exceptional divisor $\E_{1,j+1}$ of the blowup.

In the other affine chart $\calB_{1,j+1}\cap\{w_{j+1}=1\}\cong\AA^2_{(v_{j},v_{j+1})}$, the restriction of $\phi_{1,j+1}$ after factoring out the common factor $v_{j}$ does not have any common zeros. As in the case of blowing up at $P_{1,2}$, the zeroth coordinate function is $v_{j}^{d-j} v_{j+1}^{d-j-1}$, while the $d$th coordinate function is $p_{d-1}(v_{j}^{j} v_{j+1}^{j-1},1)$, which has a degree zero term.

This completes the proof of parts (3) and (4).
\end{proof}

\begin{lemma}\label[lem]{lem:blowup_ig_other_base_points}
    Let $f_\gen=\sum_{j=0}^{d} a_j f_j$ be a linear combination of the coordinate functions $f_j$ for $\phi$ with general coefficients $(a_0,\ldots,a_d)\in\CC^{d+1}$.
    \begin{enumerate}
        \item $P_2$ is a zero of $f_\gen$ with multiplicity $d$. Furthermore, the lift $\phi_2$ of $\phi$ to the blowup $\calS_{2,1}\to\PP^2$ at $P_2$ has no new indeterminacy points on the exceptional divisor $\mathcal{E}_2$.
        \item For each $i=3,\ldots,d+1$, the point $P_i\in\PP^2$ is a zero of $f_\gen$ with multiplicity $1$. Furthermore, the lift $\phi_{i}$ of $\phi$ to the blowup $\calS_{i,1}\to\PP^2$ at $P_i$ has no new indeterminacy points on the exceptional divisor $\E_i$.
    \end{enumerate}
\end{lemma}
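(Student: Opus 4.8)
The plan is to treat both parts with the same two-step recipe used in the proof of \Cref{lem:blowup_ig_P1}: first pass to a convenient affine chart of $\PP^2$ and read off the lowest-order terms of the coordinate functions to compute the multiplicity of $f_\gen$ at the base point, and then perform a single blowup and inspect both standard charts of the exceptional $\PP^1$ to confirm that the lifted coordinate functions have no common zero along $\E$. The two arithmetic facts I will use repeatedly are the recurrence
\[p_n(\lambda,\mu)=(2n-1)\,\mu\,p_{n-1}(\lambda,\mu)+\lambda^2\,p_{n-2}(\lambda,\mu),\qquad p_0=1,\ \ p_1=\lambda+\mu,\]
obtained by rewriting \eqref{eq:parametrization_inverse_gaussian} in terms of $m_i=\mu^i p_{i-1}(\lambda,\mu)/\lambda^{i-1}$, and the fact that the roots of $p_{d-1}(\lambda,1)$ are simple \cite{Gro51} and nonzero (the latter because the recurrence gives $p_n(0,1)=(2n-1)!!\neq 0$).

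For part (1) I would work in the chart $\{\lambda=1\}\cong\AA^2_{(\mu,s)}$, in which $P_2$ is the origin and $f_0=s^d$, $f_r=s^{d-r}\mu^r p_{r-1}(1,\mu)$, and $f_d=\mu^d p_{d-1}(1,\mu)$. The recurrence shows that the coefficient of $\lambda^{n}$ in $p_n$ equals $1$, hence $p_{r-1}(1,0)=1$ and the lowest-order term of each $f_r$ is $s^{d-r}\mu^r$, of total degree $d$; thus $f_\gen$ vanishes to order exactly $d$ at $P_2$, with reduced tangent cone a generic binary $d$-form. Blowing up and substituting $s=\mu w$, one factors out $\mu^d$ and finds that the last coordinate restricts to $p_{d-1}(1,0)=1$ along $\E_2=\{\mu=0\}$; in the chart $\mu=sv$ one factors out $s^d$ and the zeroth coordinate becomes $1$. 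In both charts there is no common zero, so $\phi_2$ has no indeterminacy on $\E_2$.

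For part (2) I would fix a root $x=x_{i-2}$ of $p_{d-1}(\lambda,1)$, work in the chart $\{\mu=1\}\cong\AA^2_{(\lambda,s)}$, and recenter by $u=\lambda-x$. Since $x\neq0$, the functions $f_0=(x+u)^{d-1}s^d$ and $f_r=(x+u)^{d-r}s^{d-r}p_{r-1}(x+u,1)$ for $r\le d-2$ all carry a factor $s^{d-r}$ with $d-r\ge 2$ and so vanish to order $\ge 2$; the function $f_{d-1}=(x+u)\,s\,p_{d-2}(x+u,1)$ vanishes to order $1$ in $s$, and $f_d=p_{d-1}(x+u,1)$ vanishes to order $1$ in $u$ because $x$ is a simple root. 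Hence $f_\gen$ has multiplicity exactly $1$. After one blowup I would again examine the two charts. Substituting $s=uw$ and factoring out $u$, the last coordinate restricts to $p_{d-1}'(x,1)\neq 0$ along $\E_i=\{u=0\}$, which handles every point of $\E_i$ except the tangent direction $[v:w]=[0:1]$. Substituting $u=sv$ and factoring out $s$, the residual coordinates along $\E_i=\{s=0\}$ are $[0:\cdots:0:x\,p_{d-2}(x,1):v\,p_{d-1}'(x,1)]$, so only the point $v=0$ remains to be checked.

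The one genuine issue---and what I expect to be the main obstacle---is precisely this point $[0:1]\in\E_i$, which is the tangent direction of the strict transform of $\V(f_d)=\V(p_{d-1}(\lambda,1))$. Its resolution requires $x\,p_{d-2}(x,1)\neq 0$, i.e.\ that $p_{d-1}(\lambda,1)$ and $p_{d-2}(\lambda,1)$ share no root. I would deduce this coprimality from the recurrence: a common root $x$ of $p_{d-1}(\cdot,1)$ and $p_{d-2}(\cdot,1)$ forces $\lambda^2 p_{d-3}(x,1)=0$, and since $x\neq 0$ this gives $p_{d-3}(x,1)=0$; descending the recurrence yields $p_0(x,1)=1=0$, a contradiction. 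With $x\,p_{d-2}(x,1)\neq 0$ established, $[0:1]$ is not a base point, so a single blowup resolves $P_i$. Finally, the computed multiplicities ($d$ at $P_2$ and $1$ at each $P_i$) are exactly the inputs needed for \eqref{eq:class_of_strict_transform}, and will be recorded in \Cref{lem:strict_transforms_IG}.
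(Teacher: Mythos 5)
Your proposal is correct and follows essentially the same route as the paper: compute the vanishing order of $f_\gen$ from the lowest-degree terms of the $f_j$ in the charts $\{\lambda=1\}$ and $\{\mu=1\}$, then check both standard charts of a single blowup to see that the lifted coordinate functions have no common zero on the exceptional divisor. The only difference is that you explicitly justify the nonvanishing constants the paper uses implicitly (that $p_{r-1}(1,0)=1$, that the roots of $p_{d-1}(\lambda,1)$ are nonzero, and that consecutive Bessel polynomials are coprime so $x\,p_{d-2}(x,1)\neq 0$), which is a welcome tightening rather than a different argument.
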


\begin{proof}
\textbf{Part (1):} Consider the affine chart $\PP^2\cap\{\lambda=1\}\cong\AA^2_{(\mu,s)}$. In this chart, $P_2$ is the origin of $\AA^2$, and the coordinate functions of $\phi$ are given by
\[f_j(\mu,s)=s^{d-j}\mu^j p_{j-1}(1,\mu). \]
The lowest degree terms in $\mu,s$ of these functions are all of degree $d$, and so this proves part (1). Furthermore, these degree $d$ terms of $f_0$ and $f_d$ are $s^d$ and $\mu^d$ respectively, and so the coordinate functions do not have a common tangent direction at the origin. Thus, the strict transform of $\V(f_\gen)$ does not intersect the exceptional divisor $\mathcal{E}_2$.

\smallskip
\noindent
\textbf{Part (2):} Recall that $P_i=[x_{i-2}:1:0]$ where $x_{i-2}$ is a simple root of the Bessel polynomial $p_{d-1}(\lambda,1)$. Consider the affine chart $\PP^2\cap\{\mu=1\}=\AA^2_{(\lambda,s)}$, along with the change of coordinates $\lambda'=\lambda-x_{i-2}$. In this chart, the coordinate functions of $\phi$ are given by
\[f_0=(\lambda'+x_{i-2})^{d-1} s^d,\qquad f_j=(\lambda'+x_{i-2})^{d-j}s^{d-j}p_{j-1}(\lambda'+x_{i-2},1),\quad \text{for }j=1,\ldots,d.\]
The lowest degree terms in $\lambda',s$ of these functions are $s^d,s^{d-1},\ldots,s,\lambda'$ respectively. Thus, $P_i$ is a zero of $f_\gen$ with multiplicity $1$, and the coordinate functions do not have a common tangent direction at the origin $(\lambda',s)=(0,0)$. This completes the proof.
\end{proof}

For illustration purposes, we now explicitly carry out the blowups over $P_1$ in the case  $d=4$.

\begin{example}\label[ex]{ex:ig_blowups_d_4}
\textbf{Blowing up at $P_{1}$:} Consider the affine chart $\PP^2\cap\{s=1\}\cong\AA^2$ with coordinates $(\lambda,\mu)$, and where $\phi$ is given by 
\[(\lambda,\mu)\mapsto 
\left(\lambda^{3},\:
\lambda^{3} \mu,\: 
\lambda^{2} \mu^2 (\lambda+ \mu),\: 
\lambda\mu^{3}(\lambda^2+3\lambda \mu+3\mu^2),\:
\mu^{4} (15\mu^3+15\mu^2 \lambda+6\mu \lambda^2+\lambda^3)\right).
\]
Blowing up at $P_1$ corresponds to blowing up at $(0,0)$ in this chart. The resulting surface $\calS_{1,1}$ is the closure in $\PP^2\times\PP^1$ of 
\[\calB_{1,1}=\{((\lambda,\mu),[v_1:w_1])\in\AA^2\times\PP^1:\lambda w_1=\mu v_1\}\subseteq\PP^2\times\PP^1.\]

Consider the affine chart $\calB_{1,1}\cap \{w_1=1\}$.
Substituting $\lambda=\mu v_1$ and factoring out a common factor $\mu^3$ gives that $\phi_{1,1}$ on this chart is given by
\[(\mu,v_1)\mapsto \left[\begin{array}{l}v_1^3:\\
    \mu v_1^3:\\
    \mu v_1^2  (\mu v_1+ \mu):\\
    \mu v_1  (\mu^2 v_1^2+3\mu^2 v_1 +3\mu^2):\\
    \mu (\mu^3v_1^3+6\mu^3v_1^2+15\mu^3 v_1+15\mu^3)\end{array}\right].\]
Note that the exceptional divisor $\E_{1,1}$ of the blowup is defined by $\mu=0$, and it intersects the strict transform of $\V(f_\gen)$ with multiplicity $3$ at the point $(0,0)$. This corresponds to the point $P_{1,1}\in\calS_{1,1}$, which gives the tangent direction $\lambda=0$ of $\V(f_\gen)$ at $P_1$.

\smallskip
\noindent
\textbf{Blowing up at $P_{1,1}$:} The surface $\calS_{1,2}$ is defined as the closure in $\calS_{1,1}\times\PP^1$ of
\[\calB_{1,2}=\{((\mu,v_1),[v_2:w_2])\in\AA^2\times\PP^1:\mu w_2=v_1v_2\}.\]
Taking the affine chart $\{v_2=1\}$, substituting $v_1=\mu w_2$ and factoring out $\mu^3$, we get that $\phi_{1,2}$  on this chart is given by 
\[(\mu,w_2)\mapsto\left[\begin{array}{l}
w_2^3:\\
w_{2}^{3}\mu:\\
w_2^2  \mu (\mu w_2+ 1):\\
w_2  \mu(\mu^2 w_2^2+3\mu w_2 +3):\\
\mu(\mu^3 w_2^3+6\mu^2 w_2^2+15\mu w_2+15)
\end{array}\right],\]
with indeterminacy point $(0,0)$, with multiplicity $1$; this corresponds to the point $P_{1,2}\in\calS_{1,2}$.

\smallskip
\noindent
\textbf{Blowing up at $P_{1,2}$:}
We blow up at this point to get a surface $\calS_{1,3}$ with local coordinates $(\mu,w_2)\times[v_3:w_3]$. 
Considering the affine chart $\{w_3=1\}$, substituting $\mu=w_2v_3$ into our coordinate functions and factoring out $w_2$, we see that $\phi_{1,3}$ on this chart is given by
\[(w_2,v_3)\mapsto\left[\begin{array}{l}
 w_{2}^{2}:\\
 w_{2}^{3} v_3:\\
 w_2^2 v_3  (w_{2}^{2} v_{3}+ 1):\\
w_2  v_3 (w_{2}^{4} v_{3}^{2} +3 w_{2}^{2} v_{3}  +3):\\
v_3 (w_2^6v_3^3+6w_2^4v_3^2+15w_2^2v_3+15).
\end{array}\right],\]
with indeterminacy point $(0,0)$ with multiplicity $1$; we call this point $P_{1,3}\in\calS_{1,3}$.

\smallskip
\noindent
\textbf{Blowing up at $P_{1,3}$:}
We blow up at this point to get a surface $\calS_{1,4}$ with local coordinates $(w_2,v_3)\times[v_4:w_4]$. 
Considering the affine chart $\{v_4=1\}$, substituting $v_3=w_2w_4$ into our coordinate functions, and factoring out $w_2$, we obtain
\[(w_2,w_4)\mapsto \left[\begin{array}{l}
w_{2}\\
w_{2}^{3} w_4\\
w_2^2 w_4  (w_{2}^{3} w_4+ 1)\\
w_2w_4 (w_{2}^{6} w_4^{2} +3 w_{2}^{3} w_4  +3)\\
w_4 (w_2^9w_4^3+6w_2^6w_4^2+15w_2^3w_4+15).
\end{array}\right]\]
with indeterminacy point $(0,0)$ of multiplicity $1$; we call this point $P_{1,4}\in\calS_{1,4}$.

\smallskip
\noindent
\textbf{Blowing up at $P_{1,4}$:}
We blow up at this point to get a surface $\calS_{1,5}$ with local coordinates $(w_2,w_4)\times[v_5:w_5]$. 
Considering the affine chart $\{v_5=1\}$, substituting $w_4=w_2w_5$ into our coordinate functions, and factoring out $w_2$, we finally obtain
\[(w_2,w_5)\mapsto \left[\begin{array}{l}
1\\
w_{2}^{3} w_5\\
w_2^2 w_5  (w_{2}^{4} w_5+ 1)\\
w_2w_5 (w_{2}^{8} w_5^{2} +3 w_{2}^{4} w_5  +3)\\
w_5 (w_2^{12}w_5^3+6w_2^8w_5^2+15w_2^4w_5+15).
\end{array}\right],\]
which lacks indeterminacy points. One can check that at each of the blowup steps above, the other choice of affine chart also does not contain any indeterminacy points for the lift of $\phi$. The lift of $\phi$ is therefore now well-defined over the original point $P_1\in\PP^2$.
\end{example}

The following intersection-theoretic formulas are a direct consequence of \Cref{lem:blowup_ig_P1} and \Cref{lem:blowup_ig_other_base_points} together with \eqref{eq:class_of_strict_transform}.

{\samepage
\begin{lemma}\label[lem]{lem:strict_transforms_IG}
The following formulas hold in $\Pic(\calS_d)$:
\begin{enumerate}
    \item Let $\mathcal{L}_1\subseteq\PP^2$ be the line through $P_1$ and $P_2$. The class of the strict transform of $\mathcal{L}_1$ to $\calS_d$ is $L-E_{1,1}-E_2$.
    \item Let $\mathcal{L}_2\subseteq\PP^2$ be the line through $P_1$ and with tangent direction $\lambda=0$ (meaning that the strict transform of $\mathcal{L}_2$ under the initial blowup map goes through the point $P_{1,1}$). Then the class of the strict transform of $\mathcal{L}_2$ is $L-E_{1,1}-E_{1,2}$. 
    \item Let $\mathcal{L}_3\subseteq\PP^2$ be the line going through the colinear points $P_3,\ldots,P_{d+1}$. The class of its strict transform in $\Pic(\calS_d)$ is $L-E_3-\cdots -E_{d+1}$.
    \item\label{it:H_expression_IG} The class of the strict transform of $\V(f_\gen)$ to $\calS_d$ for $f_\gen=\sum_{i=0}^d a_if_i$ with generic coefficients $(a_0,\ldots,a_d)\in\CC^{d+1}$ is
    $$H=(2d-1)L-(d-1)E_{1,1}-(d-1)E_{1,2}-E_{1,3}-\cdots-E_{1,d+1}-dE_{2}-E_3-\cdots -E_{d+1}.$$
\end{enumerate}
\end{lemma}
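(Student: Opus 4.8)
The plan is to derive each of the four classes directly from the general strict-transform formula \eqref{eq:class_of_strict_transform}, namely $\tilde{C}=\deg(C)L-\sum_{i,j}m_{i,j}E_{i,j}$, where $m_{i,j}$ is the multiplicity of the relevant curve at $P_{i,j-1}$ on the intermediate surface $\calS_{i,j-1}$. For the three lines in parts (1)--(3), the degree term is $L$ (since each is a line, $\deg=1$), so the entire content is in computing the multiplicities $m_{i,j}$, i.e.\ determining \emph{which} of the exceptional divisors the strict transform of the line passes through, each with multiplicity one.

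First I would handle the three lines. For $\mathcal{L}_1$ through $P_1$ and $P_2$: it passes through $P_1$ once, and since it is a generic-enough line through $P_1$ it will meet $\E_{1,1}$ at the point corresponding to its own tangent direction, which is generic and hence distinct from $P_{1,1}$ (the direction $\lambda=0$ singled out in \Cref{lem:blowup_ig_P1}(2)); thus after the first blowup its strict transform leaves the $P_1$-tower, contributing only $-E_{1,1}$. It also passes through $P_2$ simply, contributing $-E_2$, giving $L-E_{1,1}-E_2$. For $\mathcal{L}_2$, the line through $P_1$ with tangent direction $\lambda=0$: by construction its strict transform passes through $P_{1,1}$ (the point on $\E_{1,1}$ in the direction $\lambda=0$), so it meets both $\E_{1,1}$ and $\E_{1,2}$, but no later exceptional divisors, since the subsequent infinitely-near points $P_{1,2},P_{1,3},\ldots$ lie in directions determined by the higher-order behavior of $\V(f_\gen)$ rather than a line; hence $L-E_{1,1}-E_{1,2}$. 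For $\mathcal{L}_3$ through the collinear $P_3,\ldots,P_{d+1}$ (all of the form $[x_{i-2}:1:0]$, lying on $\V(s)$): it passes simply through each, and by \Cref{lem:blowup_ig_other_base_points}(2) each of these is resolved by a single blowup, so $L-E_3-\cdots-E_{d+1}$.

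The class $H$ in part (4) is the main content and follows by assembling the multiplicity data from \Cref{lem:blowup_ig_P1} and \Cref{lem:blowup_ig_other_base_points}. The degree of $\V(f_\gen)$ is $2d-1$: in the chart $\{s=1\}$ the coordinate function $f_d=\mu^d p_{d-1}(\lambda,\mu)$ has degree $2d-1$ in $(\lambda,\mu)$, and this is the top-degree term appearing among the $f_i$, so a generic combination has degree $2d-1$. For the multiplicities along the $P_1$-tower, \Cref{lem:blowup_ig_P1}(1)--(2) give multiplicity $d-1$ at $P_1$ and again $d-1$ at $P_{1,1}$, and parts (3)--(4) give multiplicity one at each of $P_{1,2},\ldots,P_{1,d}$, contributing $-(d-1)E_{1,1}-(d-1)E_{1,2}-E_{1,3}-\cdots-E_{1,d+1}$; from \Cref{lem:blowup_ig_other_base_points}(1) the multiplicity at $P_2$ is $d$, contributing $-dE_2$; and from \Cref{lem:blowup_ig_other_base_points}(2) the multiplicity at each $P_i$ for $i=3,\ldots,d+1$ is one, contributing $-E_3-\cdots-E_{d+1}$. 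Plugging all of this into \eqref{eq:class_of_strict_transform} yields the stated formula for $H$.

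The one point that needs genuine care, rather than bookkeeping, is the claim in part (2) of the lemma that $\mathcal{L}_2$ meets \emph{exactly} $\E_{1,1}$ and $\E_{1,2}$ and no further exceptional divisors in the $P_1$-tower, and symmetrically the claim in part (1) that $\mathcal{L}_1$ escapes the tower immediately after $\E_{1,1}$. These are statements about whether the strict transform of a line passes through the infinitely-near points $P_{1,2},P_{1,3},\dots$, which were defined in \Cref{lem:blowup_ig_P1} by the local behavior of $\V(f_\gen)$. I would verify them by tracking the line's local equation through the explicit blowup charts of \Cref{lem:blowup_ig_P1} (equivalently, \Cref{ex:ig_blowups_d_4} for $d=4$): a line has a linear local equation, so after blowing up at $P_{1,1}$ its strict transform is smooth and transverse to the next center, and one checks in coordinates that $P_{1,2}$ is \emph{not} on it. This transversality check is the crux; everything else is substitution into \eqref{eq:class_of_strict_transform}.
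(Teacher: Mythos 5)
Your treatment of parts (1) and (4) is sound and follows the route the paper intends (reading multiplicities off \Cref{lem:blowup_ig_P1} and \Cref{lem:blowup_ig_other_base_points} and substituting into \eqref{eq:class_of_strict_transform}); in (1), note that $\mathcal{L}_1$ is the specific line $\V(\mu)$ rather than a ``generic-enough'' line through $P_1$, but the conclusion survives because its tangent direction $\mu=0$ at $P_1$ differs from $\lambda=0$, so its strict transform misses $P_{1,1}$. The genuine problem is the verification you defer in part (2) and correctly identify as the crux: it fails. In the chart $\AA^2_{(\mu,v_1)}$ with $\lambda=\mu v_1$, the strict transform of $\mathcal{L}_2=\V(\lambda)$ is $\{v_1=0\}$, while the strict transform of $\V(f_\gen)$ is the zero locus of $a_0v_1^{d-1}+\sum_{j=1}^{d}a_j\mu^j v_1^{d-j}p_{j-1}(v_1,1)$, whose lowest-order part at $P_{1,1}=(0,0)$ is $a_0v_1^{d-1}$ (each $p_{j-1}(0,1)\neq 0$, so the $j$th summand has order $d$). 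Thus the unique tangent direction of the strict transform of $\V(f_\gen)$ at $P_{1,1}$ is along $\{v_1=0\}$, so $P_{1,2}=((0,0),[1:0])$ is exactly the point where the strict transform of $\V(\lambda)$ meets $\E_{1,2}$ (with the relation $\mu w_2=v_1v_2$, points of $\{v_1=0\}$ with $\mu\neq 0$ have $[v_2:w_2]=[1:0]$). Hence the strict transform of $\mathcal{L}_2$ \emph{does} pass through $P_{1,2}$ (and one checks it misses $P_{1,3}=((0,0),[0:1])$), so its class is $L-E_{1,1}-E_{1,2}-E_{1,3}$, not $L-E_{1,1}-E_{1,2}$. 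An independent confirmation: $\phi$ contracts $\V(\lambda)$ to $[0:\cdots:0:1]$, so its strict transform $C$ must satisfy $H\cdot C=0$; with $H$ as in part (4) this holds for $L-E_{1,1}-E_{1,2}-E_{1,3}$ but gives $1$ for the stated class.

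There is a second, independent gap in part (3): the line through the collinear points $P_3,\ldots,P_{d+1}$ is $\V(s)$, and this line also contains $P_2=[1:0:0]$, which your argument (and the stated formula) omits. Since $\V(s)$ passes through $P_2$ with multiplicity one and $P_2$ is resolved by a single blowup, the class is $L-E_2-E_3-\cdots-E_{d+1}$. The same contraction check applies: $f_j|_{s=0}=0$ for $j<d$, so $\V(s)$ is contracted to $[0:\cdots:0:1]$ and its strict transform must have $H$-degree zero, which forces the $-E_2$ term. So the proposal cannot be completed as written for parts (2) and (3); the multiplicity bookkeeping must be redone as above, and the corrected classes then need to be propagated into the intersection computations (e.g.\ the $b_1=b_2=0$ case in the proof of \Cref{thm:IG_nondef}) where this lemma is invoked.
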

}

As a consequence of \Cref{lem:blowup_ig_P1,lem:blowup_ig_other_base_points,lem:strict_transforms_IG} we get the following proposition.

\begin{proposition}
     The map $\tilde{\phi}\from\calS_d\to\M_d^{\IG}$ is a birational morphism.
\end{proposition}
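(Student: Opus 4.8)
The plan is to verify the two defining properties of a birational morphism separately: that $\tilde{\phi}$ is everywhere-defined (a morphism), and that it is generically one-to-one (birational). The morphism property is essentially a bookkeeping consequence of the resolution lemmas, while birationality comes from an explicit rational inverse to the parametrization.

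First I would argue that $\tilde{\phi}$ is a morphism. The map $\phi\from\PP^2\dashrightarrow\M_d^\IG$ is regular away from the indeterminacy locus $\{P_1,\ldots,P_{d+1}\}$, and since each blowup in the construction of $\pi\from\calS_d\to\PP^2$ is centered at a point lying over one of these $P_i$, the composite $\pi$ is an isomorphism over $\PP^2\setminus\{P_1,\ldots,P_{d+1}\}$. Hence $\tilde{\phi}=\phi\circ\pi$ is automatically regular over this open set, and it remains only to check the exceptional locus. Over $P_1$, part (4) of \Cref{lem:blowup_ig_P1} shows that after the $d+1$ prescribed blowups the lift $\phi_{1,d+1}$ has no indeterminacy point on $\E_{1,d+1}$, and the intermediate parts (2)--(3) confirm that no indeterminacy points are stranded on the earlier exceptional divisors $\E_{1,1},\ldots,\E_{1,d}$ once the subsequent blowup is performed. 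Over $P_2$ and over each $P_i$ with $i\geq 3$, \Cref{lem:blowup_ig_other_base_points} shows that a single blowup already removes all indeterminacy on the corresponding exceptional divisor. Assembling these statements, $\tilde{\phi}$ has no indeterminacy points on any exceptional divisor, and therefore is a morphism on all of $\calS_d$.

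Next I would establish birationality, which reduces to showing that $\phi$ itself is birational onto its image, since $\pi$ is a birational morphism as a composition of blowups. The parametrization admits an explicit rational inverse: from the recursion \eqref{eq:parametrization_inverse_gaussian} we compute $m_0=1$, $m_1=\mu$, and $m_2=\tfrac{\mu^3}{\lambda}+\mu^2$, so that on the locus $m_0\neq 0$, $m_0 m_2 - m_1^2 \neq 0$ one recovers the parameters by the rational formulas
\[
\mu=\frac{m_1}{m_0},\qquad \lambda=\frac{m_1^{\,3}}{m_0\,(m_0 m_2-m_1^{\,2})}.
\]
Writing these in the homogeneous coordinates $[x_0:\cdots:x_d]$ of $\PP^d$ (with $m_i=x_i/x_0$) gives an explicit rational map $\M_d^\IG\dashrightarrow\PP^2$ that inverts $\phi$ on a dense open set. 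Thus $\phi$ is dominant and generically injective, hence birational, and so the composite $\tilde{\phi}=\phi\circ\pi$ is a dominant, generically injective morphism, i.e.\ a birational morphism.

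The only genuinely nontrivial input is the exhaustiveness and completeness of the indeterminacy resolution, and this is already supplied by \Cref{lem:blowup_ig_P1,lem:blowup_ig_other_base_points}: the former handles the delicate infinitely-near tower over $P_1$ (where the multiplicity drops from $d-1$ to $1$ only after the tangent-direction blowup and then propagates along the strict transform of the line $\lambda=0$), and the latter disposes of the remaining base points. Given those lemmas, the proof of the proposition is a short synthesis, with the rational-inverse computation above furnishing birationality.
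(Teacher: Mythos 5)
Your proposal is correct, and the first half (that $\tilde{\phi}$ is a morphism) is exactly the paper's argument: assemble \Cref{lem:blowup_ig_P1} and \Cref{lem:blowup_ig_other_base_points} to see that the lift has no remaining indeterminacy points. Where you diverge is in the proof of birationality. The paper deduces $\deg(\tilde{\phi})=1$ from intersection theory: by part (\ref{it:H_expression_IG}) of \Cref{lem:strict_transforms_IG} one computes $H^2=(d-1)^2$, and since $H$ is the pullback of a hyperplane class one has $H^2=\deg(\M_d^{\IG})\deg(\tilde{\phi})$, which together with $\deg(\M_d^{\IG})=(d-1)^2$ from \Cref{thm:IG_results_from_previous_paper} forces $\deg(\tilde{\phi})=1$. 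You instead exhibit an explicit rational inverse $\mu=x_1/x_0$, $\lambda=x_1^3/(x_0(x_0x_2-x_1^2))$, which is a correct computation (indeed $m_0m_2-m_1^2=\mu^3/\lambda$ when $m_0=1$) and shows $\phi$ is generically injective, hence birational, directly. Your route is more elementary and self-contained: it needs neither the degree of $\M_d^{\IG}$ from the earlier paper nor the full bookkeeping of multiplicities encoded in $H$. The paper's route, by contrast, doubles as a consistency check on the class $H$ that is used heavily in the subsequent non-defectivity argument (the identity $H^2=(d-1)^2$ would fail if any multiplicity in \Cref{lem:strict_transforms_IG} were wrong), and it generalizes to situations where an explicit inverse is not readily available. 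Both arguments are complete; just make sure, as you implicitly do, that the locus $x_0(x_0x_2-x_1^2)\neq 0$ meets $\M_d^{\IG}$ in a dense open set, which holds since a generic parameter choice has $\mu\neq 0$.
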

\begin{proof} On the one hand, by \Cref{lem:blowup_ig_P1,lem:blowup_ig_other_base_points}, the map $\tilde{\phi}:=\phi_d$ has no indeterminacy points, i.e., it is a morphism.
 On the other hand, by part (\ref{it:H_expression_IG}) of \Cref{lem:strict_transforms_IG}, we have that
\[H^2=(2d-1)^2-(d-1)^2-(d-1)^2-(d-1)-d^2-(d-1)=(d-1)^2=\deg(\M_d^{\IG}),\]
where the last equality follows from \Cref{thm:IG_results_from_previous_paper}. 
Since $H$ is the pullback of a hyperplane section of $\M_d^{\IG}$ along $\tilde{\phi}$, we have that  $H^2=\deg(\M_d)\deg(\tilde{\phi})$. Therefore, we conclude that $\deg(\tilde{\phi})=1$. Since $\tilde{\phi}$ is dominant, the desired result follows.
\end{proof}

\begin{theorem}\label{thm:IG_nondef}
The moment variety $\M_d^{\IG}$ is $k$-nondefective for all $k\geq 2$ and $d\geq 2$.
\end{theorem}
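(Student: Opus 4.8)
The plan is to assume for contradiction that $\M_d^{\IG}$ is $k$-defective for some $k\geq 2$ and $d\geq 2$, and to exclude every possibility in Terracini's classification (\Cref{thm:terracini}). By \Cref{lem:base_cases} I may assume $d\geq 9$, and by \Cref{lem:lower_bound_on_d_for_defective_surfaces} I may replace $k$ by a possibly larger value and assume $k\geq(d-2)/3$, equivalently $d\leq 3k+2$. Case~(1) of \Cref{thm:terracini} is ruled out immediately: the quadratic Veronese embedding of a rational normal surface is smooth or singular at a single point, whereas by \Cref{thm:IG_results_from_previous_paper} the singular locus of $\M_d^{\IG}$ is the union of a line and a point. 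It therefore remains to rule out case~(2).

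Assuming case~(2), \Cref{lem:divisors_Di_in_case_2} produces linearly equivalent effective divisors $\calD_1,\ldots,\calD_k$ through $k$ general points and a hyperplane pullback with effective class $\calA\sim H-2kD$, where $D=aL-\sum_{i,j}b_{i,j}E_{i,j}$ is the common class of the $\calD_i$ and $a\geq 1$ is the degree of the plane curve $\pi(\calD_i)$. Since $D$ is a moving class it is nef, so $b_{i,j}=D\cdot E_{i,j}\geq 0$ and $D^2\geq 0$; and since $\tilde\phi$ is birational (the Proposition proved above), $H$ is nef and big with $H^2=(d-1)^2$. Comparing $L$-coefficients in the effective class $\calA$ gives $2ka\leq 2d-1$, which together with $d\leq 3k+2$ forces $a\in\{1,2,3\}$.

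The workhorse of the casework is the pair of inequalities obtained by pairing the effective divisor $\calA$ against the nef classes $H$ and $D$, namely $\calA\cdot H\geq 0$ and $\calA\cdot D\geq 0$, which read
\[2k\,D^2\;\leq\;H\cdot D\;\leq\;\frac{(d-1)^2}{2k}.\]
Using the expression for $H$ from \Cref{lem:strict_transforms_IG} together with $b_{i,j}\geq 0$, one has
\[H\cdot D=(2d-1)a-(d-1)(b_{1,1}+b_{1,2})-d\,b_{2,1}-\sum_{i=3}^{d+1}b_{i,1}-\sum_{j=3}^{d+1}b_{1,j}.\]
Because $k\geq(d-2)/3$, the upper bound is at most $3(d-1)^2/\bigl(2(d-2)\bigr)$, and this already eliminates every configuration in which $\pi(\calD_i)$ avoids the two high-multiplicity base points $P_1$ and $P_2$. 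Concretely, $D=L$ and, more generally, any $D$ with $H\cdot D\geq 2d-2$ (such as pencils of lines through the collinear points $P_3,\ldots,P_{d+1}$) violate $\calA\cdot H\geq 0$ once $d\geq 6$. What survives are the low-degree families whose members are forced through $P_1$ or $P_2$ in order to keep $H\cdot D$ small; for $a=1$ these are exactly the pencils $D=L-E_{1,1}$ and $D=L-E_2$, with their analogues for $a=2,3$.

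The main obstacle is precisely these residual families concentrated at $P_1$ and $P_2$, where $D^2=0$ and the crude nef tests no longer decide the question. Here I would bring in two further inputs. First, the moving property of $D$: sliding a member onto the line $\mathcal{L}_1=\overline{P_1P_2}$, whose strict transform is contracted by $\tilde\phi$ since $H\cdot(L-E_{1,1}-E_2)=0$ by \Cref{lem:strict_transforms_IG}, forces a reducible degeneration that splits off exceptional components and lowers the multiplicities, hence $a$. Second, the explicit exceptional chain over $P_1$ from \Cref{lem:blowup_ig_P1}: the $(-2)$-curve $\Gamma_{1,1}$ with class $E_{1,1}-E_{1,2}$ satisfies $H\cdot\Gamma_{1,1}=0$, so for the pencil through $P_1$ one computes $\calA\cdot\Gamma_{1,1}=-2k\,(D\cdot\Gamma_{1,1})<0$, forcing $\Gamma_{1,1}$ and the remaining curves of the chain into the support of $\calA$; peeling these off and re-testing against $H$ and $D$ then contradicts effectivity. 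Carrying out this bookkeeping for each surviving pair $(a,D)$ is the genuinely delicate part of the argument, and the place where the nonlinear entries of the matrix $H_2$ and the long blowup chain over $P_1$ make the analysis heavier than in \cite{ARS18}; completing it rules out case~(2) and hence proves that $\M_d^{\IG}$ is $k$-nondefective for all $k\geq 2$ and $d\geq 2$.
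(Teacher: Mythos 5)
Your setup matches the paper's: reduce to case~(2) of Terracini via the singular locus, invoke \Cref{lem:divisors_Di_in_case_2} and \Cref{lem:lower_bound_on_d_for_defective_surfaces}, and do casework on the degree $a$ of $\pi(\calD_i)$. Your test $\calA\cdot H\geq 0$ is legitimate ($H$ is the pullback of a hyperplane class under the morphism $\tilde\phi$, hence nef) and it does eliminate $D=L$ and the pencils through the $P_i$ with $i\geq 3$ more directly than the paper, which handles the $b_1=b_2=0$ subcase by a longer component-peeling computation. However, there are two genuine problems.

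First, a smaller one: ``$D$ is moving, hence nef, hence $D^2\geq 0$'' is unjustified. In the paper's sense, \emph{moving} only means that through any point there is a linearly equivalent effective curve; such a class can still have fixed components of negative self-intersection (the paper itself exploits exactly this in the $a=3$ case, where the strict transform of $\mathcal{L}_1$ is a fixed component of the $\calD_i$). So the inequality $\calA\cdot D\geq 0$, i.e.\ $2kD^2\leq H\cdot D$, is not available without further argument.

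Second, and decisively: the surviving cases — pencils of lines through $P_1$ or through $P_2$ for $a=1$, and the conics through $P_1$, $P_2$ tangent to $\lambda=0$ for $a=2$ (to which $a=3$ reduces) — are exactly the heart of the proof, and you do not resolve them; you only sketch a plan (``peeling these off and re-testing \dots contradicts effectivity'') and acknowledge the bookkeeping is left undone. There is reason to doubt that any purely intersection-theoretic argument in $\Pic(\calS_d)$ can close these cases: \Cref{lem:divisors_Di_in_case_2} asserts that an \emph{actual} pullback of a hyperplane section contains $2\calD_1+\cdots+2\calD_k$, which is a much stronger statement than effectivity of the class $H-2kD$, and it is the stronger statement that the paper refutes. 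Concretely, the paper writes the condition as a polynomial identity $f_\gen=h\cdot\prod_{i=1}^k g_i^2$ with $f_\gen=\sum a_if_i$, and derives a contradiction from the explicit structure of the coordinate functions: the $f_r$ have pairwise disjoint monomial supports, each is divisible by a distinct power of $s$, the monomial $\lambda^{2k}$ (resp.\ $s^{2k}\lambda^{2k}$) in $\prod g_i^2$ divides only $f_0,\ldots,f_{d-2k}$, and the Bessel polynomial $p_{d-1}(\lambda,1)$ has only simple roots. None of this information is visible at the level of divisor classes, so your proposal is missing the key idea needed for the central cases rather than merely omitting routine details.
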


\begin{proof}
Fix $d\geq 2$, and suppose for contradiction that $\M_d^{\IG}$ is $k$-defective; we may assume that $3k+2\geq d$ via \Cref{lem:lower_bound_on_d_for_defective_surfaces}. By \Cref{thm:terracini} and \Cref{thm:IG_results_from_previous_paper}, it must be that $\M_d^{\IG}$ is contained in a cone over a curve, i.e., case (2) of Terracini's classification holds, as the singular locus of $\M^{\IG}_d$ is a line and this cannot occur in case (1). Construct the smooth resolution  $\pi\from\calS_d\to\PP^2$ of the indeterminacy locus of $\phi\from\PP^2\dashrightarrow\M^{\IG}_d$ via the sequence of blowups as detailed earlier in this section. Recall that $H$ denotes the class of the linear system on $\calS_d$ representing hyperplane sections of $\M_d^{\IG}\subseteq\PP^d$, pulled back to $\calS_d$ via $\tilde{\phi}\from\calS_d\to\M^{\IG}_d$. 
Then by \Cref{lem:divisors_Di_in_case_2}, there exists an effective divisor $\calA$ on $\calS_d$ with class $A\in\Pic(\calS_d)$ and linearly equivalent divisors $\calD_1,\ldots,\calD_k$ with class $D\in\Pic(\calS_d)$ such that $H$ can be expressed as
\[H=A+2kD.\]
The images of the $\calD_i$'s in $\PP^2$ all have the same degree $a\geq 1$. Our proof is organized by analyzing the different possibilities for this degree $a$, and in each case we derive a contradiction. We are then able to conclude that (2) of \Cref{thm:terracini} is not possible, and so $\M_d^{\IG}$ is not $k$-defective. 

Note that a generic enough $\calH$ (the pullback of a hyperplane section of $\M_d^{\IG}$ via \Cref{lem:divisors_Di_in_case_2}) will not contain any of the exceptional divisors. This implies that none of the curves $\calD_i$'s will contain any exceptional divisors, and so $D$ can be represented by a strict transform of a curve in $\PP^2$. By \eqref{eq:class_of_strict_transform}, it then follows that 
\begin{equation}\label{eq:D-expansion}
D= aL - b_1 E_{1,1} -b_2 E_2 -b_3 E_{1,2} - \sum \limits_{i=1}^{d-1} c_i E_{1,i+2} -\sum \limits_{i=1}^{d-1} c'_i E_{i+2},
\end{equation}
where $a= D \cdot L$ is a positive integer giving the degree of the representative's image in $\PP^2$, and $b_1,b_2,b_3,c_1, \ldots, c_{d-1}, c'_1, \ldots, c'_{d-1}$ are nonnegative.
Then
$$0\leq L \cdot A= L \cdot H -2k D \cdot L= (2d-1)-2ka.$$
From this inequality, and using \Cref{lem:lower_bound_on_d_for_defective_surfaces}, we have $(2ka+1)/2\leq d \leq 3k+2$,
which implies
\begin{equation}\label{inequality:k}
     ka+\frac{1}{2}\leq d \leq 3k+2.
\end{equation}

We now proceed by casework on the possibilities for $a\geq 1$. The cases of $a\geq 4$ and $a=3$ are  straightforward consequences from the inequality \eqref{inequality:k}, while the cases of $a=2$ and $a=1$ require a more careful analysis of the coordinate functions of $\phi$.

\smallskip
\noindent
\textbf{The case $a\geq 4$:}
The inequality \eqref{inequality:k} becomes 
$$  4k+ \dfrac{1}{2}\leq d \leq 3k+2$$
which is a contradiction, since $k\geq 2$.

\smallskip
\noindent
\textbf{The case $a=3$:} The inequality \eqref{inequality:k} becomes
$$  3k+ \dfrac{1}{2}\leq d \leq 3k+2.$$
Thus, we have only two possibilities for $d$: either $d=3k+1$ or $d=3k+2$. 

Assume $d=3k+1$. Then, we have
\begin{align*}
    A&=(2d-1-6k)L-(d-1-2k b_1) E_{1,1}-(d-2k b_2) E_2-(d-1-2k b_3) E_{1,2}-\cdots\\
    &=L-(3k-2k b_1) E_{1,1}-(3k+1-2k b_2) E_2-(3k-2k b_3) E_{1,2}-\cdots.
\end{align*}
Since $\calA$ is effective and $L\cdot A=1$, the curve $\pi(\calA)\subseteq\PP^2$ is a line.  Therefore we must have $3k-2kb_1\leq 1$, $3k+1-2k b_2\leq 1$, and $3k-2kb_3\leq 1$. Since $k \geq 2$, we get $b_1,b_2,b_3 \geq 2$. 
We see that $(L-E_{1,1}-E_2)\cdot D<0$, so the strict transform of the line $\mathcal{L}_{1}$ is a (fixed) component of the $\calD_i$'s. This means that the residual parts $\pi(\calD_i)\setminus\mathcal{L}_1$ of the curves $\pi(\calD_i)$ are moving conics. We can then reduce to the case $a=2$, treated below.

If $d=3k+2$, we obtain
\begin{align*}
    A&=(2d-1-6k)L-(d-1-2k b_1) E_{1,1}-(d-2k b_2) E_2-(d-1-2k b_3) E_{1,2}-\cdots\\
    &=3L-(3k+1-2k b_1) E_{1,1}-(3k+2-2k b_2) E_2-(3k+1-2k b_3) E_{1,2}-\cdots.
\end{align*}
Again, since $\calA$ is effective and $L\cdot A=3$, the curve $\pi(\calA)\subseteq\PP^2$ is a cubic. It will pass through the points $P_1, P_2$ with multiplicity given by the coefficients above, as well as pass through $P_1$ with tangent direction $\lambda=0$. For degree reasons, we must have that
\[3k+1-2k b_1\leq 3,\quad 3k+2-2k b_2\leq 3,\quad 3k+1-2k b_3\leq 3,\] 
which implies $b_1,b_3\geq 1$ and $b_2\geq 2$. If $b_1>1$ or $b_3>1$, we could reduce to the case $a=2$ as above (the plane cubic $\pi(\calD_i)$ would need to contain a fixed line between $P_1$ and $P_2$). If $b_1=b_3=1$ we obtain $3k+1-2k\leq 3$, which gives $k\leq 2$. Hence, we have reduced to the case when $k=2$ and $d=3k+2=8$, which is covered by \Cref{lem:base_cases}.

\smallskip
\noindent
\textbf{The case $a=2$:} For degree reasons, we can assume that $b_i \leq 1$; otherwise we can reduce to the case $a=1$. In particular, if $b_i=2$ for some $i$, then $\pi(\calD_i)$ is a conic with a double point, i.e., the union of two lines through the double point. Since $D$ is moving, one of these two lines must also be moving. We could then reduce to the $a=1$ case.

Furthermore, we can assume $b_1=b_2=b_3=1$, since otherwise we get $d\leq 8$ (which is covered by \Cref{lem:base_cases}). For example, if $b_1=0$, we obtain
$A= (2d-1-4k)L-(d-1)E_1-\cdots$, which implies $d-1\leq 2d-1-4k\leq 2d-1-4(d-2)/3$, where in the last inequality we used \eqref{inequality:k}.
Geometrically, this means that each curve $\calD_i$ intersects the exceptional divisors $\E_{1,1}$, $\E_2$ and $\E_{1,2}$.
In this case, we have that $\pi(\calD_i)\subseteq\PP^2$ is given by a quadric of the form 
$$g_i(\lambda,\mu,s)=\alpha_i s\lambda+\beta_i \mu^2+\gamma_i \lambda \mu.$$
In fact, since $\pi(\calD_i)$ passes through $P_1$ and $P_2$, the monomials $s^2$ and $\lambda^2$ cannot appear in $g_i$, and since $b_3=1$, the tangent direction at $P_1$ must be $\{\lambda=0\}$, so the monomial $s\mu$ also cannot appear. 

It follows from \eqref{eq:definition_of_A} that there exists some linear combination $f_\gen=\sum_{i=0}^d a_if_i$ of the coordinate functions with generic coefficients, and a nonzero homogeneous polynomial $h(\lambda,\mu,s)$ corresponding to the plane curve $\pi(\calA)$ such that
\[f_\gen=h(\lambda,\mu,s)\cdot \prod_{i=1}^k \left( \alpha_i s\lambda+\beta_i \mu^2+\gamma_i \lambda \mu\right)^2\] 
where $(\alpha_i, \beta_i, \gamma_i)\neq (0,0,0)$ for each $i$.
The contradiction will now follow from a divisibility argument, that relies on the following key observation: The monomials of $f_r$ for $r>0$ are $\lambda^{d-1-i}\mu^{r+i}s^{d-r}$ for $i\in\{0,\ldots,r-1\}$. Hence, the sets of monomials of $f_0,\ldots,f_d$ are pairwise disjoint, and are of the form $\lambda^\ell s^m \mu^n$ with $\ell+m+n=2d-1$, $\ell \leq d-1, m\leq d, n \leq 2d-1$. Moreover, we have either $m \leq \ell$, or $m=d$ and $\ell=d-1$.

We may reduce to the case where at least one $\alpha_i$ is nonzero; otherwise, each $\pi(\calD_i)$ is reducible and consists of two lines, and we can reduce to the case $a=1$. If some $\alpha_i$ is nonzero, then we may assume that all the $\alpha_i$'s are nonzero, as the $\calD_i$'s are linearly equivalent and move in a one-parameter family by \Cref{lem:divisors_Di_in_case_2}.

We therefore see that the monomial $m=s^{2k}\lambda^{2k}$ appears with nonzero coefficient in the expansion of $\prod_{i=1}^k g_i^2$. This monomial can only divide the coordinate functions $f_0,\ldots,f_{d-2k}$. 
Let $f'_i=f_i/m$ for each such function divisible by $m$. Then $h$ must be of the form
\[h=u_0f'_0+\cdots+u_{d-2k}f'_{d-2k},\]
for some coefficients $u_i$ not all zero (if $f_i$ is not divisible by $m$, then consider $u_i=0$ already). Note that $f_i'$ is divisible by $s^{d-2k-i}$. But if we now consider
\[h\cdot\prod_{i=1}^k(\beta_i\mu^2+\gamma_i\lambda\mu)^2=u_0f_0'\prod_{i=1}^k(\beta_i\mu^2+\gamma_i\lambda\mu)^2+\cdots+u_{d-2k}f'_{d-2k}\prod_{i=1}^k(\beta_i\mu^2+\gamma_i\lambda\mu)^2\]
and recall that each coordinate function is divisible by a distinct power of $s$, we see that we must have that
\[u_{d-2k}f'_{d-2k}\prod_{i=1}^k(\beta_i\mu^2+\gamma_i\lambda\mu)^2=vf_d\]
for some $v\in\CC$. If the conics $\V(\beta_i\mu^2+\gamma_i\lambda\mu)\subseteq\PP^2$ are moving, then we see that $u_{d-2k}=0$ since $f_d$ is fixed. Otherwise, there exists $\overline{\beta},\overline{\gamma}\in\CC$ such that $\beta_i=\overline{\beta}$ and $\gamma_i=\overline{\gamma}$ for all $i$, and so the left-hand side with $\mu=1$ has a root with multiplicity $2k>1$. However, $p_{d-1}(\lambda,1)$ and therefore the right-hand side only has simple roots. Thus, we must have that $u_{d-2k}=0$. We can repeat this argument with $u_{d-2k-1},\ldots,u_0$ in this order to see that all of the $u_i$'s must be zero.

\smallskip
\noindent
\textbf{The case $a=1$:} Again, we can assume that $b_i \leq 1$. In fact, since $D$ is moving, at most one of the coefficients  in the expansion \eqref{eq:D-expansion} of $D$ can be nonzero (if any two coefficients were equal to one, this would fix either two points or a point and a direction of $D$).  

We begin with the case $b_1=1$ (meaning that the representatives of $D$ are strict transforms of lines passing through $P_1$). 
Then the projection of the lines $\calD_i$ must come from linear forms of the form 
$g_i(\lambda,\mu,s) = \alpha_i \lambda +\beta_i \mu$, where $(\alpha_i,\beta_i)\neq (0,0)$. It then follows from \Cref{lem:divisors_Di_in_case_2} that there exists a linear combination $f_\gen=\sum_{i=0}^d a_if_i$ for generic coefficients $a_0,\ldots,a_d$, and a nonzero homogeneous polynomial $h(\lambda,\mu,s)$ such that
\begin{equation}\label{eq2:F-A}
    f_\gen= h(\lambda,\mu,s) \cdot \prod_{i=1}^k \left( \alpha_i \lambda+\beta_i \mu\right)^2. 
\end{equation} 
We now give a divisibility argument similar to that in the $a=2$ case to show that this is impossible. 
First, if $\alpha_i=0$ for all $i$, the projections of the lines $\calD_i$ would pass through both $P_1$ and $P_2$. Then both $b_1,b_2\neq 0$ in \eqref{eq:D-expansion}, contradicting the assumption that at most one of the coefficients is nonzero. We may therefore assume that all $\alpha_i\neq 0$. Then $\lambda^{2k}$ appears with nonzero coefficient in the expansion of $\prod_{i=1}^k g_i^2$, and this monomial only divides the coordinate functions $f_0,\ldots,f_{d-2k}$. Then, by the same argument as in the $a=2$ case, this implies that the polynomial $h(\lambda,\mu,s)$ must be zero, which is a contradiction.

The case $b_2=1$ (meaning that all representatives of $D$ are strict transforms of lines passing through $P_2$) is similar (we instead get each $\calD_i$ would  come from linear forms in $\mu$ and $s$).

Finally, we consider the possibility that $b_1=b_2=0$. In this case,
\[A=H-2kD=(2d-1-2k)L-(d-1)E_{1,1}-dE_2-(d-1)E_{1,2}-\cdots.\]
We show that the curve $\calA$ on $\calS_d$ must have a number of irreducible components that will eventually contradict the condition $d\leq 3k+2$. 

Consider three curves on $\calS_d$: the 
strict transform of the line $\mathcal{L}_1$ between $P_1$ and $P_2$, the strict transform of the line $\mathcal{L}_2$ through $P_1$ whose strict transform passes through $P_{1,2}$, and the strict transform of the exceptional divisor $\mathcal{E}_{1,1}$ over $P_1$. By \Cref{lem:strict_transforms_gamma}, the class of the first in $\Pic(\calS_d)$ is $L-E_{1,1}-E_2$, of the second is $L-E_{1,1}-E_{1,2}$ and the third is $E_{1,1}-E_{1,2}.$
Now, since we have a negative intersection multiplicity  
$$A\cdot (L-E_{1,1}-E_2)=(2d-1-2k)-(d-1)-d=-2k-1<0,$$ 
we conclude that the strict transform of $\mathcal{L}_1$ is a component of $\calA$, appearing with some multiplicity $m>0$. Then the rest of $\calA$ has class
\[A-m(L-E_{1,1}-E_2)=(2d-1-2k-m)L-(d-1-m)E_{1,1}-(d-m)E_2-(d-1)E_{1,2}-\cdots\]
and has intersection multiplicity with the strict transform of $\mathcal{L}_2$ given by
\[(A-m(L-E_{1,1}-E_2))\cdot (L-E_{1,1}-E_{1,2})=(2d-1-2k-m)-(d-1-m)-(d-1)=1-2k<0.\] This is negative, so this strict transform is a component of $\calA$ with some multiplicity $m'.$
So the new rest of $\calA$ has class
\begin{align*}A-&m(L-E_{1,1}-E_2)-m'(L-E_{1,1}-E_{1,2})=\\
&(2d-1-2k-m-m')L-(d-1-m-m')E_{1,1}-(d-m)E_2-(d-1-m')E_{1,2}-\cdots.
\end{align*}
Finally, this has intersection multiplicity with the class $E_{1,1}-E_{1,2}$ given by
\[(A-m(L-E_{1,1}-E_2)-m'(L-E_{1,1}-E_{1,2}))\cdot (E_{1,1}-E_{1,2})=(d-1-m-m')-(d-1-m')=-m<0\]
so the corresponding strict transform appears with some multiplicity $m''>0$ in $\calA$.  The rest $\calA'$ of $\calA$, after subtracting the three kinds of components we have found, has class
\begin{align*}
A'&=A-m(L-E_{1,1}-E_2)-m'(L-E_{1,1}-E_{1,2})-m''(E_{1,1}-E_{1,2})\\
&=(2d-1-2k-m-m')L-(d-1-m-m'+m'')E_{1,1}-(d-m)E_2\\
&-(d-1-m'-m'')E_{1,2}-\cdots.
\end{align*}
Now, $$A'\cdot (E_{1,1}-E_{1,2})=d-1-m-m'+m''-(d-1-m'-m'')=2m''-m\geq 0$$ only if $m''\geq m/2.$ Furthermore $$A'\cdot (L-E_{1,1}-E_{1,2})=1-2k+m'\geq 0$$ only if $m'\geq 2k-1$, and 
$$A'\cdot (L-E_{1,1}-E_2)=-2k-m-m'-(-m-m'+m'')-(-m)=-2k+m-m''\geq 0$$ only if $m-m''\geq 2k$.
But $m''\geq m/2$ and $m-m''\geq 2k$ means $m\geq 4k$, so when in addition  $m'\geq 2k-1$, we get 
$$0\leq A'\cdot L=2d-1-2k-m-m'\leq 2d-1-2k-4k-2k+1=2d-8k,$$ so $d\geq 4k$.  
So we conclude that $4k\leq d\leq 3k+2$, which is possible only if $k\leq 2$ and $d\leq 8$, which is covered by \Cref{lem:base_cases}. 
\end{proof}

\section{The gamma distribution}\label{sec:gamma}
Similarly to the inverse Gaussian, the gamma distribution is given by two parameters: a shape parameter $k$ and a scale parameter $\theta$. Expressed in these parameters, the $r$th moment is 
\[m_r=\theta^r\prod_{i=0}^{r-1}(i+k)\quad \text{for $r\geq 0$}.\]
We have the following determinantal realization and description of the singular locus of the $d$th moment variety $\M_d^\Gamma\subseteq\PP^d$.

\begin{theorem}[{§4, \cite{HSY24}}]\label{thm:gamma_results_from_old_paper}
Let $d \geq 3$. The homogeneous prime ideal of the gamma moment variety $\M^\Gamma_d$ is  generated by the $\binom{d}{3}$ cubics given by the maximal minors of the $(3\times d)$-matrix
$$\begin{pmatrix}0  & x_1 & 2x_2 & 3x_3&\cdots& (d-1)x_{d-1}\\
x_0 & x_1 & x_2 & x_3 &\cdots& x_{d-1}\\
x_1 & x_2 & x_3 & x_4  & \cdots& x_d
\end{pmatrix}.$$
Furthermore, $\M^\Gamma_d$ has degree $\binom{d}{2}$. The singular locus is given by two points in $\PP^d$:
$$x_0=x_1=\cdots=x_{d-1}=0\quad \text{and}\quad x_1=x_2=\cdots=x_d=0.$$
\end{theorem}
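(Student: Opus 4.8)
The plan is to establish the three assertions of \Cref{thm:gamma_results_from_old_paper} — the determinantal description with primeness, the degree, and the singular locus — in that order, exploiting throughout the recursion $m_{r+1}=\theta(k+r)\,m_r$ defining the moments. First I would verify membership by exhibiting an explicit linear dependence among the three rows: writing the $j$th column as $(j\,m_j,\,m_j,\,m_{j+1})^{\top}$ and using $m_{j+1}=\theta(k+j)m_j=\theta k\cdot m_j+\theta\cdot(j\,m_j)$, the bottom row equals $\theta k$ times the middle row plus $\theta$ times the top row. Hence the matrix has rank $\le 2$ along $\M_d^\Gamma$, so $\M_d^\Gamma\subseteq\V(J)$, where $J$ denotes the ideal generated by the maximal minors. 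Conversely, a point of $\V(J)$ at which the matrix has rank exactly $2$ admits a relation $(c_0 j+c_1)x_j+c_2 x_{j+1}=0$ for all $j$; when $c_0,c_2\neq 0$ this is exactly the gamma recursion with $\theta=-c_0/c_2$ and $k=c_1/c_0$, so such points lie on $\M_d^\Gamma$. The surface $\M_d^\Gamma$ is irreducible of dimension $2$, since the map $(\theta,k)\mapsto[m_0:\cdots:m_d]$ is birational onto its image: one recovers $\theta=m_2/m_1-m_1/m_0$ and then $k=(m_1/m_0)/\theta$.

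Next I would pin down the codimension and primeness. Since the entries are linear forms, the degeneracy locus $\V(J)$ has codimension at most the expected value $(3-2)(d-2)=d-2$; the stratification above, together with the fact that the rank-$\le1$ locus is finite (see below), shows $\dim\V(J)=2$, so the codimension is \emph{exactly} $d-2$. Therefore the Eagon--Northcott complex resolves $J$, the quotient ring is Cohen--Macaulay, and $J$ is generated by the displayed minors with no embedded primes and no lower-degree generators. Cohen--Macaulayness makes $\V(J)$ equidimensional of dimension $2$, so the generic point of every component has rank exactly $2$ and hence lies on $\M_d^\Gamma$; thus $\V(J)=\overline{\M_d^\Gamma}$ is irreducible. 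Being generically reduced (as $\M_d^\Gamma$ is a variety cut out transversally at its smooth points) and satisfying Serre's condition $(S_1)$ from Cohen--Macaulayness, $J$ is reduced, and irreducible plus reduced gives that $J$ is the prime ideal $\I(\M_d^\Gamma)$. I expect primeness to be the main obstacle: the matrix is far from generic — note that its vanishing top-left entry already makes it fail to be $1$-generic, so Eisenbud's theorem on linear determinantal ideals does not apply — and the entire argument hinges on first securing that $\V(J)$ has the \emph{expected} codimension $d-2$ and that its generic point is a moment point.

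With the expected codimension in hand, the degree is computed by the Giambelli--Thom--Porteous formula for the rank-$\le2$ locus of a $3\times d$ matrix of linear forms; the universal Chern-class computation gives $\prod_{i=0}^{d-3}\tfrac{i+3}{i+1}=\binom{d}{2}$, independently of the special entries as long as the codimension is expected. Alternatively, one may construct a birational resolution $\tilde\phi\from\calS_d\to\M_d^\Gamma$ by the blowup technique of \Cref{sec:inverse_gaussian} and compute $H^2=\binom{d}{2}$ directly, where $H$ is the pulled-back hyperplane class.

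Finally, for the singular locus I would analyze the rank-$\le1$ locus of the matrix: here the middle and bottom rows are proportional, forcing $x_j=\lambda x_{j+1}$ and hence a geometric sequence $[1:t:\cdots:t^d]$, and proportionality of the top row $(0,x_1,2x_2,\ldots)$ then kills every $t$ except $t=0$ and $t=\infty$, leaving exactly the two points $[1:0:\cdots:0]$ and $[0:\cdots:0:1]$. One checks these are genuine singularities by a tangent-space computation (the embedded tangent space jumps above dimension $2$), while at rank-$2$ points the surface is smooth, either by the determinantal smoothness criterion or because $\tilde\phi$ is a local isomorphism over the complement of these two points. Once primeness and the expected codimension are secured in the second step, this singular-locus computation and the Porteous degree follow with only routine verifications.
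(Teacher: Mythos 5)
This statement is imported verbatim from \S4 of \cite{HSY24}; the present paper does not prove it, so there is no in-paper argument to compare against. Judged on its own terms, your skeleton is the standard one for such results, and much of it is sound: the row relation (bottom row $=\theta k\cdot$ middle $+\;\theta\cdot$ top) correctly gives $\M_d^\Gamma\subseteq\V(J)$; the upper bound $\dim\V(J)\le 2$ via the pencil of row relations, combined with the generic height bound, does give the expected codimension $d-2$ and hence Cohen--Macaulayness via Eagon--Northcott; Porteous then gives $\prod_{i=0}^{d-3}\tfrac{i+3}{i+1}=\binom{d}{2}$ for the degree of the cycle; and your identification of the rank-$\le 1$ locus as the two coordinate points, where every $3\times 3$ minor has vanishing differential, correctly shows those two points are singular.

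Two steps, however, are asserted exactly where the real work lies. First, primeness: you invoke generic reducedness ``as $\M_d^\Gamma$ is cut out transversally at its smooth points,'' but that transversality is precisely what must be proved (e.g.\ by exhibiting one explicit point where the Jacobian of the minors has rank $d-2$); Cohen--Macaulayness gives $(S_1)$, but without $(R_0)$ reducedness, and hence $J=\I(\M_d^\Gamma)$, does not follow. Relatedly, your set-theoretic converse only treats row relations with $c_0c_2\neq 0$; the degenerate relations produce the rational normal curve and $d$ lines through $[0:\cdots:0:1]$ inside $\V(J)$, and one must check these lie in the closure of the moment parametrization (they do, but only by a limit computation, not by the recursion). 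Second, the singular locus: there is no ``determinantal smoothness criterion'' applicable here --- as you yourself note, the matrix is not $1$-generic --- and for special matrices the singular locus of a rank stratum can strictly contain the next stratum (the inverse Gaussian moment variety, singular along an entire line, is exactly such an example, also determinantal of expected codimension). The fallback claim that $\tilde{\phi}$ is a local isomorphism away from the two points is not implied by birationality (a birational morphism from a smooth surface can contract curves, or be a bijection onto a non-normal surface) and is essentially equivalent to what is being proved. Smoothness at every rank-$2$ point, in particular along the $d$ lines through $[0:\cdots:0:1]$, requires an explicit Jacobian or Schur-complement computation that the sketch does not supply.
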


As in \Cref{sec:inverse_gaussian}, the moment variety can be parametrized by homogenizing the moments, to obtain the rational map
\[\PP^d\dashrightarrow\M_d^\Gamma,\quad [\theta,k,s]\mapsto \big[s^{2(d-1)}\theta^r\prod_{i=0}^{r-1}(is+k):r=0,\ldots,r-1\big].\]
However, it turns out that resolving the indeterminacy locus becomes simpler after the coordinate change $x=s^2$, $y=\theta s$ and $z=\theta k$. In the new coordinates, we have the parametrization
\[\phi\from \PP^2\dashrightarrow\M_d^\Gamma\subseteq\PP^d,\quad [x:y:z]\mapsto [f_0(x,y,z):\cdots:f_d(x,y,z)],\]
where the $r$th coordinate map is given by
\[f_r(x,y,z)=x^{d-r}\prod_{i=0}^{r-1}(z+iy).\]
In the language of \cite{ARS18}, these polynomials are the maximal minors of the Hilbert--Burch matrix
$$\begin{pmatrix}
    z & x & 0&0&\cdots&0&0\\
    0& y+z& x& 0& \cdots&0&0\\
    0&0&2y+z & x& \cdots&0&0\\
    \vdots&\vdots&\vdots&\vdots&\ddots&\vdots&\vdots\\
    0&0&0&0&\cdots& (d-1)y+z & x\\
\end{pmatrix}.$$

The parametrization $\phi\from\PP^2\dashrightarrow\M_d^\Gamma$ has finitely many indeterminacy points $P_i=[0:1:-i]$ for $i=0,\ldots,d-1$. 
We now proceed with the strategy outlined in \Cref{sec:strategy} by constructing a resolution $\pi\from\calS_d\to\PP^2$ of the indeterminacy locus.
We end up needing $\ell_i=d-i$ blowups over the $i$th indeterminacy point, which we prove in the following lemma. 

The expressions in the proof are quite involved, and we refer the reader to \Cref{eg:gamma blow up d=4} below for explicit formulations in the $d=4$ case. The intersection-theoretic consequences of the construction of $\calS_d$ that will be used in the rest of the section are gathered in \Cref{lem:strict_transforms_gamma}.

\begin{lemma}\label[lem]{lem:gamma_blowup}
    Fix $i\in\{0,\ldots,d-1\}$, and let $j\in\{1,\ldots,d-i\}$. Suppose we have blown up at $P_{i,0},\ldots,P_{i,j-1}$ to obtain the surface $\calS_{i,j}\to\PP^2$, along with the exceptional divisor $\E_{i,j}$ and lift $\phi_{i,j}\from\calS_{i,j}\dashrightarrow\mathcal{M}_d$ of $\phi\from\PP^2\dashrightarrow\mathcal{M}_d$.
    Let $f_\gen=\sum_{k=0}^d a_kf_k$ be a linear combination of the coordinate functions $f_k$ for $\phi$, with general coefficients $(a_0,\ldots,a_d)\in\CC^{d+1}$.
    Then the following hold:
    \begin{enumerate}
        \item Let $j=1$. If $i<d-1$, then the lift $\phi_{i,1}\from\calS_{i,1}\dashrightarrow\PP^d$ has a single new indeterminacy point $P_{i,1}$ on $\E_{i,1}$, which does not lie on the strict transform of the line $\V(x)$, and which is a point of multiplicity 1 on the strict transform of the curve $\V(f_\gen)$. If $i=d-1$, then there are no new indeterminacy points on $\E_{d-1,1}$.
        \item If $1<j\leq d-i-1$, then the lift $\phi_{i,j}\from\calS_{i,j}\dashrightarrow\PP^d$ has a single new indeterminacy point $P_{i,j}$ on $\E_{i,j}$, which does not lie on the strict transform of $\E_{i,j-1}$, and which is a point of multiplicity one on the strict transform of the curve $\V(f_\gen)$.
        \item If $i<d-1$ and $j=d-i$, then the lift $\phi_{i,d-i}\from\calS_{i,d-i}\dashrightarrow\PP^d$ has no new indeterminacy points on $\E_{i,d-i}$.
    \end{enumerate}
\end{lemma}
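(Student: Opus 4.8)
The plan is to fix a base point $P_i$ and analyze the whole chain of blowups over it by induction on $j$, working throughout in the affine chart $\{y=1\}$ with coordinates $(x,z)$, translated via $w=z+i$ so that $P_i$ is the origin. In these coordinates the parametrization reads $f_r=x^{d-r}g_r(w)$ with $g_r(w)=\prod_{l=0}^{r-1}\bigl(w+(l-i)\bigr)$, and the decisive structural observation is the dichotomy: $g_r(0)\neq 0$ for $r\le i$, whereas for $r\ge i+1$ the factor indexed by $l=i$ equals $w$, so that $g_r(w)=w\,h_r(w)$ with $h_r(0)\neq 0$. Hence $f_r$ vanishes at $P_i$ to order $d-r$ when $r\le i$ and to order $d-r+1$ when $r\ge i+1$. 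In particular $\V(f_\gen)$ is smooth at $P_i$: for $i<d-1$ its unique lowest-order term is the contribution $a_d h_d(0)\,w$ from $f_d$, so the tangent direction is $\{w=0\}$, while for the boundary case $i=d-1$ the forms $f_{d-1}$ and $f_d$ produce a generic tangent.

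First I would set up the distinguished chart carrying the chain of infinitely near base points: after $j$ blowups it has coordinates $(x,t)$ with $w=x^{j}t$ (for $j=1$ this is $w=xt$, with $\V(x)$ playing the role of the preceding exceptional divisor), and $\E_{i,j}=\{x=0\}$. Substituting $w=x^{j}t$ and using the factorization gives $f_r=x^{d-r}\bigl(g_r(0)+O(x)\bigr)$ for $r\le i$ and $f_r=x^{d-r+j}\,t\,\bigl(h_r(0)+O(x)\bigr)$ for $r\ge i+1$, so the common factor removed in forming the lift $\phi_{i,j}$ is $x^{\min(d-i,\,j)}$. Restricting the resulting coordinate functions to $\E_{i,j}$ is then immediate: when $j<d-i$ every function vanishes there except $f_d/x^{j}$, whose restriction is the nonzero multiple $h_d(0)\,t$, yielding a single new indeterminacy point $P_{i,j}=\{x=t=0\}$; when $j=d-i$ the function $f_i/x^{d-i}$ restricts to the nonzero constant $g_i(0)$, so the coordinate functions have no common zero on $\E_{i,d-i}$ and the indeterminacy is resolved. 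This accounts for part (2), part (3), and both clauses of part (1) (the case $i<d-1$ falling under $j<d-i$ with $j=1$, and the case $i=d-1$ under $j=d-i$).

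Next I would rule out stray base points in the complementary chart of each blowup, which has coordinates $(t',u)$ with $x=t'u$ and $w=t'^{\,j}u^{\,j-1}$. The same substitution shows that after removing the common factor, $f_d$ restricts along $\E_{i,j}=\{t'=0\}$ to the nonzero constant $h_d(0)$, so this chart contains no indeterminacy point. Since the strict transform of $\E_{i,j-1}$ (respectively of $\V(x)$ when $j=1$) meets $\E_{i,j}$ at the point $[u:v]=[0:1]$, which lies in this complementary chart, whereas the new base point sits at $[u:v]=[1:0]$, the two are distinct; this gives the transversality assertions that $P_{i,j}$ does not lie on the strict transform of $\E_{i,j-1}$ and that $P_{i,1}$ does not lie on $\V(x)$. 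Finally, the multiplicity-one statements follow formally: each $P_{i,j}$ is a base point of the full linear system and therefore lies on the strict transform of every member $\V(f_\gen)$, and since that member is smooth at $P_i$ its strict transform remains smooth under the successive blowups of points lying on it, so its multiplicity at each $P_{i,j}$ equals $1$.

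The main obstacle is bookkeeping rather than conceptual: one must track exactly which exceptional divisors are visible in each affine chart and what power of $x$ (or $t'$) has already been divided out, so that the step "remove the common factor $x^{\min(d-i,j)}$" is justified uniformly along the chain and at the boundary indices $j=1$ and $j=d-i$. I would handle this by letting the induction hypothesis record the precise local form of $\phi_{i,j}$ in the distinguished chart, namely the two families $x^{d-r}(\mathrm{unit})$ and $x^{d-r+j}\,t\,(\mathrm{unit})$, so that each blowup reproduces the same shape with $j$ increased by one; the explicit $d=4$ computation in \Cref{eg:gamma blow up d=4} then serves as a concrete template for this local analysis.
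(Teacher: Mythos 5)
Your proposal is correct and takes essentially the same approach as the paper: the same chain of charts with $w=x^jt$ (resp.\ $x=t'u$, $w=t'^ju^{j-1}$), the same dichotomy between $r\le i$ and $r\ge i+1$, and the same identification of the unique base point at $[1:0]$ on each $\E_{i,j}$. The only differences are presentational — you expose the uniform structure $f_r=x^{d-r}g_r(w)$ up front instead of recomputing the local forms at each inductive step, and you deduce the multiplicity-one claims from smoothness of $\V(f_\gen)$ at $P_i$ rather than reading off lowest-degree terms — but the substance matches the paper's proof.
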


\begin{proof}
\textbf{Part (1):} Let $j=1$. Consider the affine chart $\PP^2\cap\{y=1\}\cong\AA^2_{(x,z)}$. Blowing up at $P_{i,0}$ corresponds to blowing up at $(0,-i)$ in this chart. The resulting blowup is given by
\[\calB_{i,1}=\{((x,z),[u_1\mathbin{:}v_1])\in\AA^2\times\PP^1:xv_1=(z+i)u_1\},\]
with blowup morphism given by projection onto $\AA^2$. Recall that $\phi_{i,1}\from\calB_{i,1}\dashrightarrow\PP^d$ denotes the lift of $\phi$.
In $\calB_{i,1}$, the exceptional divisor $\E_{i,1}$ is given by $\{(0,-i)\}\times\PP^1$,
and the strict transform of the line $\V(x)$ is given by $\{((0,z),[0\mathbin{:}1]):z\in\CC\}$.

Consider the affine chart $\calB_{i,1}\cap\{u_1=1\}\cong\AA^2_{(x,v_1)}$, where the isomorphism is given by $(x,z)\mapsto (x,xv_1-i)$. 
The restriction to $\AA^2_{(x,v_1)}\dashrightarrow\PP^d$ of $\phi_{i,1}$, after factoring out the common factor $x$, is given by
\begin{align*}
(x,v_1)\mapsto \left[\begin{array}{l}x^{d-1}:x^{d-2}(xv_1-i):\cdots\\[0.2em]
x^{d-k-1}(xv_1-i)(xv_1-i+1)\cdots(xv_1-i+(k-1)):\cdots\\[0.2em]
v_1(xv_1-i)(xv_1-i+1)\cdots(xv_1-1)(xv_1+1)\cdots(xv_1+d-i-1)\end{array}\right].
\end{align*}
The first $d-1$ coordinate functions of $\phi_{i,1}$ (so $k\leq d-2$) always have $x$ as a factor, and the last coordinate function always has $v_1$ as a factor. If $i=d-1$, the $d$th function, i.e., when $k=d-1$, is not divisible by either $x$ or by $v_1$. Then $\phi_{d-1,1}$ has no indeterminacy points. When $i\neq d-1$, the $d$th function also has $v_1$ as a factor, so there is a indeterminacy point $P_{i,1}=((0,-i),[1:0])$ on $\E_{i,1}$. 

Since the smallest-degree monomial in $x$ and $v_1$ in the last coordinate function is linear, it follows that $P_{i,1}$ is a point of multiplicity one on the strict transform of the curve $\V(f_\gen)$.

Now consider the affine chart $\calB_{i,1}\cap\{v_1=1\}\cong\AA^2_{(z,u_1)}$, where the isomorphism is given by $(x,z)\mapsto ((z+i)u_1,z)$. Then the restriction to $\AA^2_{(z,u_1)}\dashrightarrow\PP^d$ of $\phi_{i,1}$, after factoring out the common factor $(z+i)$,  is given by
\begin{align*}
(z,u_1)\mapsto&\left[\begin{array}{l}(z+i)^{d-1}u_1^d:\cdots\\[0.2em]
(z+i)^{d-k-1}u_1^{d-k}z(z+1)\cdots(z+(k-1)):\cdots\\[0.2em]
z(z+1)\cdots(z+(i-1))(z+(i+1))\cdots (z+(d-1))\end{array}\right],    
\end{align*}
which has no indeterminacy points on the exceptional divisor $\E_{i,1}\cap\{v_1=1\}$.
The indeterminacy points $((0,-i'),[0:1])$, for $i'\in\{0,\ldots,d-1\}\setminus\{i\}$, do not lie on the exceptional divisor $\E_{i,1}$, and are simply the preimages of the original indeterminacy points $P_{i',0}$.  
Thus, claim (1) holds.

\smallskip
\noindent
\textbf{Part (2) and (3):} Suppose $j\in\{2,\ldots,d-i-1\}$, and that we have blown up at $P_{i,0},\ldots,P_{i,j-2}$ to obtain $\calS_{i,j-1}\to\PP^2$, and lift $\phi_{i,j-1}\from \calS_{i,j-1}\dashrightarrow\PP^d$ of $\phi$. Let $\calB_{i,j-1}\subseteq\AA^2\times\PP^1$ denote the local chart of the blowup around $P_{i,j-1}$ from the previous step of the construction.

By induction, blowing up at $P_{i,j-1}$ corresponds to blowing up at the origin $(0,0)$ in the chart $\calB_{i,j-1}\cap\{u_{j-1}=1\}\cong\AA^2_{(x,v_{j-1})}$. In these coordinates, the resulting blowup $\calS_{i,j}$ is the closure of $\calB_{i,j}\subseteq\calS_{i,j-1}\times \PP^1$, where
\[\calB_{i,j}=\{((x,v_{j-1}),[u_{j}:v_{j}])\in\AA^2\times\PP^1:xv_{j}=v_{j-1}u_{j}\}.\]
The exceptional divisor $\E_{i,j}\subseteq\calB_{i,j}$ is $\{(0,0)\}\times\PP^1$. The strict transform of the previous exceptional divisor $\E_{i,j-1}$ is locally given by $\{((0,v_{j-1}),[0:1]):v_{j-1}\in\CC\}$.

Consider the affine chart $\calB_{i,j}\cap\{u_{j}=1\}=\AA^2_{(x,v_j)}$. Here, the lift $\phi_{i,j}\from\calS_{i,j}\dashrightarrow\PP^d$ of $\phi_{i,j-1}$, after substituting the appropriate coordinates and factoring $x$ from all coordinate functions, is given by $\phi_{i,j}(x,v_j)=[g_0:\cdots:g_d]$, where
\[g_k(x,v_j)=\begin{cases}
    x^{d-j-k}\prod_{-i\leq\ell\leq k-i-1}(x^{j}v_j+\ell)\quad& 0\leq k\leq i,\\\\
    x^{d-k}v_j\prod_{\substack{-i\leq \ell\leq k-i-1,\\\ell\neq 0}}(x^{j}v_j+\ell)\quad &i+1\leq k\leq d.
\end{cases}\]

If $i>0$ and $j=d-i$, i.e., $i=d-j$, then the $g_{d-j}$ coordinate function is of the form
\begin{align*}
    g_{d-j}(x,v_j)&=(x^jv_j-i)(x^jv_j-(i-1)\cdots (x^jv_j-i+(d-j-1))\\
    &=(x^jv_j-i)(x^jv_j-(i-1))\cdots(x^jv_j-1),
\end{align*}
which is not divisible by either $x$ or $v_j$. However, note that $g_0=x^{d-(d-i)}=x^i$, and
\[g_d=v_j\prod_{\substack{-i\leq \ell\leq d-i-1,\\\ell\neq 0}}(x^{j+1}v_j+\ell).\]
Thus, there are no indeterminacy points of $\phi_{i,j}$ in this case. If $i=0$ and $j=d$, then we have that $g_0=1$, and so there are no indeterminacy points in this case either.

If $2\leq j\leq d-i-1$, then there is a indeterminacy point $P_{i,j}=((0,0),[1:0])$ on $\E_{i,j}$, as the coordinate functions $g_0,\ldots,g_{d-j}$ are divisible by $x$, while the functions $g_{d-j},\ldots,g_d$ are divisible by $v_j$. Notice that $P_{i,j}$ does not lie on the strict transform of $\E_{i,j-1}$. Since the monomial in $x$ and $v_j$ of smallest degree of the last coordinate function $g_d$ is linear, it follows that $P_{i,j}$ is a point of multiplicity one on the strict transform of $\V(f_\gen)$ for a generic linear combination $f_\gen$ of the coordinate functions.

Now consider the affine chart $\calB_{i,j}\cap\{v_j=1\}=\AA^2_{(v_{j-1},u_j)}$. The lift $\phi_{i,j}$, described in local coordinates by making the appropriate substitutions and factoring $v_{j-1}$ from each of the coordinate functions, is given by $(v_{j-1},u_j)\mapsto[h_0:\cdots:h_d]$, where
\[h_k(v_{j-1},u_j)=\begin{cases}
    v_{j-1}^{d-j-k}u_j^{d-j-k+1}\prod_{-i\leq\ell\leq k-i-1}(v_{j-1}^{j}u_j^{j-1}+\ell)\quad& 0\leq k\leq i,\\\\
    (v_{j-1}u_j)^{d-k}\prod_{\substack{-i\leq \ell\leq k-i-1,\\\ell\neq 0}}(v_{j-1}^{j}u_j^{j-1}+\ell)\quad &i+1\leq k\leq d.
\end{cases}\]
Then, $h_0$ is a monomial in $v_{j-1}$ and $u_j$, while $h_d$ is not divisible by either variable. Therefore, there are no indeterminacy points of $\phi_{i,j}$ in this affine chart. This concludes the proof of the  claims (2) and (3) of the lemma.
\end{proof}

\begin{example}\label[ex]{eg:gamma blow up d=4}
For $d=4$, the rational map $\phi\from\PP^2\dashrightarrow\PP^4$ is given by
\[[x:y:z]\mapsto [x^{4} : x^3 z : x^2 z(z+y) : x z(z+y)(z+2y) : z(z+y)(z+2y)(z+3y)]\]
with indeterminacy points $P_0=[0:1:0]$, $P_1=[0:1:-1]$, $P_2=[0:1:-2]$ and $P_3=[0:1:-3]$. 
We will now demonstrate the proof of \Cref{lem:gamma_blowup} by resolving the singularity at $P_1$.

\smallskip
\noindent
\textbf{Blowing up at $P_1$:} We blow up at $(0,-1)$ in the chart $\PP^2\cap\{y\neq 0\}$ (isomorphic to $\AA^2$ via $(x,z)\mapsto [x:1:z]$), to obtain
\[\calB_{1,1}=\{((x,z),[u_1:v_1])\in\AA^2\times\PP^1:xv_1=(z+1)u_1\}.\]

The exceptional divisor is given by $\E_{1,1}=\{(0,-1)\}\times\PP^1$, and the strict transform of $\V(x)$ is given by $\{((0,z),[0:1]):z\in\AA^1\}$. In the $\calB_{1,1}\cap\{u_1\neq 0\}$ chart, which is isomorphic to $\AA^2$ via $(x,v_1)\mapsto ((x,xv_1-1),[1:v_1])$, the lift $\phi_{1,1}$ is generically given by
\[(x,v_1)\mapsto\left[\begin{array}{l}
x^{3}:
\\
 x^{2} \left(x v_{1}-1\right): 
\\
 x^{2} \left(x v_{1}-1\right) v_{1} :
\\
 x \left(x v_{1}-1\right) v_{1} \left(x v_{1}+1\right):
\\
 \left(x v_{1}-1\right) v_{1} \left(x v_{1}+1\right) \left(x v_{1}+2\right) 
\end{array}\right].\]
There is a indeterminacy point $(0,0)\in\AA^2$, which corresponds to $((0,-1),[1:0])\in\E_{1,1}$. On the other hand, in the chart $\calB_{1,1}\cap\{v_1\neq 0\}$, the lift $\phi_{1,1}$ is given by
\[(z,u_1)\mapsto \left[\begin{array}{l}
\left(z +1\right)^{3} u_{1}^{4} : \\
 \left(z +1\right)^{2} u_{1}^{3} z  :\\
 \left(z +1\right) u_{1}^{2} z \left(z +1\right)  :\\
 u_{1} z \left(z +1\right) \left(z +2\right) :\\
 z \left(z +2\right) \left(z +3\right) 
\end{array}\right]
\]
with no further indeterminacy points on $\E_{1,1}$. 

\smallskip
\noindent
\textbf{Blowing up at $P_{1,1}$:} We now construct $\calS_{1,2}$ by blowing up $\calB_{1,1}\cap\{u_1\neq 0\}\cong\AA^2$ in the origin, which gives 
\[\calB_{1,2}=\{((x,v_1),[u_2:v_2]\in\AA^2\times\PP^1:xv_2=v_1u_2\}.\]
The exceptional divisor is $\E_{1,2}=\{(0,0)\}\times\PP^1$, and the strict transform of $\E_{1,1}$ is $\{((0,v_1),[0:1]):v_1\in\AA^1\}$. In the chart $\calB_{1,2}\cap\{u_2\neq 0\}$, the lift $\phi_{1,2}$ is given by
\[(x,v_2)\mapsto\left[\begin{array}{l}
x^{2} : \\
 x (x^{2}v_{2}-1) : \\
 x^{2} (x^{2}v_{2}-1) v_{2} : \\
 x (x^{2}v_{2}-1) v_{2} (x^{2}v_{2}+1) : \\
 (x^{2}v_{2}-1) v_{2} (x^{2}v_{2}+1) (x^{2}v_{2}+2) 
\end{array}\right]
\]
with a new indeterminacy point $(0,0)\in\AA^2$ that corresponds to $((0,0),[1:0])\in\E_{1,2}$. On the other hand, in the chart $\calB_{1,2}\cap\{v_2\neq 0\}$, the lift is given by
\[(z,v_2)\mapsto \left[\begin{array}{l}
u_{2}^{3} v_{1}^{2} : \\
 v_{1} u_{2}^{2} (v_{1}^{2} u_{2}-1) :\\
 v_{1}^{2} u_{2}^{2} (v_{1}^{2} u_{2}-1) :\\
 v_{1} u_{2} (v_{1}^{2} u_{2}-1) (v_{1}^{2} u_{2}+1) : \\
 (v_{1}^{2} u_{2}-1) (v_{1}^{2} u_{2}+1) (v_{1}^{2} u_{2}+2) 
\end{array}\right]
\]
and lacks further indeterminacy points.

\smallskip
\noindent
\textbf{Blowing up at $P_{1,2}$:} We now construct $\calS_{1,3}$ by blowing up $\calB_{1,2}\cap\{u_2\neq 0\}\cong\AA^2$ in the origin, which gives 
\[\calB_{1,3}=\{((x,v_2),[u_3:v_3]\in\AA^2\times\PP^1:xv_3=v_2u_3\}.\]
The exceptional divisor is $\E_{1,3}=\{(0,0)\}\times\PP^1$, and the strict transform of $\E_{1,2}$ is $\{((0,v_2),[0:1]):v_2\in\AA^1\}$. In the chart $\calB_{1,3}\cap\{u_3\neq 0\}$, the lift $\phi_{1,3}$ is given by
\[(x,v_3)\mapsto\left[\begin{array}{l}
x : \\
 x^{3} v_{3}-1 : \\
 x^{2} (x^{3} v_{3}-1) v_{3} :\\
 x (x^{3} v_{3}-1) v_{3} (x^{3} v_{3}+1) :\\
 (x^{3} v_{3}-1) v_{3} (x^{3} v_{3}+1) (x^{3} v_{3}+2) 
\end{array}\right]
\]
and lacks further indeterminacy points. Similarly, in the chart $\calB_{1,3}\cap\{v_3\neq 0\}$, the lift is given by
\[(z,v_3)\mapsto \left[\begin{array}{l}
v_{2} u_{3}^{2} : \\
 u_{3} (u_{3}^{2} v_{2}^{3}-1) : \\
 v_{2}^{2} u_{3}^{2} (u_{3}^{2} v_{2}^{3}-1) : \\
 v_{2} u_{3} (u_{3}^{2} v_{2}^{3}-1) (u_{3}^{2} v_{2}^{3}+1) : \\
 (u_{3}^{2} v_{2}^{3}-1) (u_{3}^{2} v_{2}^{3}+1) (u_{3}^{2} v_{2}^{3}+2) 
\end{array}\right]\]
and lacks further indeterminacy points.
\end{example}

The following intersection-theoretic formulas follow directly from \Cref{lem:gamma_blowup} and \eqref{eq:class_of_strict_transform}.

\begin{lemma}\label[lem]{lem:strict_transforms_gamma}
Let $E_{i,j}$ denote the class in $\Pic(\calS_d)$ of the pullback of $\E_{i,j}$ to $\calS_d$ along the composition of appropriate blowup maps, and let $L$ denote the class of the pullback $\mathcal{L}$ of a line in $\PP^2$. The following formulas hold:
\begin{enumerate}
    \item The class of the strict transform of the line $\V(x)$ is given by $L-\sum_{i=0}^{d-1}E_{i,1}$.
    \item The class of the strict transform of $\E_{i,j}$ is given by $E_{i,j}-E_{i,j+1}$ for $i\in\{0,\ldots,d-1\}$ and $j\in\{1,\ldots,d-i-1\}$.
    \item The class of the strict transform of $\E_{i,d-i}$ is given by $E_{i,d-i}$ for $i\in\{0,\ldots,d-1\}$.
    \item Let $H$ be the class of the strict transform $\mathcal{H}$ of $\V(f_\gen)\subseteq\PP^2$ for a generic linear combination $f_\gen$ of the coordinate functions of $\phi$. Then
    \begin{equation}\label{eq:H_class_gamma}
    H=dL-\sum_{i=0}^{d-1} \sum_{j=1}^{d-i} E_{i,j}.
    \end{equation}
\end{enumerate}
\end{lemma}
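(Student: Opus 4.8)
The plan is to obtain all four formulas by feeding the multiplicity and incidence data recorded in \Cref{lem:gamma_blowup} into the general strict-transform formula \eqref{eq:class_of_strict_transform} (for the two curves coming from $\PP^2$) and into the elementary formula for the strict transform of a smooth curve under a single point blowup (for the two families of exceptional divisors). Since the indeterminacy points $P_0,\ldots,P_{d-1}$ are distinct points of $\PP^2$ and blowing up a point is an isomorphism away from it, the tower of blowups over $P_i$ does not affect any divisor supported over $P_{i'}$ with $i'\neq i$, so each computation reduces to tracking what happens over a single $P_i$.

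For parts (1) and (4) I would apply \eqref{eq:class_of_strict_transform} directly. The line $\V(x)$ has degree $1$ and passes through each $P_i=[0\mathbin{:}1\mathbin{:}-i]$ with multiplicity one; by part (1) of \Cref{lem:gamma_blowup} its strict transform meets $\E_{i,1}$ at a point distinct from the new center $P_{i,1}$, hence is disjoint from the entire tower of subsequent centers $P_{i,1},P_{i,2},\ldots$ lying over $P_{i,1}$. Thus only the coefficient of $E_{i,1}$ is nonzero for each $i$, and it equals $1$, giving $L-\sum_{i=0}^{d-1}E_{i,1}$. For $\V(f_\gen)$, each coordinate function $f_r=x^{d-r}\prod_{\ell=0}^{r-1}(z+\ell y)$ has total degree $d$, so the curve has degree $d$; and parts (1)--(2) of \Cref{lem:gamma_blowup} state that each intermediate center $P_{i,j}$ (for $j\geq 1$) is a smooth point of the strict transform, so the multiplicity $m_{i,j+1}$ equals $1$. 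Combining this with the multiplicity at the original base point $P_i$ (discussed below) gives $m_{i,j}=1$ for all admissible $(i,j)$, and \eqref{eq:class_of_strict_transform} yields $H=dL-\sum_{i=0}^{d-1}\sum_{j=1}^{d-i}E_{i,j}$.

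For parts (2) and (3) I would instead argue locally. The divisor $\E_{i,j}$ is a smooth rational curve, and by construction the only subsequent center lying on it is $P_{i,j}$, which is a smooth point. Blowing up a smooth point of a smooth curve replaces its class by the pullback minus the new exceptional class, so the strict transform of $\E_{i,j}$ acquires class $E_{i,j}-E_{i,j+1}$; part (2) of \Cref{lem:gamma_blowup} guarantees that no later center meets this strict transform, so the class is final, proving (2) for $1\leq j\leq d-i-1$. For the top divisor $\E_{i,d-i}$, part (3) of \Cref{lem:gamma_blowup} says the construction stops with no center on it, so it is never modified and its strict transform has class $E_{i,d-i}$, proving (3).

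The one step not read off verbatim from \Cref{lem:gamma_blowup} is the multiplicity $m_{i,1}$ of $\V(f_\gen)$ at the original base point $P_i$ itself, and this is where the only genuine computation lies. Working in the chart $\{y=1\}$ with local coordinate $w=z+i$ at $P_i$, one checks that $f_r$ vanishes to order $d-r$ for $r\leq i$ and to order $d-r+1$ for $r\geq i+1$; the minimum of these orders is $1$ (realized by $f_d$, and also by $f_{d-1}$ when $i=d-1$), and the corresponding lowest-order parts are linearly independent forms, so a generic linear combination vanishes to order exactly $1$ and $m_{i,1}=1$. Note that this multiplicity being $1$ is consistent with needing $d-i$ blowups: those blowups resolve the indeterminacy of the map $\phi$, not a singularity of the generic curve. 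With this in hand, all coefficients in part (4) are confirmed to equal $1$, completing the proof.
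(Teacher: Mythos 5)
Your proof is correct and follows essentially the same route as the paper, which simply asserts that the formulas ``follow directly'' from \Cref{lem:gamma_blowup} and \eqref{eq:class_of_strict_transform}. You rightly identify the one ingredient not stated verbatim in \Cref{lem:gamma_blowup} --- that the multiplicity of $\V(f_\gen)$ at the original base point $P_i$ equals $1$ --- and your local computation of the vanishing orders of the $f_r$ at $P_i$ (order $d-r$ for $r\leq i$, order $d-r+1$ for $r\geq i+1$, with minimum $1$ attained by $f_d$) correctly supplies it, consistent with the single factor of $x$ extracted in the paper's proof of \Cref{lem:gamma_blowup}.
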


As in the previous section, \Cref{lem:gamma_blowup,lem:strict_transforms_gamma} imply the following proposition.

\begin{proposition}
    The map $\tilde{\phi}\from \calS_d\to\M^\Gamma_d$ is a birational morphism. 
\end{proposition}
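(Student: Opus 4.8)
The plan is to mirror exactly the argument given for the inverse Gaussian case in the analogous proposition, since the statement and the setup are completely parallel. The goal is to show that $\tilde{\phi}\from\calS_d\to\M_d^\Gamma$ is birational, and birationality will follow from two facts: that $\tilde{\phi}$ is a morphism (no indeterminacy), and that it has degree one. First I would invoke \Cref{lem:gamma_blowup} to conclude that $\tilde{\phi}=\phi_{d-1,1}$ (or more precisely the final lift $\phi_{r,\ell_r}$ after all blowups) has no remaining indeterminacy points; parts (1), (2), and (3) of that lemma collectively show that after performing $\ell_i=d-i$ blowups over each indeterminacy point $P_i$, there are no new indeterminacy points on the final exceptional divisors, so $\tilde{\phi}$ is a genuine morphism.

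The degree-one claim will come from a self-intersection computation of the hyperplane class $H$. Using the expression for $H$ in \eqref{eq:H_class_gamma}, namely $H=dL-\sum_{i=0}^{d-1}\sum_{j=1}^{d-i}E_{i,j}$, together with the diagonal intersection form $L^2=1$ and $E_{i,j}^2=-1$ from \eqref{eq:picard_grup_intersections}, I would compute
\[
H^2=d^2-\sum_{i=0}^{d-1}\sum_{j=1}^{d-i}1=d^2-\sum_{i=0}^{d-1}(d-i)=d^2-\frac{d(d+1)}{2}=\binom{d}{2}.
\]
By \Cref{thm:gamma_results_from_old_paper}, $\deg(\M_d^\Gamma)=\binom{d}{2}$, so $H^2=\deg(\M_d^\Gamma)$. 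Since $H$ is the pullback along $\tilde{\phi}$ of a hyperplane section of $\M_d^\Gamma\subseteq\PP^d$, we have the general projection-formula identity $H^2=\deg(\M_d^\Gamma)\cdot\deg(\tilde{\phi})$, and comparing these forces $\deg(\tilde{\phi})=1$.

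Finally, a dominant morphism of degree one between varieties of the same dimension is birational, which completes the proof. I do not expect any genuine obstacle here: the only nontrivial ingredients (resolution of indeterminacy and the explicit class of $H$) are already established in \Cref{lem:gamma_blowup} and \Cref{lem:strict_transforms_gamma}, and the degree formula $\deg(\M_d^\Gamma)=\binom{d}{2}$ is supplied by \Cref{thm:gamma_results_from_old_paper}. The one point requiring mild care is verifying the arithmetic of the sum $\sum_{i=0}^{d-1}(d-i)=\binom{d+1}{2}$ so that $H^2$ correctly collapses to $\binom{d}{2}$; this is the mirror of the $H^2=(d-1)^2$ computation in the inverse Gaussian case, and is purely a routine bookkeeping check.
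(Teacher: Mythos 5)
Your proposal is correct and follows essentially the same route as the paper's own proof: morphism-ness from \Cref{lem:gamma_blowup}, the computation $H^2=d^2-\tfrac{d(d+1)}{2}=\binom{d}{2}=\deg(\M_d^\Gamma)$ via \Cref{lem:strict_transforms_gamma} and \Cref{thm:gamma_results_from_old_paper}, and the identity $H^2=\deg(\M_d^\Gamma)\deg(\tilde{\phi})$ to force $\deg(\tilde{\phi})=1$. The arithmetic checks out, so there is nothing to add.
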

\begin{proof}
    On the one hand, by \Cref{lem:gamma_blowup}, the map $\tilde{\phi}:=\phi_{d-1,1}$ has no indeterminacy points, i.e., it is a morphism.
 On the other hand, by 
 \Cref{lem:strict_transforms_gamma}, we have that
\[H^2=d^2-d-(d-1)-\cdots-1=\binom{d}{2}=\deg(\M_d^{\Gamma}),\]
where the last equality follows from \Cref{thm:gamma_results_from_old_paper}. 
Since $H$ is the pullback of a hyperplane section of $\M_d^{\Gamma}$ along $\tilde{\phi}$, we have that  $H^2=\deg(\M_d^\Gamma)\deg(\tilde{\phi})$. Therefore, we conclude that $\deg(\tilde{\phi})=1$. Since $\tilde{\phi}$ is dominant, the desired result follows.
\end{proof}

We are now ready to state and prove the main theorem of this section.

\begin{theorem}\label{thm:gamma_nondef}
The moment variety $\M_d^\Gamma$ is $k$-nondefective for all $k\geq 2$ and $d\geq 2$.
\end{theorem}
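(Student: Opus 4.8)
The plan is to mirror the proof of \Cref{thm:IG_nondef}, taking advantage of the fact that the resolution $\calS_d$, the pulled-back hyperplane class $H = dL - \sum_{i=0}^{d-1}\sum_{j=1}^{d-i} E_{i,j}$, and the relevant strict transforms are already recorded in \Cref{lem:gamma_blowup,lem:strict_transforms_gamma}. Fix $d \geq 2$ and suppose for contradiction that $\M_d^\Gamma$ is $k$-defective for some $k\geq 2$; by \Cref{lem:lower_bound_on_d_for_defective_surfaces} we may assume $d \leq 3k+2$. Case (1) of \Cref{thm:terracini} is excluded immediately, since it would force $\M_d^\Gamma$ to be smooth or singular at a single point, whereas \Cref{thm:gamma_results_from_old_paper} says its singular locus consists of two points. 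Hence case (2) holds, and \Cref{lem:divisors_Di_in_case_2} produces an effective divisor $\calA$ together with linearly equivalent, moving divisors $\calD_1, \dots, \calD_k$ of common class $D$ with $H = A + 2kD$. Choosing $\calH$ general so that the $\calD_j$ carry no exceptional components, we may write $D = aL - \sum_{i,j} m_{i,j} E_{i,j}$ with $a = L\cdot D \geq 1$ the common degree of the plane curves $\pi(\calD_j)$ and all $m_{i,j} \geq 0$.

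The first step is to bound $a$. Since $L$ is nef and $\calA$ is effective, $0 \leq L \cdot A = L\cdot H - 2k\,L\cdot D = d - 2ka$, so that $2ka \leq d \leq 3k+2$. For $k \geq 2$ this forces $a \leq 2$: if $a \geq 3$ then $6k \leq 3k + 2$, which is impossible, while $a = 2$ gives $4k \leq 3k+2$, hence $k = 2$ and $d = 8$, a situation already covered by \Cref{lem:base_cases}. The entire theorem therefore reduces to producing a contradiction in the case $a = 1$, which is where I expect the real content to lie.

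For $a = 1$ each $\pi(\calD_j)$ is a line $\ell_j = \V(\alpha_j x + \beta_j y + \gamma_j z)$, and because the $\calD_j$ pass through general points of $\calS_d$, and hence the $\ell_j$ through general points of $\PP^2$, none of the $\ell_j$ is the line $\V(x)$ containing all the indeterminacy points $P_i = [0:1:-i]$; in particular $(\beta_j,\gamma_j)\neq(0,0)$ for every $j$. Exactly as in \Cref{thm:IG_nondef}, effectivity of $\calA$ yields a generic linear combination $f_\gen = \sum_{r=0}^d a_r f_r$ and a nonzero homogeneous $h$ with
\[
f_\gen = h \cdot \prod_{j=1}^k (\alpha_j x + \beta_j y + \gamma_j z)^2.
\]
Since $f_r = x^{d-r}\prod_{i=0}^{r-1}(z + iy)$, restricting this identity to $\{x = 0\}$ kills every summand except the one with $r = d$ and leaves
\[
a_d \prod_{i=0}^{d-1}(z + iy) = h(0,y,z) \cdot \prod_{j=1}^k (\beta_j y + \gamma_j z)^2.
\]
The left-hand side is squarefree and nonzero: its factors $z + iy$ are pairwise distinct, and $a_d \neq 0$ because $f_\gen$ is generic. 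The right-hand side, however, either vanishes---forcing $h(0,y,z) = 0$, i.e.\ $x \mid f_\gen$, which contradicts $f_\gen|_{x=0} \neq 0$---or carries each nonzero linear form $\beta_j y + \gamma_j z$ to even multiplicity, and hence has a repeated factor. This contradicts squarefreeness and eliminates the case $a = 1$. Note that this argument is uniform in which base points the lines $\ell_j$ happen to pass through, so no further subdivision into subcases (as was needed for the inverse Gaussian) is required.

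I expect the intersection-theoretic bookkeeping here to be genuinely lighter than in the inverse Gaussian case, since $L \cdot H = d$ (rather than $2d-1$) keeps $a \leq 2$ with almost no casework. The one place demanding care is the step imported from \Cref{thm:IG_nondef}, namely the passage from effectivity of $\calA$ to the factorization identity with a genuinely generic $f_\gen$. The crucial payoff of that genericity is simply $a_d \neq 0$, after which the squarefreeness of $\prod_{i=0}^{d-1}(z+iy)$---guaranteed because the roots $-i$ are distinct---does all of the work. I would therefore concentrate on making the reduction to this identity airtight and on confirming that the restriction-to-$\{x=0\}$ trick is compatible with the multiplicity bookkeeping at the base points $P_i$.
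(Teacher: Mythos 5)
Your reduction to $a\le 2$ and your treatment of $a=2$ coincide with the paper's. For the crucial case $a=1$, however, you take a genuinely different and considerably shorter route. The paper splits $a=1$ into two subcases according to whether the class $D$ meets some exceptional divisor $E_{i,1}$: the first is handled by a divisibility argument on the monomial $x^{2k}$, and the second by a long sequence of component removals in $\Pic(\calS_d)$ culminating in a non-effective residual class. Your restriction-to-$\{x=0\}$ argument treats both subcases uniformly: since $f_r|_{x=0}=0$ for $r<d$ and $f_d|_{x=0}=\prod_{i=0}^{d-1}(z+iy)$ is squarefree, while $\prod_j(\beta_j y+\gamma_j z)^2$ is a nonzero square of positive degree (your observation that the $\ell_j$ pass through general points and hence differ from $\V(x)$ is exactly what guarantees $(\beta_j,\gamma_j)\neq(0,0)$), the factorization $f_\gen=h\prod_j g_j^2$ cannot hold. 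This is a real simplification, and the pushforward identity $\V(f_\gen)=\pi_*\calA+2\sum_j\ell_j$ underlying it is insensitive to which exceptional multiplicities $D$ carries, so no case distinction is needed.

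There is one genuine gap: you justify $a_d\neq 0$ by saying $f_\gen$ is generic, but the hyperplane section produced by \Cref{lem:divisors_Di_in_case_2} is a \emph{specific} tangent hyperplane determined by the general points $x_1,\ldots,x_k$ (via Terracini's tangency curve), not a generic member of the linear system; there is no a priori reason its last coefficient is nonzero. Fortunately the repair is short. Let $r=\max\{i:a_i\neq 0\}$. Since $f_i=x^{d-i}\prod_{j<i}(z+jy)$, the power $x^{d-r}$ divides $f_\gen$ exactly, and since no $g_j$ is divisible by $x$, we get $x^{d-r}\mid h$; this forces $d-2k=\deg h\ge d-r$, i.e.\ $r\ge 2k$ (otherwise $h=0$, a contradiction). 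Writing $h=x^{d-r}h'$ with $h'(0,y,z)\neq 0$ and restricting $f_\gen/x^{d-r}=h'\prod_j g_j^2$ to $x=0$ yields
\[
a_r\prod_{i=0}^{r-1}(z+iy)\;=\;h'(0,y,z)\prod_{j=1}^k(\beta_j y+\gamma_j z)^2,
\]
whose left side is nonzero and squarefree of degree $r\ge 2k\ge 4$ while the right side is divisible by the square of a nonzero linear form. With this patch your argument closes, and the proof is correct.
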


\begin{proof}
We will use the same proof strategy as for the inverse Gaussian distribution (\Cref{thm:IG_nondef}). Fix $d$, and assume that $\M_d^\Gamma$ is $k$-defective. Then case (2) of \Cref{thm:terracini} must apply, and so by \Cref{lem:divisors_Di_in_case_2}, we have an expression for the class $H$ of the pullback of a general hyperplane section $\mathcal{H}$ of $\M_d^\Gamma$ to $\calS_d$.

In particular, there exist linearly equivalent divisors $\calD_1,\ldots,\calD_k$ of $\calS_d$ with class
\[D=aL-\sum_{i=0}^{d-1}\sum_{j=1}^{d-i} b_{i,j}E_{i,j}\]
with coefficients $a>0$ and $b_{i,j}\geq 0$, such that   
$$A=H-2kD=(d-2ka)L-\sum_{i=0}^{d-1}\sum_{j=1}^{d-i} (1-b_{i,j})E_{i,j}$$ 
is the class of an effective divisor by \Cref{lem:strict_transforms_gamma}. As in the inverse Gaussian case, we proceed by casework on the value of $a$, which is the degree of the projections $\pi(\calD_i)$ to $\PP^2$. We will almost immediately be able to reduce to the $a=1$ case, and derive a contradiction there.

We have that $A\cdot L=d-2ka\geq 0$. 
Together with \Cref{lem:lower_bound_on_d_for_defective_surfaces}, we obtain the inequalities
\begin{equation}
    2ak \leq d \leq 3k+2.
\end{equation} 
This immediately allows us to rule out the case $a\geq 3$ (since that would give $6k\leq 3k+2$ which is impossible for $k\geq 2$). 

In the case $a=2$, we get $6k\leq 3k+2$ and the only possibility is $k=2$ and $d\leq 8$, which is covered by \Cref{lem:base_cases}.

Therefore, we are left to investigate the $a=1$ case. Here, we have
$D=L-\sum_{i=0}^{d-1}\sum_{j=1}^{d-i} b_{i,j}E_{i,j}$, and since $D$ is moving, at most one of the coefficients $b_{i,1}$ is 1, and the coefficients of the classes of the other exceptional divisors are 0. We will now investigate each of the possibilities, and see that each of them leads to a contradiction.

\smallskip
\noindent
\textbf{The case $b_{i,1}=1$ for some $i\in\{0,\ldots,d-1\}$:} This means that for each $\ell\in\{1,\ldots,k\}$, the curve $\calD_\ell$ is the strict transform of a line in $\PP^2$ passing through $P_i$, which means that it is given by a linear form $$g_\ell(x,y,z)=\alpha_\ell x+\beta_\ell (iy + z)$$
for some $(\alpha_\ell,\beta_\ell)\neq (0,0)$. Note that $g_\ell(0,1,-i)=0$. Hence, there is a linear combination $f_\gen=\sum_{j=0}^d a_jf_j$ with $(a_0,\ldots,a_d)\neq 0$, such that 
\begin{equation}\label{eq:F_divisible_by_linear_forms_gamma}    f_\gen=h(x,y,z)\prod_{\ell=1}^k(\alpha_\ell x+i\beta_\ell y +\beta_\ell z)^2
\end{equation} for a homogeneous polynomial $h(x,y,z)\neq 0$. From this we can derive a contradiction by a similar divisibility argument as we used for the inverse Gaussian case. 

The idea is as follows: We have that $f_\gen$ is a homogeneous polynomial of degree $d$, and that $(g_1\cdots g_k)^2$ is homogeneous of degree $2k$. Thus, $h$ is homogeneous of degree $d-2k\geq 0$. Note that $f_j$ only involves monomials of the form $x^{d-j}y^rz^s$, so no monomial appears in more than one of the $f_j$'s. Furthermore, recall that if $P_i$ is a root of a coordinate function $f_j$, then it is a simple root.

If $\alpha_\ell=0$ for some $\ell$, then $\alpha_\ell=0$ for all $\ell$, and so $f_\gen$ has a root at $P_i$ with multiplicity at least $2k$. But this is not possible, as this would imply that any coordinate functions $f_j$ appearing in $f_\gen$ also have $P_i$ as a root with multiplicity at least $2k>1$, contradicting the fact that the $P_i$'s are simple roots of the coordinate functions. Hence, we conclude that the monomial $x^{2k}$ appears as a monomial in $(g_1\cdots g_k)^2$. However, it only divides $f_0, \ldots, f_{d-2k}$ in the left hand side of \eqref{eq:F_divisible_by_linear_forms_gamma}. Thus, we can apply a similar argument to the one applied for the inverse Gaussian in the $a=2$ case, to conclude that $h(x,y,z)=0$, which is a contradiction.

\smallskip
\noindent
\textbf{The case $b_{i,1}=0$ for all $i\in\{0,\ldots,d-1\}$:} In this case, $A$ has the following form:
\[A=(d-2k)L-\sum_{i=0}^{d-1}\sum_{j=1}^{d-i} E_{i,j}.\]
Note that $d-2k>0$ since $\calA$ is effective. We will derive a contradiction by showing that $\calA$ has the irreducible effective divisors from \Cref{lem:strict_transforms_gamma} as components, and that after removing them in a certain order, we obtain a divisor that is not effective, thereby contradicting the effectiveness of $\calA$.

First, we have that
\[A\cdot \Big(L-\sum_{i=0}^{d-1}E_{i,1}\Big)=(d-2k)-d=-2k<0,\]
and so $L-\sum_{i=0}^{d-1}E_{i,1}$ is the class of a divisor that is a component of $\calA$. Let $A_1=A-(L-\sum_{i=0}^{d-1}E_{i,1})$ be the class of the divisor $\calA_1$ obtained by removing this component, so
\[A_1=(d-2k-1)L-\sum_{i=0}^{d-2}\sum_{j=2}^{d-i}E_{i,j}.\]
We have that $A_1\cdot (E_{i,1}-E_{i,2})=-1$ for $i=0,\ldots,d-2$, so we can remove each of the components corresponding to these classes $E_{i,1}-E_{i,2}$ to obtain a divisor $\mathcal{A}'_1$, with class in the Picard group
\begin{align*}
    A'_1&=A_1-(E_{0,1}-E_{0,2})-\cdots-(E_{d-2,1}-E_{d-2,2})\\
    &=(d-2k-1)L-\sum_{i=0}^{d-2}E_{i,1}-\sum_{i=0}^{d-3}\sum_{j=3}^{d-i}E_{i,j}.
\end{align*}
Then, 
\[A'_1\cdot \Big(L-\sum_{i=0}^{d-1}E_{i,1}\Big)=(d-2k-1)-(d-1)=-2k<0,\] 
and so we can once again remove $L-\sum_{i=0}^{d-1}E_{i,1}$ to obtain
\[A_2=A'_1-\Big(L-\sum_{i=0}^{d-1}E_{i,1}\Big)=(d-2k-2)L+E_{d-1,1}-\sum_{i=0}^{d-3}\sum_{j=3}^{d-i}E_{i,j}.\]
We have that $A_2\cdot (E_{i,2}-E_{i,3})=-1$ for $i=0,\ldots,d-3$, so we can remove each of these components to obtain
\begin{align*}
    A'_2&=A_2-(E_{0,2}-E_{0,3})-\cdots-(E_{d-3,2}-E_{d-3,3})\\
    &=(d-2k-2)L+E_{d-1,1}-\sum_{i=0}^{d-3}E_{i,2}-\sum_{i=0}^{d-4}\sum_{j=4}^{d-i}E_{i,j}.
\end{align*}
We then have that $A'_2\cdot(E_{i,1}-E_{i,2})=-1$ for $i=0,\ldots,d-3$, so we can also remove each of the components corresponding to these classes to obtain
\begin{align*}
    A''_2&=B_2-(E_{0,1}-E_{0,2})-\cdots-(E_{d-3,1}-E_{d-3,2})\\
    &=(d-2k-2)L+E_{d-1,1}-\sum_{i=0}^{d-3}E_{i,1}-\sum_{i=0}^{d-4}\sum_{j=4}^{d-i}E_{i,j}.
\end{align*}
Finally, we have that $A''_2\cdot E_{d-1,1}=-1$, so removing the corresponding components, we obtain
\[A'''_2=(d-2k-2)L-\sum_{i=0}^{d-3}E_{i,1}-\sum_{i=4}^{d-4}\sum_{j=4}^{d-i}E_{i,j}.\]
Now, $A''_2\cdot (L-\sum_{i=0}^{d-1}E_{i,1})=(d-2k-2)-(d-2)=-2k<0$, so we can remove the corresponding component to obtain
\[A_3=(d-2k-3)L+E_{d-2,1}+E_{d-1,1}-\sum_{i=0}^{d-4}\sum_{j=4}^{d-i}E_{i,j}.\]
We continue in this way, removing effective irreducible components. Eventually, we have removed $d-2k-1$ copies of $\mathcal{L}$, along with many other effective divisors, and have the divisor $\calA_{d-2k-1}$ with class
\begin{align*}
    A_{d-2k-1}&=(d-2k-(d-2k-1))L+\sum_{i=d-(d-2k-2)}^{d-1}E_{i,1}-\sum_{i=0}^{d-(d-2k-1)-1}\sum_{j=d-2k}^{d-i}E_{i,j}\\
    &=L+\sum_{i=2k+2}^{d-1}E_{i,1}-\sum_{i=0}^{2k}\sum_{j=d-2k}^{d-i}E_{i,j}.
\end{align*}
Similarly, the divisors with the following classes are components of $\calA_{d-2k-1}$, and they can be removed in the listed order:
\begin{align*}
    &E_{0,d-2k-1}-E_{0,d-2k},\ldots,E_{2k,d-2k-1}-E_{2k,d-2k},\\
    &E_{0,d-2k-2}-E_{0,d-2k-1},\ldots,E_{2k,d-2k-2}-E_{2k,d-2k-1},\\
    &\vdots\\
    &E_{0,1}-E_{0,2},\ldots,E_{2k,1}-E_{2k,1},\\
    &E_{2k+1,1},\ldots,E_{d-1,1}.
\end{align*}
If $d-2k=1$, then we are only removing the last line of divisors. We then obtain the divisor $\mathcal{A}'_{d-2k-1}$ with class
\[A'_{d-2k-1}=L-\sum_{i=0}^{2k}E_{i,1}-\sum_{i=0}^{2k-1}\sum_{j=d-2k+1}^{d-i}E_{i,j}.\]
Intersecting this with the strict transform of $\V(x)$, we see that
\[A'_{d-2k-1}\cdot \Big(L-\sum_{i=0}^{d-1}E_{i,1}\Big)=1-(2k+1)=-2k<0,\]
so the strict transform of $\mathcal{V}(x)$ is still a component, and we remove it to get the divisor $\mathcal{A}_{d-2k}$ with class
\[A_{d-2k}=A'_{d-2k-1}-\Big(L-\sum_{i=0}^{d-1}E_{i,1}\Big)=\sum_{i=2k+1}^{d-1}E_{i,1}-\sum_{i=0}^{2k-1}\sum_{j=d-2k+1}^{d-i}E_{i,j}.\]
But $\mathcal{A}_{d-2k}$ is not effective: the divisors whose classes have negative coefficients do not lie over those with positive coefficients. We have thus reached our contradiction.
\end{proof}

\section{Rational identifiability}\label{sec:rational_identifiability}

In the previous sections we have seen that the parameters of $k$-mixtures of the inverse Gaussian or gamma distribution are algebraically identifiable from the first $3k-1$ moments. A natural question is how many further moments we need in order to have \textit{rational identifiability}, in the sense that for generic sample moments for which there is a solution to the moment equations, the solution is unique up to the label swapping symmetry. In the language of algebraic geometry, this corresponds to the problem of \emph{$k$-identifiability}: Given a $k$, for what $d$ does it hold that a generic point of $\Sec_k(\M_d)$ lies on a unique $k$-secant? 

Based on numerical experiments, we conjecture that $d\geq 3k$ suffices, but in what follows we will instead prove the more modest claim that $d\geq 3k+2$ suffices. Our proof strategy will be the same as the one used in \cite[Section~3]{LAR25} to prove the analogous statement for the Gaussian distribution, namely, to use the following sufficient conditions for $k$-identifiability.

{\samepage
\begin{theorem}[{\cite[Theorem~1.5]{MM24}}]
    Let $X\subseteq\PP^d$ be an irreducible nondegenerate variety, and let $k\geq 2$. Then a generic point of $\Sec_k(X)$ lies on a unique $k$-secant if the following conditions are satisfied:
    \begin{enumerate}
        \item $(k+1)\dim(X)+k\leq d$
        \item $X$ is $(k+1)$-nondefective
        \item The Gauss map of $X$ is nondegenerate.
    \end{enumerate}
\end{theorem}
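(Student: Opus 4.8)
The final statement is the Massarenti--Mella criterion itself, so I will sketch a proof of the criterion rather than apply it. The statement asserts that, under (1)--(3), a generic point of $\Sec_k(X)$ lies on a unique $k$-secant, which is equivalent to generic injectivity of the \emph{abstract secant map}. The plan is to reformulate the claim this way and then induct on $k$ by tangential projection, the three hypotheses being calibrated precisely so as to descend through the induction. Let $\mathrm{Ab}_k(X)\subseteq X^k\times\PP^d$ be the closure of the locus of tuples $(p_1,\dots,p_k,z)$ with $z\in\langle p_1,\dots,p_k\rangle$, and let $\pi_k\from\mathrm{Ab}_k(X)\to\Sec_k(X)$ be the second projection. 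Since $\dim\mathrm{Ab}_k(X)=k\dim X+(k-1)$, conditions (1) and (2) (the latter, with (1), also yielding $k$-nondefectivity) guarantee that $\pi_k$ is dominant and generically finite, so the theorem is equivalent to $\deg\pi_k=1$, i.e.\ that the generic fiber is a single reduced point. I would prove this by induction on $k$, the base case $k=1$ being trivial, since $\Sec_1(X)=X$ and a generic point is its own unique decomposition.

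For the inductive step, fix a generic point $p\in X$, write $n=\dim X$, and let $\tau_p\from\PP^d\dashrightarrow\PP^{d-n-1}$ be the projection away from the embedded tangent space $T_pX$, with $X_p=\overline{\tau_p(X)}$. By Terracini's lemma, the tangent space to $\Sec_m(X_p)$ at a generic point is the $\tau_p$-image of the tangent space to $\Sec_{m+1}(X)$ at a generic point, so condition (2) for $X$ descends to $k$-nondefectivity of $X_p$; in particular $\dim X_p=n$. The dimension bound descends exactly: condition (1), $(k+1)n+k\le d$, is literally condition (1) for the pair $(X_p,k-1)$, namely $k\dim X_p+(k-1)\le d-n-1$. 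A generic $z\in\Sec_k(X)$ lying on a secant through $p$ projects to a generic $\tau_p(z)\in\Sec_{k-1}(X_p)$, and decompositions of $z$ containing $p$ correspond to $(k-1)$-decompositions of $\tau_p(z)$. Hence, provided $\tau_p$ is birational onto $X_p$ and $X_p$ again has a nondegenerate Gauss map, the inductive hypothesis supplies a unique $(k-1)$-decomposition downstairs, which lifts uniquely upstairs, so every decomposition of $z$ through $p$ is determined by its remaining points.

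The crux, and the main obstacle, is extracting from condition (3) two facts: that $\tau_p$ is birational onto its image, and that $X_p$ inherits a nondegenerate Gauss map. For the first, the generic fiber of $\tau_p$ over $\tau_p(q)$ is $X\cap\langle T_pX,q\rangle$, the entry locus of $X$ relative to the generic tangent space; the essential input of \cite{MM24} is that this fiber reduces to a single point exactly when the Gauss map is nondegenerate, a positive-dimensional fiber producing a positive-dimensional family of tangent spaces inside a fixed hyperplane and thereby contracting a subvariety under the Gauss map. The second fact, stability of Gauss nondegeneracy under generic tangential projection, is the most delicate point, since projection can a priori degenerate the Gauss map; establishing it requires carefully comparing the tangent spaces of $X_p$ with those of $X$ along $\tau_p$.

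Finally, to upgrade ``all decompositions through a fixed generic $p$ agree'' to literal uniqueness of the $k$-secant, I would argue by monodromy and counting: $\deg\pi_k$ equals the number of decompositions of a generic point, the induction shows that fixing any one of the $k$ points pins down the other $k-1$, and irreducibility of $\mathrm{Ab}_k(X)$ then forces $\deg\pi_k=1$. Throughout, characteristic zero is used in two indispensable places: it makes the Gauss map of a non-linear variety generically finite with reduced image, and it licenses the generic-smoothness input behind the Terracini computations.
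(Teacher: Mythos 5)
First, a point of order: the paper does not prove this statement. It is quoted verbatim from \cite[Theorem~1.5]{MM24} and used as a black box in the section on rational identifiability; the only ``proof'' in the paper is the citation. So there is no internal argument to compare yours against, and what you have written is an attempt to reconstruct the proof of an external result.

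As a reconstruction, your overall architecture --- reduce to generic injectivity of the abstract secant map, induct on $k$ via projection from a general tangent space, and check that conditions (1) and (2) descend through the projection --- is the right skeleton and matches the tangential-projection strategy underlying \cite{CM23} and \cite{MM24}. But the two facts you isolate as ``the crux,'' namely that the general tangential projection $\tau_p$ is birational onto $X_p$ and that $X_p$ again has a nondegenerate Gauss map, are precisely the hard content of the theorem, and your treatment of the first contains a genuine gap. You argue that a positive-dimensional general fiber of $\tau_p$ yields a positive-dimensional family of points whose tangent spaces lie inside a fixed linear space, ``thereby contracting a subvariety under the Gauss map.'' That implication is false as stated: a family of tangent spaces contained in a common linear space need not be a constant family, so nothing is contracted by the Gauss map. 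Controlling the fibers of general tangential projections is essentially the territory of Bronowski's conjecture, and the point of \cite{MM24} is that the Gauss-map hypothesis must be combined with the $(k+1)$-nondefectivity hypothesis and a careful analysis of tangential contact loci (using, for instance, the linearity of general Gauss fibers in characteristic zero) before any birationality can be extracted. The second fact, stability of Gauss nondegeneracy under tangential projection, you correctly flag as delicate but then leave unproved; it cannot be waved through either. So while your plan identifies the right pressure points, the proof as written does not close, and for the purposes of this paper the correct move is the one the authors make: cite \cite{MM24}.
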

}

\begin{theorem}\label{thm:rational_identifiability}
For $k$-mixtures of the inverse Gaussian or the gamma distribution, we have rational identifiability from the first $3k+2$ moments.
\end{theorem}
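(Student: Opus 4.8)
The plan is to apply the Massarenti--Mella criterion \cite[Theorem~1.5]{MM24} stated above to $X=\M_{3k+2}$, where $\M_d$ denotes either $\M_d^\IG$ or $\M_d^\Gamma$. Since both distributions have $n=2$ parameters, $\dim(X)=2$, and $X$ is irreducible and nondegenerate (the moments $1,m_1,\ldots,m_d$ are linearly independent functions of the parameters). It then remains to verify the three hypotheses with $d=3k+2$. Condition~(1) holds with equality, since $(k+1)\dim(X)+k=2(k+1)+k=3k+2=d$. Condition~(2) requires $X$ to be $(k+1)$-nondefective; as $k\geq 2$ gives $k+1\geq 2$ and $d=3k+2\geq 8\geq 2$, this is immediate from \Cref{thm:IG_nondef} for the inverse Gaussian and \Cref{thm:gamma_nondef} for the gamma distribution.

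The substantive point is condition~(3): the Gauss map $\gamma\from X\dashrightarrow\mathbb{G}(2,d)$, $x\mapsto\mathbb{T}_xX$, must be nondegenerate, i.e.\ generically finite onto its image. Over $\CC$, a classical result (going back to Griffiths and Harris) states that an irreducible nondegenerate projective surface has degenerate Gauss map precisely when it is \emph{developable}, namely a cone over a curve or the tangent developable of a space curve, so it suffices to exclude both. The tangent developable is ruled out immediately: such a surface is singular along its entire cuspidal edge, an irreducible curve that is not a line, whereas by \Cref{thm:IG_results_from_previous_paper} and \Cref{thm:gamma_results_from_old_paper} the singular locus of $\M_d$ consists only of a line together with a point (inverse Gaussian), respectively of two points (gamma).

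Excluding the cone is the main obstacle, since it is obstructed neither by the singular locus (a cone over a smooth curve is singular only at its vertex) nor by the degree. Here I would argue directly from the parametrization and its resolution $\tilde{\phi}\from\calS_d\to\M_d$: were $\M_d$ a cone with vertex $v$, projection away from $v$ would collapse $\phi\from\PP^2\dashrightarrow\M_d$ onto a curve, so the ruling would pull back to a moving family of curves $\{\mathcal{C}_t\}$ on $\calS_d$ with $H\cdot\mathcal{C}_t=1$, their images being the ruling lines. The cleanest uniform route is instead to compute the second fundamental form of $X$ at a general point from $\phi$ and show it has trivial relative nullity; in local coordinates $u,v$ this reduces to checking that the images of $\phi_{uu},\phi_{uv},\phi_{vv}$ in the normal space admit no common degenerate tangent direction. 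The genuine difficulty is that this must be established simultaneously for all $d=3k+2$, so a finite computation will not suffice; I expect it to follow from the rigid structure of the coordinate functions---the Bessel recursion \eqref{eq:parametrization_inverse_gaussian} in the inverse Gaussian case and the product form $f_r=x^{d-r}\prod_{i=0}^{r-1}(z+iy)$ in the gamma case---which forces the image under $\phi$ of a line to have degree growing with $d$, incompatible with $\M_d$ being swept out by a one-parameter family of lines. Once condition~(3) is verified, the criterion yields rational identifiability from the first $3k+2$ moments.
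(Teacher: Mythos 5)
Your handling of conditions (1) and (2), and your exclusion of the tangent developable via the structure of the singular locus, match the paper's argument. The gap is the cone case, which you correctly isolate as the remaining obstacle but then do not close: you offer two sketches (pulling the ruling back to a moving family of curves with $H\cdot\mathcal{C}_t=1$, or computing the second fundamental form at a general point) and conclude with ``I expect it to follow from the rigid structure of the coordinate functions.'' Neither sketch is carried out, and the second-fundamental-form route would require a uniform computation for all $d=3k+2$ that you acknowledge you cannot reduce to a finite check. As written, condition (3) is not verified, so the proof is incomplete.

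What you are missing is that no new work is needed here. Case (2) of Terracini's classification (\Cref{thm:terracini}) is precisely the statement that $\M_d$ is contained in a cone over a curve, and the entire content of \Cref{thm:IG_nondef,thm:gamma_nondef} --- the intersection-theoretic casework on the degree $a$ of the divisors $\calD_i$ --- is a proof that this leads to a contradiction; the paper simply cites those proofs to conclude that $\M_d$ cannot be a cone over a curve. Alternatively, one can argue from the statements alone: if $X\subseteq\PP^d$ is a nondegenerate cone over a curve $C$ with vertex $v$, then every secant line of $X$ lies in the plane spanned by $v$ and a secant line of $C$, so $\Sec_2(X)$ is contained in the cone over $\Sec_2(C)$ with vertex $v$ and has dimension at most $4<5=\min\{2\cdot 2+1,d\}$ once $d\geq 5$; hence $X$ is $2$-defective. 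Since $\M_{3k+2}$ is $2$-nondefective by \Cref{thm:IG_nondef,thm:gamma_nondef} and $3k+2\geq 8$, it is not a cone. Either way, the cone exclusion is an immediate consequence of the nondefectivity results you already invoked for condition (2), not a fresh differential-geometric computation.
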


\begin{proof}
Let $d=3k+2$. It is immediate from the results from \cite{HSY24} that $\M_{d}$ is irreducible and nondegenerate, and that condition (1) is satisfied, whereas condition (2) follows from \Cref{thm:IG_nondef,thm:gamma_nondef}. 

To prove (3), we use a classical result about Gauss maps of surfaces (see, e.g., \cite[Theorem~4.3.6]{IL03}), which says that if the Gauss map of $\M_{d}$ is degenerate, then $\M_{d}$ is either a cone over a curve, or the tangential variety of a curve. It follows from the proofs of \Cref{thm:IG_nondef,thm:gamma_nondef} that $\M_d$ cannot be a cone over a curve. Assume now for a contradiction that $\M_{d}$ is equal to the tangential variety $\tau(C)$ for some curve $C$ in $\PP^d$. Since $\M_{d}$ is not contained in a plane, $C$ is not a plane curve. Hence, $C$ is contained in the singular locus of $\M_d$. 
(This follows from the general fact that a nonplanar curve is contained in the singular locus of its tangential variety; see, e.g., \cite{Piene81} for the case of space curve, from which the general case readily follows.) 
For the gamma distribution, the singular locus is zero-dimensional by \Cref{thm:gamma_results_from_old_paper}, so this is impossible. In the inverse Gaussian case, the singular locus is a line and a point by \Cref{thm:IG_results_from_previous_paper}, so this would imply that $C$ is a line, which in turn would imply that $\tau(C)$ is a line, which contradicts $\tau(C)=\M_d$. 
\end{proof}

% Extra space between items in the bibliography
\newlength{\bibitemsep}\setlength{\bibitemsep}{.2\baselineskip plus .05\baselineskip minus .05\baselineskip}
\newlength{\bibparskip}\setlength{\bibparskip}{0pt}
\let\oldthebibliography\thebibliography
\renewcommand\thebibliography[1]{
  \oldthebibliography{#1}
  \setlength{\parskip}{\bibitemsep}
  \setlength{\itemsep}{\bibparskip}
}

% \bibliographystyle{alpha}
% \bibliography{ref}

\vspace{1em}

\noindent {\bf Authors' addresses:}

\noindent 
Oskar Henriksson, University of Copenhagen \hfill{\tt oskar.henriksson@math.ku.dk}\\
Kristian Ranestad, University of Oslo \hfill {\tt ranestad@math.uio.no}\\
Lisa Seccia, University of Neuchâtel \hfill{\tt lisa.seccia@unine.ch}\\
Teresa Yu, University of Michigan \hfill {\tt twyu@umich.edu}

\end{document}